\newcounter{casenum}
\newcommand{\dbar}[1]{\bar{\bar{#1}}}
\newcommand{\dover}[1]{\overline{\overline{#1}}} 
\newcommand{\Tr}{\ensuremath{^{\mr{T}}}}
\newcommand{\mr}[1]{\ensuremath{\mathrm{#1}}}
\newcommand{\mat}[1]{\ensuremath{\mathsf{#1}}}
\newcommand{\fnc}[1]{\ensuremath{\mathit{#1}}}
\newcommand{\bfnc}[1]{\ensuremath{\bm{\mathit{#1}}}}
\newcommand{\xm}[0]{\ensuremath{x_{m}}}
\newcommand{\xil}[0]{\ensuremath{\xi_{l}}}
\newcommand{\Ohat}[0]{\ensuremath{\hat{\Omega}}}
\newcommand{\U}[0]{\ensuremath{\bfnc{U}}}
\newcommand{\Fxm}[0]{\ensuremath{\bfnc{F}}_{\xm}}
\newcommand{\Vone}[0]{\ensuremath{\fnc{V}_{1}}}
\newcommand{\Vtwo}[0]{\ensuremath{\fnc{V}_{2}}}
\newcommand{\Vthree}[0]{\ensuremath{\fnc{V}_{3}}}
\newcommand{\Vm}[0]{\ensuremath{\fnc{V}_{m}}}
\newcommand{\E}[0]{\ensuremath{\fnc{E}}}
\newcommand{\Cij}[2]{\ensuremath{\mat{C}_{#1,#2}}}
\newcommand{\Fxmv}[0]{\ensuremath{\bm{\fnc{F}}_{\xm}^{(v)}}}
\theoremstyle{plain}
\newtheorem{theorem}{Theorem}
\newtheorem{corollary}{Corollary}[theorem]
\newtheorem{lemma}[theorem]{Lemma}
\theoremstyle{definition}
\newtheorem{remark}{Remark}
\begin{document}

\begin{frontmatter}



\title{First-order positivity-preserving entropy stable spectral collocation scheme for the 3-D compressible Navier-Stokes equations}


\author{Johnathon Upperman and Nail K. Yamaleev\footnote{Corresponding author. Department of Mathematics and Statistics, Tel.: +1 757 683 3423. {\it E-mail address:} nyamalee@odu.edu}}

\address{Old Dominion University, Norfolk, VA 23529, USA}

\begin{abstract}
In this paper, we extend the positivity--preserving, entropy stable first--order finite volume-type  scheme developed for the one-dimensional compressible Navier-Stokes equations in \cite{UY_1Dlow} to three spatial dimensions. The new first-order scheme is provably entropy stable, design--order accurate for smooth solutions, and guarantees the pointwise positivity of thermodynamic variables for 3-D compressible viscous flows. 
Similar to the 1-D counterpart, the proposed scheme for the 3-D Navier-Stokes equations is discretized on Legendre-Gauss-Lobatto grids used for high-order  spectral collocation methods. The positivity of density is achieved by adding an artificial dissipation in the form of the first-order Brenner-Navier-Stokes diffusion operator. 
Another distinctive feature of the proposed scheme is that the Navier--Stokes viscous terms are discretized by high--order spectral collocation summation-by-parts operators.
To eliminate time step stiffness caused by the high-order approximation of the viscous terms, the velocity and temperature limiters developed for the 1-D compressible Navier-Stokes equations in \cite{UY_1Dlow} are generalized to three spatial dimensions. These limiters bound the magnitude of velocity and temperature gradients and preserve the entropy stability and positivity properties of the baseline scheme.
Numerical results are presented to demonstrate design-order accuracy and positivity-preserving properties of the new first-order scheme for 2-D and 3-D inviscid and viscous flows with strong shocks and contact discontinuities.
\end{abstract}

\begin{keyword}
summation-by-parts (SBP) operators, entropy stability, positivity-preserving schemes, Brenner regularization, artificial dissipation, the Navier-Stokes equations.
\end{keyword}

\end{frontmatter}


\section{Introduction}
We have recently developed a novel first--order entropy stable finite volume scheme that provides pointwise positivity of thermodynamic variables for the 1-D compressible Navier-Stokes equations on Legendre-Gauss-Lobatto (LGL) grids used for high-order spectral collocation methods \cite{UY_1Dlow}. 
The positivity preservation and entropy stability properties are achieved by introducing the first-order artificial dissipation operator that mimics the corresponding diffusion operator of the Brenner-Navier-Stokes equations \cite{Brenner1}. It has been proven that the proposed first-order scheme is conservative, entropy stable, and positivity preserving for the 1-D compressible Euler and Navier-Stokes equations. This positivity-preserving methodology has recently been extended to entropy stable spectral collocation schemes of arbitrary order of accuracy for the 1-D Navier-Stokes equations in the companion paper \cite{UY_1Dhigh}. Herein, we generalize and extend this 1-D positivity-preserving entropy stable finite volume scheme developed in \cite{UY_1Dlow} to the three-dimensional compressible Navier-Stokes equations on fully unstructured static hexahedral grids. 

There are very few papers available in the literature on positivity-preserving methods for the compressible Navier-Stokes equations especially in three spatial dimensions. A first-order positivity-preserving finite difference scheme for the 3-D compressible Navier-Stokes equations on Cartesian uniform grids has been developed in \cite{Svard}. The positivity proof in this paper is based on some memetic properties of the 1st-order finite difference operators on uniform grids, which are not available for curvilinear or unstructured grids. 
In \cite{GHKL}, an implicit first-order positivity-preserving scheme is constructed for the compressible Navier-Stokes equations on staggered grids. This scheme is unconditionally stable and solves the internal energy equation instead of the equation for conservation of total energy. 
Another positivity-preserving scheme for the compressible Navier-Stokes equations has been proposed in \cite{GMPT}. This approach relies on the invariant domain preserving approximation of the Euler equations and the Strang's operator splitting technique that is at most 2nd-order accurate.  Note that no entropy stability proof is currently available for the numerical schemes developed in \cite{GHKL, GMPT}. 
Recently, a high-order positivity-preserving discontinuous Galerkin (DG) scheme for the compressible Navier-Stokes equations is presented in \cite{Zhang}. This explicit in time method provides only so-called weak positivity of density and pressure and imposes very severe constraints on the time step. Note that the actual time step constraint is much stiffer, because the lower bound on the artificial viscosity coefficient, which is required for density and pressure positivity, may grow dramatically, as the velocity gradients increase. 

The main objective of the present paper is to 
construct a first-order positivity-preserving entropy stable scheme defined on LGL grids which are used for high-order spectral collocation methods (e.g., \cite{CFNF, CFNPSY}). To provide the positivity of thermodynamic variables, we construct new first-order artificial dissipation operators that are based on the Brenner regularization of the compressible Navier-Stokes equations \cite{UY_1Dlow, UY}. In contrast to the existing positivity-preserving schemes that rely on monotonicity properties of the Rusanov-type dissipation, the proposed method minimizes the amount of artificial dissipation required for pointwise positivity of density and temperature and uses novel velocity and temperature limiters to eliminate the time step stiffness for viscous flows with strong shock waves and contact discontinuities.

\section{3-D regularized  Navier-Stokes equations}
\label{BNS}
We consider the 3-D compressible Navier-Stokes equations in curvilinear coordinates $(\xi_1,\xi_2,\xi_3)$, which are  written in conservation law form as follows:
\begin{equation}
\label{eq:NS_Curvilinear}
\begin{split}
&\frac{\partial J\U}{\partial t} + \sum\limits_{m, l=1}^{3}\frac{\partial}{\partial \xi_l}\left(\bfnc{F}_{\xil}-\bfnc{F}_{\xil}^{(v)}\right)= 0,\\
&\bfnc{F}_{\xil}\equiv \sum\limits_{m=1}^{3}\fnc{J}\frac{\partial \xi_l}{\partial \xm}\Fxm,\quad
\bfnc{F}_{\xil}^{(v)}\equiv\sum\limits_{m=1}^{3}\fnc{J}\frac{\partial \xi_l}{\partial \xm}\Fxm^{(v)},
\end{split}
\end{equation}
where $\U = \left[\rho,\rho\Vone,\rho\Vtwo,\rho\Vthree,\rho\E\right]\Tr$ is a vector of the conservative variables  and
$\Fxm$, and $\Fxmv$ are the inviscid and viscous fluxes associated with the Cartesian coordinates $(x_1, x_2, x_3)$, which are given by
\begin{equation*}
\Fxm = \left[\rho\Vm,\rho\Vm\Vone+\delta_{m,1}\fnc{P},\rho\Vm\Vtwo+\delta_{m,2}\fnc{P},\rho\Vm\Vthree+\delta_{m,3}\fnc{P},\rho\Vm\fnc{H}\right]\Tr, 
\end{equation*}
\begin{equation}\label{eq:Fv}
\Fxmv=\left[0,\tau_{1,m},\tau_{2,m},\tau_{3,m},\sum\limits_{i=1}^{3}\tau_{i,m}\fnc{V}_{i}-\kappa\frac{\partial \fnc{T}}{\partial\xm}\right]\Tr.
\end{equation} 
The viscous stresses in Eq.~(\ref{eq:Fv}) are defined as follows:
\begin{equation}\label{eq:tau}
\tau_{i,j} = \mu\left(\frac{\partial\fnc{V}_{i}}{\partial x_{j}}+\frac{\partial\fnc{V}_{j}}{\partial x_{i}}
-\delta_{i,j}\frac{2}{3}\sum\limits_{n=1}^{3}\frac{\partial\fnc{V}_{n}}{\partial x_{n}}\right),
\end{equation}
where $\mu(T)$ is the dynamic viscosity, $\kappa(T)$ is the thermal conductivity, and $\delta_{i,j}$ is the Kronecker delta.
To close the Navier-Stokes equations, the following constituent relations are used:
\begin{equation*}
\fnc{H} = c_{\fnc{P}}\fnc{T}+\frac{1}{2}\bfnc{V}\Tr\bfnc{V},\quad \fnc{P} = \rho R_g \fnc{T},\quad R_g = \frac{R_{u}}{M_{w}},
\end{equation*}
where $\fnc{T}$ is the temperature, $R_{u}$ is the universal gas constant, $M_{w}$ is the molecular weight of the gas, 
and $c_{\fnc{P}}$ is the specific heat capacity at constant pressure. 
Equation~(\ref{eq:NS_Curvilinear}) is subject to boundary conditions that are assumed to satisfy the entropy inequality. 
Note that only static curvilinear unstructured grids are considered in the present analysis.

Since the Navier-Stokes equations have no inherent mechanism for producing admissible solutions with positive thermodynamic variables, we regularize Eq.~(\ref{eq:NS_Curvilinear}) by adding artificial dissipation 
in the form of the diffusion operator of the Brenner-Navier-Stokes equations  introduced in \cite{Brenner1}. 
The regularized Navier-Stokes equations in curvilinear coordinates are given by
\begin{equation}
\label{eq:regNS_Curvilinear}
\frac{\partial J\U}{\partial t} 
+ 
\sum\limits_{m, l=1}^{3}
\frac{\partial}{\partial \xi_l}\left(\bfnc{F}_{\xil}
-
\bfnc{F}_{\xil}^{(v)}
-
\bm{\fnc{F}}_{\xil}^{(AD)}
\right)= 0
,\quad
\bfnc{F}^{(AD)}_{\xil}\equiv \sum\limits_{m=1}^{3}\fnc{J}\frac{\partial \xi_l}{\partial \xm}{\bm F}_{x_m}^{(AD)}
\end{equation}
$$
{\bm F}_{x_m}^{(AD)} =\left[c_{\rho}\mu^{AD}\frac{\partial \rho}{\partial x_m},\tau^{(AD)}_{1,m},\tau^{(AD)}_{2,m},\tau^{(AD)}_{3,m},\sum\limits_{i=1}^{3}\tau^{(AD)}_{i,m}\fnc{V}_{i}-c_T\mu^{AD}\frac{\partial \fnc{T}}{\partial\xm}\right]\Tr,
$$ 
$$
\tau^{(AD)}_{i,j} = \mu^{AD}\left(\frac{\partial\fnc{V}_{i}}{\partial x_{j}}+\frac{\partial\fnc{V}_{j}}{\partial x_{i}}
-\delta_{i,j}\frac{2}{3}\sum\limits_{n=1}^{3}\frac{\partial\fnc{V}_{n}}{\partial x_{n}}\right) + c_{\rho}\mu^{AD}v_i\frac{\partial \rho}{\partial x_j},
$$
where $\mu^{AD}$ is an artificial viscosity and $c_T$ and $c_\rho$ are  positive tunable coefficients.  
Equations (\ref{eq:NS_Curvilinear}) and (\ref{eq:regNS_Curvilinear}) are derived by using the following identies:
\begin{equation}
\label{eq:GCL_C}
\begin{array}{l}
\sum\limits_{ l=1}^{3}\frac{\partial}{\partial \xi_l}\left(J\frac{\partial \xi_l}{\partial \xm}\right) = 0, \quad m=1, 2, 3,
\end{array}
\end{equation}
which are called the geometric conservation laws (GCL) \cite{TLGCL}. Though, the GCL equations \eqref{eq:GCL_C} are satisfied exactly at the continuous level,
this is not the case at the discrete level \cite{TLGCL}. A discussion on how the corresponding metric 
coefficients should be discretized to satisfy the GCL equation is presented elsewhere (e.g., see \citep{TLGCL,GCLSS2019}).

Both the Navier-Stokes and regularized Navier-Stokes equations are equipped with the same scalar entropy function $\mathcal{S}= -\rho s$ and the corresponding entropy flux $\mathcal{F}=-\rho s \bm{V}$,
where $s$ is the thermodynamic entropy and $\bm{V}$ is the velocity vector. The mathematical entropy, $\mathcal{S}(\U)$, is convex and its Hessian matrix, $\frac{\partial^2 \mathcal{S}}{\partial U^2}$, is positive definite provided that $\rho>0$ and $T>0$ $\forall \bm{x} \in \Omega$, thus yielding a one-to-one mapping from the conservative to entropy variables, which are defined by $\bm{W}\Tr \equiv \frac{\partial \mathcal{S}}{\partial \U}$.
For entropy stable boundary conditions, the following entropy inequality holds for both Eqs.~(\ref{eq:NS_Curvilinear}) and (\ref{eq:regNS_Curvilinear}):
\begin{equation}\label{eq:Eineq.3}
\int_{\Ohat} \frac{\partial (J \mathcal{S})}{\partial \tau}\mr{d}\Ohat =  \frac{d}{d \tau} \int_{\Ohat} J \mathcal{S}\mr{d}\Ohat  \leq 0.
\end{equation} 
Note that the entropy inequality \eqref{eq:Eineq.3} is only a necessary condition,
which is not by itself sufficient to guarantee the convergence to a physically relevant 
weak solution of the Navier-Stokes equations.
In contrast to the conventional Navier-Stokes equations (\ref{eq:NS_Curvilinear}),
the regularized Navier-Stokes equations  \eqref{eq:regNS_Curvilinear} provide global-in-time positivity of thermodynamic variables \cite{FV}. Herein, we develop a  new numerical scheme that replicates this positivity property of the regularized Navier-Stokes equations \eqref{eq:regNS_Curvilinear} at the discrete level.

\section{Discrete operators}
\label{SBP}
Similar to the first-order scheme developed in one spatial dimension in \cite{UY_1Dlow}, the proposed first-order scheme for the 3-D Navier-Stokes equations is discretized on the same Legendre-Gauss-Lobatto (LGL) points used for high-order spectral collocation operators. 
In the one-dimensional setting, the physical domain is divided into $K$ non-overlapping elements $[x_1^k, x_{N_p}^k
]$ with $K+1$ nonuniformly distributed points, so that $x_1^k=x_{N_p}^{(k-1)}$.
The discrete solution inside each element  is defined on 
the LGL points, ${\bf x}_k = \left[x_1^k, \dots, x_{N_p}^k \right]^\top$, associated with the Lagrange polynomial basis of degree $p=N_p-1$. These local points ${\bf x}_k$ are referred to as solution points. 

The derivatives of the viscous fluxes in \eqref{eq:regNS_Curvilinear} are discretized by high-order spectral collocation operators that satisfy the summation-by-parts (SBP) property \cite{CFNF,SN1}. This mimetic property is achieved 
by approximating the first derivative by using the following discrete operator, $D$:
\begin{equation}
\label{SCO4}
\begin{split}
&
D=\mathcal{P}^{-1} \mathcal{Q},
\\
&
\mathcal{P} = \mathcal{P}^\top,   \quad {\bf v}^\top \mathcal{P} {\bf v} > 0, \quad \forall {\bf v} \ne {\bf 0}, 
\\
&
\mathcal{Q}=B-\mathcal{Q}^\top, \quad B = {\rm diag}(-1,0,\dots,0,1) ,
\end{split}
\end{equation}
where $\mathcal{P}$ is a diagonal mass matrix and $\mathcal{Q}$ is a stiffness matrix.
Only diagonal-norm SBP operators are considered herein, which is critical for proving
the entropy inequality at the discrete level \cite{CFNF}. 

Along with the solution points, we also 
define a set of intermediate points $\bar{\bf x}_k =
\left[\bar{x}_0^k, \dots, \bar{x}_{N_p}^k \right]^\top$ prescribing bounding control volumes around 
each solution point. These points referred to as flux points form  
a complementary grid whose spacing is precisely equal to the diagonal elements of the positive
definite matrix $\mathcal{P}$ in Eq.~\eqref{SCO4}, i.e., $ \Delta \bar{\bf x} = \mathcal{P} {\bf 1}$,
where $ \bar{\bf x} = \left [\bar{x}_0, \dots, \bar{x}_{N_p} \right]^\top$ is a vector of flux points, 
${\bf 1}=[1, \dots, 1]^\top$, 
and $\Delta$ is an $N_p\times (N_p +1)$ matrix corresponding to the two-point backward
difference operator \cite{CFNF,YC3}.
As has been proven in \cite{FCNYS}, 
all discrete SBP derivative operators can be recast into the following telescopic flux form:
\begin{equation}
\label{TF2}
 \mathcal{P}^{-1} \mathcal{Q} {\bf f}  = \mathcal{P}^{-1} \Delta \bar{\bf f},
\end{equation}
where $\bar{\bf f}$ is a high-order flux vector defined at the flux points. Note that the
above telescopic flux form also satisfies the generalized SBP property \cite{FCNYS}.

For unstructured hexahedral grids, the above one-dimensional SBP operators defined on each grid element naturally extend to three spatial dimensions by using tensor product arithmetic.
The multidimensional tensor product operators are defined as follows:
\begin{equation}
\begin{aligned}
D_{\xi^1} &= \left( D_N \otimes I_N \otimes  I_N \otimes I_5 \right)
 , & 
\mathcal{P}_{\xi^1} &= \left( \mathcal{P}_N \otimes I_N \otimes  I_N  \otimes I_5 \right),
\\
\mathcal{P}_{\xi^1,\xi^2} &=   \left( \mathcal{P}_N \otimes \mathcal{P}_N  \otimes  I_N \otimes I_5 \right) 
, & 
\mathcal{P} &=   \left( \mathcal{P}_N \otimes \mathcal{P}_N  \otimes  \mathcal{P}_N \otimes I_5 \right),
\\
\widehat{\mathcal{P}} &=   \left( \mathcal{P}_N \otimes \mathcal{P}_N  \otimes  \mathcal{P}_N  \right) 
, & 
\mathcal{P}_{\perp,\xi^1} &=   \left( I_N \otimes \mathcal{P}_N  \otimes  \mathcal{P}_N \otimes I_5 \right),
\end{aligned}
\end{equation}
with similar definitions for other directions and operators $\mathcal{Q}_{\xi^i}$, $\Delta_{\xi^i}$ and $B_{\xi^i}$.
Also, the following notation is used hereafter: $\mathcal{P}_{ijk} = \mathcal{P}_{i,i} \mathcal{P}_{j,j} \mathcal{P}_{k,k}$ and $\mathcal{P}_{ij} = \mathcal{P}_{i,i} \mathcal{P}_{j,j}$ where $\mathcal{P}_{i,i}$ is the scalar $i$-th diagonal entry of $\mathcal{P}_N$.  

The metric coefficients are also discretized by using the high-order SBP operators (Eq.~(\ref{SCO4}))such that 
the GCL equations given by Eq.~\eqref{eq:GCL_C} are satisfied exactly at the discrete level \cite{TLGCL}.
The discrete approximation of the scalar metric coefficient $J\frac{\partial \xi^l}{\partial x^m}$ at the solution point $\vec{\xi}_{ijk}$ is denoted $\hat{\bf a}^l_m(\vec{\xi}_{ijk})$.  The block diagonal matrix $[\hat{a}^l_m]$ contains blocks with entries  $\hat{\bf a}^l_m(\vec{\xi}_{ijk}) I_{5 \times 5}$  where $I_{5 \times 5}$ is the identity matrix of size 5.  The specific formulas for $\hat{\bf a}^l_m(\vec{\xi}_{ijk})$ are recorded elsewhere (e.g., see \cite{TLGCL, GCLSS2019}).  Note that $\hat{\bf a}^l_m(\vec{\xi}_{ijk})$ is continuous at element interfaces and satisfies the following GCL equations exactly:
\begin{equation}
\label{DISC_GCL}
\sum\limits_{ l=1}^{3} D_{\xi^l}[\hat{a}^l_m] {\bf 1}_5 = {\bf 0}_5,   \quad m=1,2,3.
\end{equation}

\section{Artificial Viscosity}
\label{AV}  
The artificial viscosity coefficient, $\bfnc{\mu}^{AD}$, in Eq.~(\ref{eq:regNS_Curvilinear}) 
is constructed as a function of the entropy equation residual and consists of three major components: 1) entropy residual, 2)  sensor functions, and 3) upper bound of the artificial viscosity coefficient.  
In this section, we make use of the following globally defined parameters:
$\delta = \left(\frac{1}{K} \right)^{\frac{1}{d}}$ and $L^* = \left( \sum\limits_{ i=1}^{K} V_i \right)^{\frac{1}{d}}$,
where $d$ is the spatial dimensionality, $K$ is the total number of elements used, and $V_i$ is the volume on the $i$-th element.  

\subsection{Entropy residual}
\label{sec:entRes}
We directly generalize the 1-D entropy residual presented  in \cite{UY} to three spatial dimensions.
The finite element residual of the entropy equation on the $k$-th element is approximated as follows:
\begin{equation}
\label{EV3}
\begin{array}{ll}
{\bf R} 
& = \sum\limits_{ l=1}^{3} \left(
\mathcal{P}^{-1}_{\xi^l} \Delta_{\xi^l}\hat{\bar{{\bf f}}}_l 
- D_{\xi^l}\hat{{\bf f}}^{(v)}_l - \hat{{\bf g}}_l \right)  \\
& - \sum\limits_{ l=1}^{3} \left(- \widehat{D}_{\xi^l}\hat{{\bf F}}_l 
+ \widehat{D}_{\xi^l}\left({\bf w}^\top \hat{{\bf f}}^{(v)}_l \right)
- ({\bf\Theta}_{\xi^l})^\top \hat{{\bf f}}^{(v)}_l   \right).
\end{array}
\end{equation}
where $\hat{\bar{{\bf f}}}_l$, $\hat{{\bf f}}^{(v)}_l$, $\hat{{\bf F}}_l$, ${\bf\Theta}_{\xi^l}$, and $\hat{\bf g}_l$  are evaluated by using  the 1st-order discrete solution defined on the LGL solution points.
Note that no high-order discrete solution is required to compute the entropy residual given by Eq.~(\ref{EV3}).
We refer the reader to \cite{UY} which discusses the key advantages of approximating the entropy residual by Eq.~(\ref{EV3}). 

\subsection{Sensor functions}
\label{RBsensor}
We detect regions where the discrete solution is under-resolved and minimize the amount of artificial dissipation in regions where the solution is smooth by using the following sensor functions: 1) an entropy residual-based sensor, 2) a compression sensor, and 3) a pressure gradient sensor.
  
To construct the residual-based sensor, we first define an auxiliary pointwise sensor as follows:
\begin{equation}
{\bf r}(\vec{\xi}_{ijl}) = 
\frac{
\frac{{\bf R}(\vec{\xi}_{ijl})}{{\bf J}(\vec{\xi}_{ijl})} }
{ \max \left(\frac{{\bf R}(\vec{\xi}_{ijl})}{{\bf J}(\vec{\xi}_{ijl})},
\bfnc{\eta}(\vec{\xi}_{ijl})\right) },
\end{equation}
where 
\begin{equation}
\begin{array}{ll}
\bfnc{\eta}(\vec{\xi}_{ijl}) 
= 
&
\left[
\kappa \|\nabla_h{\bf \Theta}^5\| {\bf T}
+
\mu \sqrt{{\bf T}} \sqrt{\sum\limits^4_{n=2} \|\nabla_h{\bf \Theta}^n\|}
+ \| {\bf F}\| 
+
\bfnc{\rho} \delta {\bf c}
\right]_{\vec{\xi}_{ijl}} \times
\\
&
\left[
\frac{1}{\mathcal{P}_{i,i}}
+
\frac{1}{\mathcal{P}_{j,j}}
+
\frac{1}{\mathcal{P}_{l,l}}
\right]
\frac{2}{L^*},
\end{array}
\end{equation}
where $\|\cdot\|$ is the magnitude of a vector. 
In the above equation, 
$
\nabla_h{\bf \Theta}^m(\vec{\xi}_{ijk}) = \left[
\begin{array}{c c c}
{\bf \Theta}^m_{x^1}  & {\bf \Theta}^m_{x^2} & {\bf \Theta}^m_{x^3}
\end{array}
\right]^\top_{\vec{\xi}_{ijk}},
$
where
$
{\bf \Theta}_{x^j}^m(\vec{\xi}_{ijk})
$ is the $m$-th component of 
$
{\bf \Theta}_{x^j}
$
at $\vec{\xi}_{ijk}$ and 
$
{\bf \Theta}_{x^j}
$
is the $p^{\rm{th}}$-order approximation of the gradient of the entropy variables \cite{CFNPSY},
and $c$ is the speed of sound.
The entropy residual sensor for the $k$-th element, $Sn^k$, is then defined as follows:
\begin{equation}
\label{ENTROPYRESIDUALSENSOR} 
\begin{split}
Sn^k_0 
= 
\max({\bf r}^k)^{\max(1,\frac{p-1}{p-1.5})},
\quad
Sn^k 
= 
\left\{
\begin{array}{ll}
 Sn^k_0 , & {\rm if} \ Sn^k_0 \geq \max(0.2,\delta) \\
0,                                                                                                                                                                     & {\rm otherwise}.
\end{array}
\right.
\end{split}
\end{equation}

To identify regions where the amount of
artificial dissipation can be reduced without generating spurious oscillations,
we augment the entropy residual sensor with the compression and pressure gradient sensors, 
which are used only if the residual sensor $Sn^k > 0$ on the $k$-th element.

To minimize the amount of artificial dissipation near expansion waves, the compression sensor, $Cn^k$, on the $k$-th element is defined by using the integral of the divergence of the velocity filed over this element
\begin{equation}
\label{COMPRESSIONSENSOR}
\begin{array}{l}
Cn^k
=
(Cn^k_0)^b
\frac{
\arctan{\left[a (Cn^k_0 - Cn_*)\right]} + \frac{\pi}{2}}
{\arctan{\left[ a(1 - Cn_*)\right]} + \frac{\pi}{2}},
\quad
Cn^k_0 
= 
\max 
\left(
\frac{
-{\bf J}\widehat{\mathcal{P}}\left( \nabla_h \cdot \vec{\bfnc{V}} \right)}
{
{\bf J}\widehat{\mathcal{P}} \left| \left( \nabla_h \cdot \vec{\bfnc{V}} \right) \right|
+
\epsilon}
,
0
\right),
\end{array}
\end{equation}
where $\nabla_h \cdot \vec{\bfnc{V}}$ is the approximation of the divergence of the velocity vector, and
$b$, $Cn_*$, and $a$, are tunable parameters that are set to be $0.1$, $0.2$, and $50$ for all problems considered.

The pressure, $Pn^k$, sensor aims to reduce the amount of artificial dissipation near weak shock waves and 
is defined as follows:
\begin{equation}
\label{presSensor}
\begin{array}{ll}
Pn^k
&
=
\max \left( 0, 
\frac{Pn^k_0}{Pn^k_d}
 \right),
\\
Pn^k_0 
&
= 
- 
\sum^{N}_{i,j,k=1}
\mathcal{P}_{ijk}{\bf J}({\vec{\xi}_{ijk}})  \vec{\bfnc{V}}({\vec{\xi}_{ijk}}) 
\cdot
 (\nabla_h {\bf P})({\vec{\xi}_{ijk}}) ,
\\
Pn^k_d
&
=
\epsilon
+
\sum^{N}_{i,j,k=1}
\mathcal{P}_{ijk}{\bf J}({\vec{\xi}_{ijk}}) \| \vec{\bfnc{V}}({\vec{\xi}_{ijk}}) \|
\| (\nabla_h {\bf P})({\vec{\xi}_{ijk}}) \|.
\end{array}
\end{equation}   

\subsection{Local reference grid spacing}
\label{sec:locRefLenVisc}
On each $k$-th element where $Sn^k > 0$, we define a reference grid spacing, $h^k$, that is used to compute an upper bound of the artificial viscosity coefficient, $\mu^k_{\max}$. The technique presented in this section for calculating $h^k$ is used for all test problems considered in Section~\ref{results}.

First, we compute the following array of reference lengths, ${\bf L}^k$, defined at each solution point of the $k$-th element: 
\begin{equation}
\begin{aligned}
{\bf L}^k(\vec{\xi}_{ijl}) 
&=
2\left[  \prod_{m=1}^{3} \left\| D_{m}{\bf x} \right\|^{\bar{\bf E}_m} \right]_{\vec{\xi}_{ijl}},
\quad
\bar{\bf E}_m(\vec{\xi}_{ijl}) 
= \frac{{\bf E}_m(\vec{\xi}_{ijl})}{\sum_{n=1}^3{\bf E}_n(\vec{\xi}_{ijl})},
\\
{\bf E}_m(\vec{\xi}_{ijl}) 
&= 
\left\| 
\left[ \frac{d_1 \vec{\bfnc{V}}}{d_1 \bfnc{x}} \right]
\frac{D_{m}{\bf x}}
{\left\| D_{m}{\bf x} \right\|} 
\right\|_{\vec{\xi}_{ijl}}  + \epsilon, \quad m = 1,2,3,
\end{aligned}
\end{equation} 
where 
$
D_{m}{\bf x}(\vec{\xi}_{ijl})   = \left[
\begin{array}{c c c}
\widehat{D}_{\xi^m}{\bf x}^1
&
\widehat{D}_{\xi^m}{\bf x}^2
& 
\widehat{D}_{\xi^m}{\bf x}^3
\end{array}
\right]^\top_{\vec{\xi}_{ijl}}
$
is a $p^{\rm{th}}$-order approximation of the tangential derivative in the $\xi^m$ direction and
$\left[\frac{d_1 \vec{\bfnc{V}}}{d_1 \bfnc{x}} \right]$ is a block diagonal matrix with entries
\begin{equation}
\left[ \frac{d_1 \vec{\bfnc{V}}}{d_1 \bfnc{x}} \right](\vec{\xi}_{ijl})
 =
 \left[
\begin{array}{c c c}
 \frac{d_1 \bfnc{V}_1}{d_1 x^1} & \frac{d_1 \bfnc{V}_1}{d_1 x^2} & \frac{d_1 \bfnc{V}_1}{d_1 x^3}  \\
 \frac{d_1 \bfnc{V}_2}{d_1 x^1} & \frac{d_1 \bfnc{V}_2}{d_1 x^2} & \frac{d_2 \bfnc{V}_2}{d_1 x^3}  \\
 \frac{d_1 \bfnc{V}_3}{d_1 x^1} & \frac{d_1 \bfnc{V}_3}{d_1 x^2} & \frac{d_3 \bfnc{V}_3}{d_1 x^3}
\end{array}
\right]_{\vec{\xi}_{ijl}} .
\end{equation} 
Then, an average grid spacing on the $k$-th element is evaluated as follows:
\begin{equation}
\hat{h}^k  = 
\frac{{\bf 1}_1^\top \widehat{\mathcal{P}} {\bf L}^k}{{\bf 1}_1^\top \widehat{\mathcal{P}} {\bf 1}_1}.
\end{equation} 
Finally, the local reference grid spacing, $h^k$, is given by
\begin{equation}
\begin{array}{ll}
h^k 
&
=
\left\{
\begin{array}{ll}
\left[
 \prod\limits_{i \in N_k}h^{\rm v}_i 
\right]^{\frac{1}{|N_k|}} 
  , & {\rm if} \ |N_k| > 0, \\
0,                                                                                                                                                                     & {\rm otherwise}
\end{array}
\right. 
\quad
h^{\rm v}_i  
= 
\left\{
\begin{array}{ll}
\left[
 \prod\limits_{j \in I_i}\hat{h}^j
\right]^{\frac{1}{|I_i|}} 
  , & {\rm if} \ |I_i| > 0 \\
0,                                                                                                                                                                     & {\rm otherwise,}
\end{array}
\right. 
\end{array}
\end{equation}
where $I_i$ is a set of indices of all elements that touch the $i$-th global vertex and have a nonzero $\hat{h}^k$, 
and $N_k$ is a set of indices of all vertices that touch the $k$-th element and have a nonzero $h^{\rm v}_i$.

\subsection{Artificial viscosity coefficient}
As in \cite{UY}, the artificial viscosity for each element is constructed by first finding its physics-based upper bound, $\mu_{\max}$. The elementwise scalar function, $\mu_{\max}$, is constructed such that it is proportional to two-point jumps in the maximum eigenvalue of the inviscid flux Jacobian.
If $Sn^k = 0$ on the $k$-th element, we set $\mu^k_{\max} = 0$.
If $Sn^k > 0$, then $\mu^k_{\max}$ is given by
\begin{equation}
\label{muMax}
\begin{array}{ll}
\mu^k_{\max} 
&=
\frac{\left( h^k \right)^2}{p}
\frac{3(\gamma+1)}{32\gamma} 
z_{Sn^k}
\max\limits_{1 \leq i,j,l \leq N} 
\left[
z_{Pn^k,Cn^k}
\sqrt{
\bar{\bfnc{\rho}}
\sum\limits_{ m=1}^{3} \left( \frac{d_1 {\bf \sqrt{\gamma P}}}{d_1 x^m} \right)^2}
\right.
\\
&+
\left.
\bar{\bfnc{\rho}}
\left(
\frac{\mathcal{P}_{1,1}}{2}
\sqrt{\sum\limits_
{ 
\substack{n,m=1,\\n \ne m}}^{3}
\left(\frac{d_1 \bfnc{V}_n}{d_1 x^m} \right)^2 }
+
\min({\bf Ma},z_{Cn^k})
\left|
\sum\limits_
{a = 1}^{3}
\frac{d_1 \bfnc{V}_a}{d_1 x^a} 
\right|
\right)
\right]_{\vec{\xi}_{ijl}} ,
\\
\bar{\bfnc{\rho}}(\vec{\xi}_{ijl}) &= 
\left(
\bfnc{\rho}(\vec{\xi}_{ijl})
\bfnc{\rho}(\vec{\xi}_{i+1jl})
\bfnc{\rho}(\vec{\xi}_{i-1jl})
\bfnc{\rho}(\vec{\xi}_{ij+1l})
\bfnc{\rho}(\vec{\xi}_{ij-1l})
\bfnc{\rho}(\vec{\xi}_{ijl+1})
\bfnc{\rho}(\vec{\xi}_{ijl-1})
\right)^{\frac17} ,
\\
{\bf Ma}(\vec{\xi}_{ijl}) &=
\frac{\|\vec{\bfnc{V}}(\vec{\xi}_{ijl})\|}{{\bf c}(\vec{\xi}_{ijl})} ,
\\
z_{Sn^k} &= \min(0.5,1.25(Sn^k-0.2)) \geq 0, \quad z_{Cn^k} = \frac{\mathcal{P}_{1,1}}{2}(1-Cn^k) + Cn^k,
\\
z_{Pn^k,Cn^k} &= \min(\frac{\mathcal{P}_{1,1}}{2}(1-Pn^k) + Pn^k, z_{Cn^k}),
\end{array}
\end{equation}
where $h^k$ is a reference grid spacing defined in Section~\ref{sec:locRefLenVisc}, $\mathcal{P}_{1,1}$ is the smallest distance between flux points of the 1-D computational element, and
${\bf c}(\vec{\xi}_{ijk})$ is the speed of sound at $\vec{\xi}_{ijk}$.  Note that $\mu^k_{\max}$ is constructed such that it achieves its maximum value at shocks. 

The velocity gradient components in Eq.~(\ref{muMax}) are approximated by using the following two-point discretizations:
\begin{equation}
\frac{d_1 \bfnc{V}}{d_1 \xi^m}(\vec{\xi}_{i}) =\left\{
\begin{array}{lll}
\frac{\bfnc{V}(\vec{\xi}_{i+1})-\bfnc{V}(\vec{\xi}_{i})}{\xi^m_{i+1}-\xi^m_{i}} , & {\rm if} \ i \leq \frac{p+1}{2} 
\\
\frac{\bfnc{V}(\vec{\xi}_{i+1})-\bfnc{V}_a(\vec{\xi}_{i-1})}{\xi^m_{i+1}-\xi^m_{i-1}},                                                                                                                                                                     & {\rm if} \ \frac{p+1}{2} < i \leq \frac{p+1}{2} + 1
\\
\frac{\bfnc{V}(\vec{\xi}_{i})-\bfnc{V}(\vec{\xi}_{i-1})}{\xi^m_{i}-\xi^m_{i-1}} , & {\rm otherwise} 
\end{array}
\right.,
\end{equation}
for $m = 1, 2, 3$. At element interfaces, we also include velocity jumps formed by the collocated states of elements that share the entire face.  
The gradient of the $\sqrt{\gamma \bf P}$ term in Eq.~(\ref{muMax}) is calculated in a similar fashion.

The globally continuous artificial viscosity $\bfnc{\mu}_k^{AD}$ is then constructed by using the following smoothing procedure.
At each element vertex, we from a unique vertex viscosity coefficient, $\mu^{\rm ver}_i  = \max\limits_{k \in I_i}\mu^k_{\max}$,
where $I_i$ contains indices of all elements that share the $i$-th grid vertex.  After that, 
the globally continuous artificial viscosity is obtained by using the tri-linear interpolation of 8 vertex viscosities, $\mu^{\rm ver}_i$, of the given hexahedral element.

\section{First-order positivity-preserving scheme}
\label{POSPRE} 

We now present a positivity-preserving, entropy stable first-order scheme for the regularized $3$-D compressible Navier-Stokes equations~\eqref{eq:regNS_Curvilinear}. 
Hereafter,
${\bm \nu}_i  = \left[ 
\begin{array}{ccc}
\rho_i &  \vec{\bfnc{V}}_i & T_i 
\end{array}
\right]^\top
$ 
is used to denote the vector of primitive variables  at the $i$-th solution point. We also make substantial use of the logarithmic, harmonic, arithmetic, and geometric averages, which are denoted  for quantities $z_1$ and $z_2$ by using the following subscript notation: $z_{L}$, $z_{H}$, $z_{A}$, and $z_{G}$, respectively.  Note that the following inequalities hold for any $z_1 > 0$, $z_2 > 0$:
$z_H/2 < \min(z_1, z_2) \leq z_H \leq z_G \leq z_L \leq z_A \leq \max(z_1, z_2)$.

\subsection{First-order scheme}
The first-order scheme on a given element is approximated on the same LGL points used for the high-order scheme.  The first-order element treats solution points in a finite volume manner with the flux points acting as control volume edges and can be written as
\begin{equation}
\label{semiDiscCurvi1stOrder} 
\hat{{\bf U}}_t
+ 
\sum\limits_{ l=1}^{3} 
\mathcal{P}^{-1}_{\xi^l} \Delta_{\xi^l}
\left[ 
\hat{\bar{{\bf f}}}^{(in)}_l 
-
\hat{\bar{{\bf f}}}^{(AD_1)}_{\hat{\bar{\sigma}},l}
-
\hat{\bar{{\bf f}}}^{(AD_1)}_l 
\right] 
-
D_{\xi^l}\hat{{\bf f}}^{(v)}_l 
= 
\sum\limits_{ l=1}^{3} \mathcal{P}^{-1}_{\xi^l} 
\left[ 
\hat{{\bf g}}_l + \hat{{\bf g}}^{(AD_1)}_l
\right],
\end{equation} 
where $\hat{\bar{{\bf f}}}^{(in)}_l$ and $\hat{\bar{{\bf f}}}^{(AD_1)}_{\hat{\bar{\sigma}},l}$, $\hat{\bar{{\bf f}}}^{(AD_1)}_l$
are first-order inviscid and artificial dissipation fluxes,
$\hat{{\bf f}}^{(v)}_l, l=1,2,3$, are  the high-order physical fluxes, and $\hat{{\bf g}}_l$ are high-order penalties that are identical to those used in \cite{CFNPSY}.  The way how first-order inviscid and artificial dissipation fluxes are constructed is discussed next.

\subsection{First-order inviscid term}
\label{sec:invLow}
The inviscid fluxes in Eq.~(\ref{semiDiscCurvi1stOrder}) are represented as the sum of entropy conservative and entropy dissipative terms: $\hat{\bar{{\bf f}}}^{(in)}_l = \hat{\bar{{\bf f}}}^{(EC)}_l - \hat{\bar{{\bf f}}}^{(ED)}_l$.  
The exact form of $\hat{\bar{{\bf f}}}^{(ED)}_l$ is presented in Section~\ref{MR_ROE}.  

The entropy conservative flux, $\hat{\bar{{\bf f}}}^{(EC)}_l$, is defined as follows:
\begin{equation}
\label{1ST_ECFLUX}
\left\{
\begin{array}{ll}
\hat{\bar{{\bf f}}}^{(EC)}_1(\vec{\xi}_{\overline{i}})
 = \bar{{ f}}_{(S)}({\bf U}(\vec{\xi}_{i}),{\bf U}(\vec{\xi}_{i+1})) \hat{\bar{\vec{{\bf a}}}}^1(\vec{\xi}_{\overline{i}}), 
& \text{for} \, \, 1 \leq i \leq N-1,\\
\hat{\bar{{\bf f}}}^{(EC)}_1(\vec{\xi}_{\overline{i}}) =  \bar{{ f}}_{(S)}({\bf U}(\vec{\xi}_{\overline{i}}),{\bf U}(\vec{\xi}_{\overline{i}})) \hat{\vec{{\bf a}}}^1(\vec{\xi}_{\overline{i}}), & \text{for} \, \, \overline{i} \in \{0,N \}, 
\end{array}
\right.
\end{equation}
where 
$\hat{\bar{\vec{{\bf a}}}}^1(\vec{\xi}_{\overline{i}}) 
=\sum\limits_{ R=i+1}^{N} \sum\limits_{ L=1}^{i} 2 q_{L,R}
\frac{\hat{\vec{{\bf a}}}^1(\vec{\xi}_{L}) +\hat{\vec{{\bf a}}}^1(\vec{\xi}_{R})}{2}$ and
$\bar{{ f}}_{(S)}(\cdot,\cdot)$ is any two-point, entropy conservative inviscid flux.  
For any two admissible states ${\bm u}_1$ and ${\bm u}_2$, this two--point flux $\bar{{f}}_{(S)}(\cdot,\cdot)$  satisfies the following  condition \cite{TAD2003}:
\begin{equation}
\label{2PT_EC}
\left({\bm w}_1 - {\bm w}_2 \right)^\top \bar{{f}}_{(S)}({\bm u}_1,{\bm u}_2) =  \vec{\bm \psi}_1 - \vec{\bm \psi}_2 .
\end{equation}
In the present analysis, we use the entropy conservative flux developed in \cite{Chand}.
Comparing Eq.~\eqref{1ST_ECFLUX} 
with the high-order entropy stable flux in \cite{CFNPSY},
we note that they are equivalent at the element faces ($\overline{i} \in \{0,N \}$) and only differ at the interior points. 
The high-order interpolation of the metric terms to the flux points is sufficient for proving Lemma~\ref{LEMMA_2PTECFLUX_FREE_and_SS}.
\begin{lemma}
\label{LEMMA_2PTECFLUX_FREE_and_SS}
The inviscid flux $\hat{\bar{{\bf f}}}^{(EC)}_l$ given by Eq. \eqref{1ST_ECFLUX} is freestream preserving and entropy conservative, so that the following equation holds:
\begin{equation}
\begin{aligned} 
\label{ECFLUX_SS_CONTRIB_1STORDER}
\sum\limits_{ l=1}^{3}  
{\bf w}^\top
\mathcal{P}
\mathcal{P}^{-1}_{\xi^l}\Delta_{\xi^l}\hat{\bar{{\bf f}}}^{(EC)}_l 
= \sum\limits_{ l=1}^{3}
{\bf 1}_1^\top
\widehat{\mathcal{P}}_{\perp,\xi^l} 
\widehat{B}_{\xi^l}\hat{{\bf F}}_l. 
\end{aligned}
\end{equation}
Hence, $\hat{\bar{{\bf f}}}^{(EC)}_l$ given by Eq.~\eqref{1ST_ECFLUX} has the same total entropy contribution on each element as the high-order entropy consistent flux in \cite{CFNPSY}.
\end{lemma}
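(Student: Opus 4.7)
The plan is to establish freestream preservation and the entropy identity separately, relying on the tensor-product SBP framework already in place.

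\textbf{Freestream preservation.} Setting $\U(\vec{\xi}_{ijl}) \equiv \U^*$ constant, consistency of the Chandrashekar flux gives $\bar{f}_{(S)}(\U^*,\U^*) = \Fxm(\U^*)$. The definition in \eqref{1ST_ECFLUX} then collapses at interior flux points to $\hat{\bar{{\bf f}}}^{(EC)}_l(\vec{\xi}_{\bar{i}}) = \sum_m \Fxm(\U^*)\,\hat{\bar{a}}^l_m(\vec{\xi}_{\bar{i}})$, with $\hat{\bar{a}}^l_m$ the flux-point reconstruction built from arithmetic averages of the endpoint metrics. The 1-D SBP telescoping identity $\mathcal{P}^{-1}_{\xi^l}\Delta_{\xi^l}\hat{\bar{a}}^l_m = D_{\xi^l}[\hat{a}^l_m]\bm{1}$ reduces $\sum_l \mathcal{P}^{-1}_{\xi^l}\Delta_{\xi^l}\hat{\bar{{\bf f}}}^{(EC)}_l$ to $\sum_m \Fxm(\U^*)\sum_l D_{\xi^l}[\hat{a}^l_m]\bm{1}$, which vanishes by the discrete GCL \eqref{DISC_GCL}.

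\textbf{Entropy identity.} The tensor-product factorization of the norms yields $\mathcal{P}\mathcal{P}^{-1}_{\xi^l}=\mathcal{P}_{\perp,\xi^l}$, so the contraction in \eqref{ECFLUX_SS_CONTRIB_1STORDER} decouples into independent 1-D problems on each $\xi^l$-line weighted by the orthogonal slice norm. On any such line, substituting the double-sum form of $\hat{\bar{\vec{{\bf a}}}}^l$ into \eqref{1ST_ECFLUX} and re-indexing produces the Carpenter--Fisher pairwise representation: the flux divergence is a sum over pairs $(L,R)$ of $2q_{L,R}\,\bar{f}_{(S)}(\U_L,\U_R)$ contracted with the averaged metric $(\hat{\vec{{\bf a}}}^l_L+\hat{\vec{{\bf a}}}^l_R)/2$. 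Left-multiplication by the entropy variables and use of the antisymmetry $q_{L,R}+q_{R,L}=0$ in the interior converts each pair's contribution into $q_{L,R}(\bm{w}_L-\bm{w}_R)^\top\bar{f}_{(S)}(\U_L,\U_R)$, which by the Tadmor property \eqref{2PT_EC} becomes $q_{L,R}(\vec{\bm\psi}_L-\vec{\bm\psi}_R)$ contracted with the averaged metric. A second SBP telescoping along the line then collapses the sum to boundary contributions at $\bar{i}\in\{0,N\}$. Since \eqref{1ST_ECFLUX} uses the unaveraged metric at $\bar{i}\in\{0,N\}$, the boundary expression matches verbatim that obtained in the high-order analysis of \cite{CFNPSY}, producing $\sum_l \bm{1}_1^\top \widehat{\mathcal{P}}_{\perp,\xi^l}\widehat{B}_{\xi^l}\hat{{\bf F}}_l$.

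\textbf{Main obstacle.} The delicate point is that the metric averaging in the double-sum definition of $\hat{\bar{\vec{{\bf a}}}}^l$ couples pairs $(L,R)$ nonlinearly with the two-point flux, and one must check that this coupling does not obstruct the pairwise telescoping. The argument hinges on the fact that this averaging is \emph{separable}---an arithmetic mean of endpoint values---so the telescoping splits cleanly into the classical SBP identity applied per Cartesian component, with the discrete GCL \eqref{DISC_GCL} providing the consistency needed to recover exactly the high-order boundary contribution. Once separability is exploited, the reduction to the high-order boundary form is immediate from the unaveraged boundary treatment built into \eqref{1ST_ECFLUX}.
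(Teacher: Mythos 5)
Your freestream-preservation argument is essentially the paper's: consistency collapses the flux to $\sum_m \Fxm(\U^*)$ times the averaged metrics, the telescoping of the double-sum average turns $\mathcal{P}^{-1}_{\xi^l}\Delta_{\xi^l}$ into $D_{\xi^l}[\hat a^l_m]{\bf 1}$, and the discrete GCL \eqref{DISC_GCL} finishes it. No complaint there.

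The entropy-conservation part, however, rests on a false algebraic identity. You claim that substituting the double-sum form of $\hat{\bar{\vec{{\bf a}}}}{}^l$ into \eqref{1ST_ECFLUX} and re-indexing produces the Carpenter--Fisher pairwise representation $\sum_{L,R} 2q_{L,R}\,\bar f_{(S)}(\U_L,\U_R)\,\tfrac{1}{2}(\hat{\vec{{\bf a}}}{}^l_L+\hat{\vec{{\bf a}}}{}^l_R)$. It does not. In \eqref{1ST_ECFLUX} the two-point flux function is evaluated \emph{only} at the adjacent pair $(\U_i,\U_{i+1})$, and the double sum averages only the metric terms; the interior flux is $\bar f_{(S)}(\U_i,\U_{i+1})\cdot\sum_{L\le i<R}2q_{L,R}\tfrac{1}{2}(\hat{\vec{{\bf a}}}{}^l_L+\hat{\vec{{\bf a}}}{}^l_R)$, which is not $\sum_{L\le i<R}2q_{L,R}\,\bar f_{(S)}(\U_L,\U_R)\,\tfrac{1}{2}(\hat{\vec{{\bf a}}}{}^l_L+\hat{\vec{{\bf a}}}{}^l_R)$. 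Were your re-indexing valid, the first-order flux would simply equal the high-order flux of \cite{CFNPSY}, contradicting the remark immediately preceding the lemma that the two differ at interior flux points. Consequently the ensuing steps (pairing $({\bf w}_L-{\bf w}_R)$ with $\bar f_{(S)}(\U_L,\U_R)$ via the antisymmetry of $q_{L,R}$, then applying \eqref{2PT_EC} pairwise) are performed on an object that is not the flux under consideration. The argument that actually works, and the one the paper uses, is a summation by parts on the telescopic form so that the difference falls on the entropy variables; the interior terms are then $({\bf w}_{i+1}-{\bf w}_i)^\top\bar f_{(S)}(\U_i,\U_{i+1})\cdot\hat{\bar{\vec{{\bf a}}}}{}^l(\vec\xi_{\overline{i}})=(\vec{\bm\psi}_{i+1}-\vec{\bm\psi}_{i})\cdot\hat{\bar{\vec{{\bf a}}}}{}^l(\vec\xi_{\overline{i}})$ by the adjacent-pair Tadmor condition \eqref{2PT_EC}, after which a second summation by parts moves the difference onto the averaged metrics. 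A related flaw: you assert the sum "collapses to boundary contributions" along each line, but in the curvilinear setting a volume term $\sum_{ijk}\widehat{\mathcal{P}}\,\vec{\bm\psi}\cdot\sum_l D_{\xi^l}[\hat a^l]{\bf 1}$ survives and vanishes only after summing over all three directions via \eqref{DISC_GCL}; the boundary $\vec{\bm\psi}$ terms cancel against $\widehat B_{\xi^l}\hat{\bm\psi}_l$, leaving exactly $\widehat B_{\xi^l}\hat{{\bf F}}_l$.
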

\begin{proof}
To prove the freestream preservation, we show that the following equation holds for any admissible constant state, ${\bm u}_{0}$:
$\sum\limits_{ l=1}^{3} 
\mathcal{P}^{-1}_{\xi^l} 
\Delta_{\xi^l}
\hat{\bar{{\bf f}}}^{(EC)}_l 
=
{\bf 0}_5.
$
Note that  
$
\bar{{ f}}_{(S)}({\bf U}(\vec{\xi}_{ijm}),{\bf U}(\vec{\xi}_{kln}))
=
\bar{{ f}}_{(S)}({\bm u}_{0},{\bm u}_{0})
=
f({\bm u}_{0})
$,
for any two solution points $\vec{\xi}_{ijm}$ and $\vec{\xi}_{kln}$ on this element.
Let us evaluate  
$
\sum\limits_{ l=1}^{3} 
\mathcal{P}^{-1}_{\xi^l} 
\Delta_{\xi^l}
\hat{\bar{{\bf f}}}^{(EC)}_l 
$
at point $\vec{\xi}_{ijm}$ on the element:
\begin{equation}
\begin{array}{ll}
&\left[
\sum\limits_{ l=1}^{3} 
\mathcal{P}^{-1}_{\xi^l} 
\Delta_{\xi^l}
\hat{\bar{{\bf f}}}^{(EC)}_l 
\right]({\vec{\xi}_{ijk}})
\\
&=
f({\bm u}_{0})
\left[
\frac{
\hat{\bar{\vec{{\bf a}}}}^1(\vec{\xi}_{\overline{i}jk}) 
-
\hat{\bar{\vec{{\bf a}}}}^1(\vec{\xi}_{\overline{i-1}jk}) 
}
{\mathcal{P}_{i,i}}
+
\frac{
\hat{\bar{\vec{{\bf a}}}}^2(\vec{\xi}_{i\overline{j}k}) 
-
\hat{\bar{\vec{{\bf a}}}}^2(\vec{\xi}_{i\overline{j-1}k})}
{\mathcal{P}_{j,j}}
+
\frac{
\hat{\bar{\vec{{\bf a}}}}^3(\vec{\xi}_{ij\overline{k}}) 
-
\hat{\bar{\vec{{\bf a}}}}^3(\vec{\xi}_{ij\overline{k-1}})}
{\mathcal{P}_{k,k}}
\right]
\\
&=
f({\bm u}_{0})
\left[
\frac{
\sum\limits_{n=1}^{N}
q_{i,n}
\hat{\vec{{\bf a}}}^1(\vec{\xi}_{njk}) 
}{\mathcal{P}_{i,i}}
+
\frac{
\sum\limits_{n=1}^{N}
q_{j,n}
\hat{\vec{{\bf a}}}^2(\vec{\xi}_{ink}) 
}{\mathcal{P}_{j,j}}
+
\frac{
\sum\limits_{n=1}^{N}
q_{k,n}
\hat{\vec{{\bf a}}}^3(\vec{\xi}_{ijn}) 
}{\mathcal{P}_{k,k}}
\right]
= \left[0, \ldots, 0\right]^\top,
\end{array}
\end{equation}
where the last equality follows from the fact that the metric coefficients satisfy the discrete GCL equation given by Eq.~\eqref{DISC_GCL}.

Let us now show that Eq.~(\ref{ECFLUX_SS_CONTRIB_1STORDER}) holds.
\begin{equation}
\begin{array}{ll}
\sum\limits_{ l=1}^{3} 
{\bf w}^\top \mathcal{P}
\mathcal{P}^{-1}_{\xi^l} 
\Delta_{\xi^l}
\hat{\bar{{\bf f}}}^{(EC)}_l
&=
\sum\limits_{ l=1}^{3} 
{\bf w}^\top \mathcal{P}_{\perp,\xi^l} 
\Delta_{\xi^l}
\hat{\bar{{\bf f}}}^{(EC)}_l
\\
&=
\sum\limits_{ l=1}^{3} 
\widehat{\bf 1}_1^\top
\widehat{\mathcal{P}}_{\perp,\xi^l} 
\left[
{\bf w}^\top 
\Delta_{\xi^l}
\hat{\bar{{\bf f}}}^{(EC)}_l
\right]
\\
&=
\sum\limits_{ l=1}^{3} 
\widehat{\bf 1}_1^\top
\widehat{\mathcal{P}}_{\perp,\xi^l}
\left[
{\bf w}^\top 
\tilde{B}_{\xi^l}
\hat{\bar{{\bf f}}}^{(EC)}_l
-
{\bf w}^\top 
\tilde{\Delta}_{\xi^l}
\hat{\bar{{\bf f}}}^{(EC)}_l
\right]
\\
&=
\sum\limits_{ l=1}^{3} 
\widehat{\bf 1}_1^\top
\widehat{\mathcal{P}}_{\perp,\xi^l}
\left[
\widehat{B}_{\xi^l}
\left(
\hat{\bfnc{\psi}}_l 
+
\hat{{\bf F}}_l  
\right)
-
{\bf w}^\top 
\tilde{\Delta}_{\xi^l}
\hat{\bar{{\bf f}}}^{(EC)}_l
\right].
\end{array}
\end{equation}
Hence, it only remains to show that 
$
\sum\limits_{ l=1}^{3} 
\widehat{\bf 1}_1^\top
\widehat{\mathcal{P}}_{\perp,\xi^l}
\left[
\widehat{B}_{\xi^l}
\hat{\bfnc{\psi}}_l 
-
{\bf w}^\top 
\tilde{\Delta}_{\xi^l}
\hat{\bar{{\bf f}}}^{(EC)}_l
\right]
=
0
$.
\begin{equation}
\begin{array}{ll}
&
\sum\limits_{ l=1}^{3} 
\widehat{\bf 1}_1^\top
\widehat{\mathcal{P}}_{\perp,\xi^l}
\left[
\widehat{B}_{\xi^l}
\hat{\bfnc{\psi}}_l 
-
{\bf w}^\top 
\tilde{\Delta}_{\xi^l}
\hat{\bar{{\bf f}}}^{(EC)}_l
\right]
=
\sum\limits_{j,m=1}^{N}
\widehat{\mathcal{P}}_{\perp,\xi^1}({\vec{\xi}_{ijm}})
\\
&
\left[
\hat{\bfnc{\psi}}_1({\vec{\xi}_{N}})
-
\hat{\bfnc{\psi}}_1({\vec{\xi}_{1}})
-
\sum\limits_{i=1}^{N-1}
\hat{\bar{{\bf f}}}^{(EC)}_1({\vec{\xi}_{i}})
\left(
{\bf w}({\vec{\xi}_{i+1}})
-
{\bf w}({\vec{\xi}_{i}})
\right)^\top 
\right]_{{\vec{\xi}_{\cdot jm}}} + \cdots
\\
&=
\sum\limits_{j,m=1}^{N}
\widehat{\mathcal{P}}_{\perp,\xi^1}({\vec{\xi}_{ijm}})
\left[
\hat{\bfnc{\psi}}_1({\vec{\xi}_{N}})
-
\hat{\bfnc{\psi}}_1({\vec{\xi}_{1}})
-
\sum\limits_{i=1}^{N-1}
\left(
\vec{\bfnc{\psi}}({\vec{\xi}_{i+1}})
-
\vec{\bfnc{\psi}}({\vec{\xi}_{i}}) 
\right)
\hat{\bar{\vec{{\bf a}}}}^1(\vec{\xi}_{\overline{i}}) 
\right]_{{\vec{\xi}_{\cdot jm}}} + \cdots
\\
&=
\sum\limits_{j,m=1}^{N}
\widehat{\mathcal{P}}_{\perp,\xi^1}({\vec{\xi}_{ijm}})
\left[
\sum\limits_{i=1}^{N}
\vec{\bfnc{\psi}}({\vec{\xi}_{i}}) 
\hat{\bar{\vec{{\bf a}}}}^1(\vec{\xi}_{\overline{i}}) 
-
\sum\limits_{i=1}^{N}
\vec{\bfnc{\psi}}({\vec{\xi}_{i}}) 
\hat{\bar{\vec{{\bf a}}}}^1(\vec{\xi}_{\overline{i-1}})
\right]_{{\vec{\xi}_{\cdot jm}}} + \cdots
\\
&=
\sum\limits_{i,j,m=1}^{N}
\widehat{\mathcal{P}}({\vec{\xi}_{ijm}})
\vec{\bfnc{\psi}}({\vec{\xi}_{ijm}})
\left[
\frac{
\hat{\bar{\vec{{\bf a}}}}^1(\vec{\xi}_{\overline{i}jm}) 
-
\hat{\bar{\vec{{\bf a}}}}^1(\vec{\xi}_{\overline{i-1}jm}) 
}
{\mathcal{P}_{i,i}}
+
\frac{
\hat{\bar{\vec{{\bf a}}}}^2(\vec{\xi}_{i\overline{j}m}) 
-
\hat{\bar{\vec{{\bf a}}}}^2(\vec{\xi}_{i\overline{j-1}m})}
{\mathcal{P}_{j,j}}
+
\ldots
\right] 
=0, 
\end{array}
\end{equation}
where 
$\vec{\bfnc{\psi}}({\vec{\xi}_{i}}) 
= 
\left[ 
\bfnc{\psi}_{x^1}({\vec{\xi}_{i}}), \bfnc{\psi}_{x^2}({\vec{\xi}_{i}}), \bfnc{\psi}_{x^3}({\vec{\xi}_{i}}) 
\right]^\top$. The last equality in the above equation again follows from the discrete GCL, Eq.~\eqref{DISC_GCL}.  
\end{proof}

\subsection{First-order artificial dissipation}
The first-order artificial dissipation fluxes and penalties, $\hat{\bar{{\bf f}}}^{(AD_1)}_{\hat{\bar{\sigma}},l}$, $\hat{\bar{{\bf f}}}^{(AD_1)}_l$ and $\hat{{\bf g}}^{(AD_1)}_l$,  in Eq.~\eqref{semiDiscCurvi1stOrder} are constructed as follows.
We begin by presenting the following lemma.
\begin{lemma}
\label{lem:dvdw2pt}
For any two vectors of conservative variables ${\bm u}_1$ and ${\bm u}_2$ with positive density and temperature values, the following  matrix, 
\begin{equation}
\label{dVdW_2PT}
{ \nu}_{ w}({\bm u}_1,{\bm u}_2)
=
\left[
\begin{array}{ccccc}
\frac{\rho_L}{R} & \frac{\rho_L}{R}(V_1)_A & \frac{\rho_L}{R}(V_2)_A  & \frac{\rho_L}{R}(V_3)_A  & \frac{\rho_L}{R} \E_{avg} \\
    0                   &         T_H                      &               0                     &                  0                 &        T_H (V_1)_A              \\
    0                   &         0                          &               T_H                 &                  0                 &        T_H (V_2)_A              \\
    0                   &         0                          &               0                     &                T_H               &        T_H (V_3)_A              \\
    0                   &         0                          &               0                     &                  0                 &        T_G^2                      \\
\end{array}
\right],
\end{equation}
is 1) consistent with 
$\frac{\partial \bfnc{\nu}}{\partial {\bm w}}$, 2) invertible and 3) satisfies the exact algebraic relation 
${ \nu}_{ w}({\bm u}_1,{\bm u}_2)
\left( 
{\bm w}_2 - {\bm w}_1
\right)
=
\left( 
{\bm \nu}_2 - {\bm \nu}_1
\right)
$,
where 
$
\E_{avg} = 
\frac{T^2_G}{T_L}\frac{R}{\gamma-1} 
+ 
\frac{
\vec{\bfnc{V}}({\bm u}_1)
\cdot 
\vec{\bfnc{V}}({\bm u}_2)
}{2}
$,
and $\vec{\bfnc{V}}({\bm u})$ is the velocity associated with state ${\bm u}$.
\end{lemma}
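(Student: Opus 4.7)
The plan is to handle the three claims in the order (2) invertibility, (1) consistency, and (3) the algebraic identity, since the first two are essentially by inspection and the third carries all the work. For invertibility, the matrix in \eqref{dVdW_2PT} is upper triangular with diagonal entries $\rho_L/R$, $T_H$, $T_H$, $T_H$, $T_G^2$; under the standing hypothesis $\rho_i,T_i>0$, each of these is strictly positive, so $\det{\nu}_{w}=\rho_L T_H^3 T_G^2/R>0$. For consistency with $\partial\bfnc{\nu}/\partial{\bm w}$, I would set ${\bm u}_1={\bm u}_2={\bm u}$: every average collapses to its pointwise value and $\E_{avg}$ reduces to the total specific energy $RT/(\gamma-1)+|\vec{\bfnc{V}}|^2/2$. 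Direct differentiation of the standard entropy variables $w_1=R\gamma/(\gamma-1)-s-|\vec{\bfnc{V}}|^2/(2T)$, $w_{i+1}=V_i/T$, $w_5=-1/T$ (associated with the thermodynamic entropy $s=R\ln T/(\gamma-1)-R\ln\rho$) then reproduces the collapsed matrix.

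For the algebraic identity ${\nu}_{w}({\bm w}_2-{\bm w}_1)={\bm \nu}_2-{\bm \nu}_1$, I would verify each row separately. Row five is immediate: $T_G^2(w_5^{(2)}-w_5^{(1)})=T_1T_2(1/T_1-1/T_2)=T_2-T_1$. For rows two through four, substitute $w_{i+1}=V_i/T$ and $T_H=2T_1T_2/(T_1+T_2)$ into $T_H(w_{i+1}^{(2)}-w_{i+1}^{(1)})+T_H(V_i)_A(w_5^{(2)}-w_5^{(1)})$, expand $(V_i)_A=(V_i^{(1)}+V_i^{(2)})/2$, and combine over the common denominator $T_1+T_2$; the cross terms telescope and only $V_i^{(2)}-V_i^{(1)}$ survives.

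The first row is the principal obstacle and is essentially the algebraic statement that motivates the specific form chosen for $\E_{avg}$. Multiplying through by $R/\rho_L$ and invoking the logarithmic-mean identity $\rho_2-\rho_1=\rho_L(\ln\rho_2-\ln\rho_1)$, the required equation reduces to
\[
(w_1^{(2)}-w_1^{(1)}) + \sum_{j=1}^{3} (V_j)_A\,(w_{j+1}^{(2)}-w_{j+1}^{(1)}) - \E_{avg}\,(1/T_2-1/T_1) = R(\ln\rho_2-\ln\rho_1).
\]
After substituting the explicit forms of $w_1$ and of $s$, the $R\ln\rho$ contributions cancel. The residual $R\ln T/(\gamma-1)$ piece from the entropy jump is absorbed by the thermal part of $\E_{avg}(1/T_2-1/T_1)$ through a second application of the logarithmic-mean identity, this time for $T$: $(T_G^2/T_L)(1/T_2-1/T_1)=-(\ln T_2-\ln T_1)$, which is exactly why the coefficient $T_G^2/T_L$ appears in $\E_{avg}$. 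The kinetic bookkeeping then closes the argument via the identity $\sum_{j}(V_j)_A(V_j^{(2)}/T_2-V_j^{(1)}/T_1) = \tfrac12\bigl[(|\vec{\bfnc{V}}|^2/T)_2-(|\vec{\bfnc{V}}|^2/T)_1\bigr] + \tfrac12(\vec{\bfnc{V}}_1\cdot\vec{\bfnc{V}}_2)(1/T_2-1/T_1)$, whose two terms precisely cancel the $-|\vec{\bfnc{V}}|^2/(2T)$ jump from $w_1$ and the $\vec{\bfnc{V}}_1\cdot\vec{\bfnc{V}}_2/2$ term in $\E_{avg}$, respectively. Tracking these cancellations, and recognizing that the unusual-looking form of $\E_{avg}$ is dictated precisely by the need to match two independent logarithmic-mean identities (one in $\rho$, one in $T$), is the delicate part of the argument.
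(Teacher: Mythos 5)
Your proof is correct and follows the same route as the paper, which simply asserts that consistency, invertibility (upper triangularity with positive diagonal), and the algebraic relation can all be checked by direct computation. You have merely supplied the explicit verification the paper leaves to the reader, and your identification of the two logarithmic-mean identities (in $\rho$ for the first row, in $T$ via the coefficient $T_G^2/T_L$ in $\E_{avg}$) as the reason for the particular averages chosen is exactly the right observation.
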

\begin{proof}
The consistency immediately follows from the direct comparison of Eq.~(\ref{dVdW_2PT}) with its continuous counterpart and the consistency of all averages used in ${ \nu}_{ w}({\bm u}_1,{\bm u}_2)$. The second statement is a direct consequence of the fact that ${ \nu}_{ w}({\bm u}_1,{\bm u}_2)$ is a upper triangular matrix with positive diagonal entries. The last statement can be verified directly by calculating the matrix vector product and comparing it with the vector on the right-hand side.
\end{proof}
\begin{remark}
 We denote the inverse of ${ \nu}_{ w}({\bm u}_1,{\bm u}_2)$ as 
${ w}_{ \nu}({\bm u}_1,{\bm u}_2)$, for which the following equality holds 
${ w}_{ \nu}({\bm u}_1,{\bm u}_2)
\left( 
{\bm \nu}_2 - {\bm \nu}_1
\right)
=
\left( 
{\bm w}_2 - {\bm w}_1
\right)
$.
\end{remark}

Using the matrix ${ \nu}_{ w}({\bm u}_1,{\bm u}_2)$, we now prove the following lemma. 
\begin{lemma}
\label{BREN_2PT}
Let $\vec{\bm n}$ be a non-zero direction vector, 
$\vec{\bar{\bm n}}
=
\frac{\vec{\bm n}}{\|\vec{\bm n}\|}
= 
\left[
\bar{n}_1, \bar{n}_2, \bar{n}_3
\right]^\top
$.
For any two admissible states ${\bm u}_1$ and ${\bm u}_2$ with positive density and temperature values,
\begin{equation}
\label{2PTBREN_PRIMMAT}
\begin{split}
&
c_{\nu}^{(B)}({\bm u}_1,{\bm u}_2,\vec{\bm n})
=
\|\vec{\bm n}\|^2
\left[
\begin{array}{ccc}
\sigma & \vec{\bfnc{0}}^\top   &  0 
\\
\sigma \vec{\bfnc{V}}_A & \mu\bar{\mathscr{N}}   &  \vec{\bfnc{0}}
\\
\sigma \E_{avg} 
& 
\mu\left(
\vec{\bfnc{V}}_A^\top + \frac{\vec{\bar{\bm n}}^\top}{3} \vec{\bfnc{V}}_A \cdot \vec{\bar{\bm n}} 
\right) 
& 
\kappa
\end{array}
\right],
\end{split}
\end{equation}
satisfies the following properties:
\begin{equation}
\begin{array}{ll}
&c^{(B)}({\bm u}_1,{\bm u}_2,\vec{\bm n})
=
c_{\nu}^{(B)}({\bm u}_1,{\bm u}_2,\vec{\bm n})
\nu_w({\bm u}_1,{\bm u}_2),
\\
&c^{(B)}({\bm u}_1,{\bm u}_1,\vec{\bm n})
=
\sum\limits_{l,m=1}^3
n_l\Cij{l}{m}^{(B)}({\bm u}_1)n_m,
\quad
c^{(B)}({\bm u}_1,{\bm u}_2,\vec{\bm n}) 
= 
(c^{(B)}({\bm u}_1,{\bm u}_2,\vec{\bm n}))^\top,
\\
&\bm{V}^\top
c^{(B)}({\bm u}_1,{\bm u}_2,\vec{\bm n}) 
\bm{V}  
> 0, 
\quad
\forall  \bm{V}\in\mathbb{R}^{5},   \bm{V} \ne \{{\bm 0}\},
\end{array}
\end{equation} 
where $\bar{\mathscr{N}}_{i,j} = \delta_{i,j} + \frac{\bar{n}_i\bar{n}_j}{3}$, $\E_{avg}$ is defined in Lemma~\ref{lem:dvdw2pt}, $\vec{\bfnc{V}}_A$ is the arithmetic average of the velocities, and $\sigma$, $\mu$, and $\kappa$ are the positive diffusion coefficients of the artificial dissipation flux, and
$\Cij{l}{m}^{(B)}$ are the viscosity matrices associated with the corresponding Cartesian artificial dissipation fluxes. 
\end{lemma}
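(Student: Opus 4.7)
The plan is to verify the four claims in the order listed, treating the product decomposition as the central computation from which the remaining three follow with relatively little additional work.

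First, I would establish the product identity $c^{(B)} = c_{\nu}^{(B)}\,\nu_w$ by direct matrix multiplication. The matrix $\nu_w$ from Lemma~\ref{lem:dvdw2pt} is block upper triangular, while $c_{\nu}^{(B)}$ has a block lower-triangular pattern with the density row supported only on the first column, the momentum rows supported on the first and second column blocks, and the energy row supported throughout. Multiplying them out using the explicit forms of $\E_{avg}$, the averages $\rho_L, T_H, T_G$, and $\vec{\bfnc{V}}_A$ will show that the product equals a symmetric $5\times 5$ matrix whose block structure mirrors that of the familiar continuous Brenner viscosity matrix $\sum_{l,m} n_l\,\Cij{l}{m}^{(B)} n_m$, but with two-point averages in place of pointwise values.

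Consistency and symmetry are then immediate consequences of this explicit form. For consistency, setting ${\bm u}_1 = {\bm u}_2 = {\bm u}$ collapses every average to its pointwise value ($\rho_L \to \rho$, $T_H \to T$, $T_G^2 \to T^2$, $\vec{\bfnc{V}}_A \to \vec{\bfnc{V}}$, and $\E_{avg}$ to the usual total enthalpy), so that the product reduces directly to the Cartesian Brenner viscosity matrix contracted with $\vec{\bm n}$. Symmetry is inherited from the symmetric block pattern identified in the explicit product from the previous step.

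Positive definiteness is the main obstacle. My plan is to re-express the product in the congruence form
\begin{equation*}
c^{(B)}({\bm u}_1,{\bm u}_2,\vec{\bm n}) \;=\; \nu_w^\top({\bm u}_1,{\bm u}_2)\,D({\bm u}_1,{\bm u}_2,\vec{\bm n})\,\nu_w({\bm u}_1,{\bm u}_2),
\end{equation*}
where $D$ is a symmetric block-diagonal matrix whose blocks are a positive scalar proportional to $\sigma$, a $3\times 3$ velocity block proportional to $\mu\,\bar{\mathscr{N}}$, and a positive scalar temperature block proportional to $\kappa$. Since $\bar{\mathscr{N}}_{ij} = \delta_{ij} + \bar{n}_i\bar{n}_j/3$ is the identity plus a rank-one positive semidefinite correction, it is SPD, and therefore $D$ is SPD. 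Because $\nu_w$ is invertible by Lemma~\ref{lem:dvdw2pt}, the congruence immediately yields ${\bm V}^\top c^{(B)} {\bm V} = (\nu_w {\bm V})^\top D (\nu_w {\bm V}) > 0$ for every nonzero ${\bm V}\in\mathbb{R}^5$, finishing the proof. The delicate step will be pinning down the precise form of $D$ that is consistent with the specific choice of averages ($\rho_L$, $T_H$, $T_G$, $\vec{\bfnc{V}}_A$, and $\E_{avg}$) in $\nu_w$; once $D$ is identified, verifying the congruence reduces to bookkeeping that exploits the exact algebraic relation $\nu_w({\bm w}_2 - {\bm w}_1) = {\bm \nu}_2 - {\bm \nu}_1$ guaranteed by Lemma~\ref{lem:dvdw2pt}.
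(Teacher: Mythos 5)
Your proposal is correct, and for the substantive part of the lemma (symmetry and positive definiteness) it takes a genuinely different, and arguably cleaner, route than the paper. The paper's proof consists essentially of a single step: it exhibits a Cholesky factorization $c^{(B)} = LDL^\top$ with a \emph{fully diagonal} $D$ whose pivots are written out explicitly (e.g.\ $T_H\mu\, d_2/3$ with $d_2=\bar n_1^2+3$, and a rather unwieldy quartic $d_3$ in the components of $\vec{\bar{\bm n}}$), and concludes positivity from the positivity of those pivots; the product identity, consistency, and symmetry are left as direct computations. Your congruence $c^{(B)}=\nu_w^\top D\,\nu_w$ avoids computing the Cholesky factor altogether by using $\nu_w$ itself as the congruence factor, and the ``delicate step'' you flag does in fact close: one checks directly that $c_{\nu}^{(B)}=\nu_w^\top D$ with the block-diagonal choice
\begin{equation*}
D \;=\; \|\vec{\bm n}\|^2\,
\mathrm{diag}\!\left(\frac{\sigma R}{\rho_L},\;\frac{\mu}{T_H}\,\bar{\mathscr{N}},\;\frac{\kappa}{T_G^2}\right),
\end{equation*}
using only the identity $\vec{\bfnc{V}}_A^\top\bar{\mathscr{N}}=\vec{\bfnc{V}}_A^\top+\tfrac{1}{3}(\vec{\bfnc{V}}_A\cdot\vec{\bar{\bm n}})\,\vec{\bar{\bm n}}^\top$ to match the energy row. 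Since $\bar{\mathscr{N}}=I+\tfrac13\vec{\bar{\bm n}}\vec{\bar{\bm n}}^\top$ has eigenvalues $1,1,4/3$, $D$ is SPD, and invertibility of $\nu_w$ (Lemma~\ref{lem:dvdw2pt}) gives positive definiteness; symmetry of $c^{(B)}$ comes for free from the congruence rather than by entrywise verification. What the paper's version buys is the explicit scalar pivots, which make the positivity of each diagonal entry visible at a glance; what yours buys is a structural argument that requires no Cholesky computation and unifies the symmetry and positivity claims.
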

\begin{proof}
 To show positive definiteness of $c^{(B)}({\bm u}_1,{\bm u}_2,\vec{\bm n})$ for positive diffusion coefficients, we use the Cholesky decomposition, thus leading to
$c^{(B)}({\bm u}_1,{\bm u}_2,\vec{\bm n})= LDL^\top$, where $L$ is invertible and
\begin{equation}
\begin{split}
\label{LDL_2PTBREN}
&D = {\rm diag}
\left(
\| \vec{\bm n} \|^2
\left[
\begin{array}{c c c c c}
\rho_L \frac{\sigma}{R}
&
T_H \mu \frac{d_2}{3}
&
T_H \mu \frac{d_3}{3+\bar{n}_1^2}
&
T_H \mu \frac{4}{4 - \bar{n}_3^2}
&
T_G^2 \kappa
\end{array}
\right]
\right),
\end{split}
\end{equation} 
$
d_2 =  \bar{n}^2_1+ 3
$
and 
$
d_3 = 4 \bar{n}_1^4 + 4 \bar{n}_2^4 + 7 \bar{n}_2^2 \bar{n}_3^2 + 3 \bar{n}_3^4 + \bar{n}_1^2 (8 \bar{n}_2^2 + 7 \bar{n}_3^2)
$.
Since the densities, temperatures, and diffusion coefficients are all positive, $D$ has only positive entries and $c^{(B)}({\bm u}_1,{\bm u}_2,\vec{\bm n})$ is positive definite.
\end{proof} 

For all fixed $1 \leq j,l \leq N$ and $\vec{\xi}_{i} = \vec{\xi}_{ijl}$, $1 \leq i \leq N-1$, the $\hat{\bar{{\bf f}}}^{(AD_1)}_{\hat{\bar{\sigma}},1}$, $\hat{\bar{{\bf f}}}^{(AD_1)}_1$ and $\hat{{\bf g}}^{(AD_1)}_1$ terms are defined as follows:
\begin{equation} 
\label{1ST_ADFLUX}
\begin{array}{ll}
&d\bfnc{\nu}_{m,n}
= \frac{
\bfnc{\nu}(\vec{\xi}_{m})
-
\bfnc{\nu}(\vec{\xi}_{n}) 
}
{\sqrt{{\bm J}(\vec{\xi}_{m}){\bm J}(\vec{\xi}_{n}})},
\\
&\hat{\bar{{\bf f}}}^{(AD_1)}_1(\vec{\xi}_{\overline{i}})
 = 
c_{\nu}^{(B)}({\bf U}(\vec{\xi}_{i}),{\bf U}(\vec{\xi}_{i+1}), \hat{\bar{\vec{{\bf a}}}}^1(\vec{\xi}_{\overline{i}}) )
\left. d\bfnc{\nu}_{i+1,i} \middle/ \left(\xi_{i+1}-\xi_{i}\right) \right.,
\\
&\hat{\bar{{\bf f}}}^{(AD_1)}_{\hat{\bar{\sigma}},1}(\vec{\xi}_{\overline{i}})
 = 
\left. c_{\nu}^{(B)}({\bf U}(\vec{\xi}_{i}),{\bf U}(\vec{\xi}_{i+1}), \hat{\bar{\vec{{\bf a}}}}^1(\vec{\xi}_{\overline{i}}),
\hat{\bar{\bfnc{\sigma}}}_1(\vec{\xi}_{\overline{i}}))\right|_{\mu = \kappa = 0}
\left. d\bfnc{\nu}_{i+1,i} \middle/ \left(\xi_{i+1}-\xi_{i}\right) \right.,
\\
&
\hat{\bar{{\bf f}}}^{(AD_1)}_1(\vec{\xi}_{\overline{0}})
=
\hat{\bar{{\bf f}}}^{(AD_1)}_1(\vec{\xi}_{\overline{N}})
=
\hat{\bar{{\bf f}}}^{(AD_1)}_{\hat{\bar{\sigma}},1}(\vec{\xi}_{\overline{0}})
=
\hat{\bar{{\bf f}}}^{(AD_1)}_{\hat{\bar{\sigma}},1}(\vec{\xi}_{\overline{N}})
=
{\bm 0},
\\
&
\hat{{\bf g}}^{(AD_1)}_1(\vec{\xi}_{i})
=
\left(
\hat{{\bf g}}^{(AD_1)}_1(\vec{\xi}_{1})\delta_{1i}
+
\hat{{\bf g}}^{(AD_1)}_1(\vec{\xi}_{N})\delta_{Ni}
\right),
\\
&
\hat{{\bf g}}^{(AD_1)}_1(\vec{\xi}_{1})
=
c_{\nu}^{(B)}({\bf U}(\vec{\xi}_{0}),{\bf U}(\vec{\xi}_{1}), \hat{\bar{\vec{{\bf a}}}}^1(\vec{\xi}_{\overline{0}}) )
\left. d\bfnc{\nu}_{0,1} \middle/ \mathcal{P}_{1,1}  \right.,
\end{array}
\end{equation}
with identical definitions in the other computational directions.  As discussed in Section~\ref{BNS},
the $\mu$, $\sigma$, and $\kappa$ coefficients in $\hat{\bar{{\bf f}}}^{(AD_1)}_l$ are directly proportional to the artificial viscosity,
$\bfnc{\mu}^{AD}$. At the flux points, the artificial viscosity coefficient is evaluated as the arithmetic average of the corresponding 
 $\bfnc{\mu}^{AD}$ values at the neighboring solution points.
The $\hat{\bar{{\bf f}}}^{(AD_1)}_{\hat{\bar{\sigma}},l}$ flux, which is proportional to $\hat{\bar{\bfnc{\sigma}}}_l$, is introduced to add the mass diffusion  to guarantee the positivity of density (see Section~\ref{subSec:PosDens} for further details).
\begin{lemma}
The $\hat{{\bf g}}^{(AD_1)}_l$, $\hat{\bar{{\bf f}}}^{(AD_1)}_{\hat{\bar{\sigma}},l}$ and 
$\hat{\bar{{\bf f}}}^{(AD_1)}_l$ terms 
given by Eq.~\eqref{1ST_ADFLUX} are entropy stable. 
\end{lemma}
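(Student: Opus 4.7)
The plan is to contract the first--order artificial dissipation contribution with $\mathbf{w}^\top \mathcal{P}$ and show that the resulting scalar is non--positive, which is exactly entropy stability at the semi--discrete level. By linearity and by the tensor--product structure of the scheme, it suffices to treat the $\xi^1$ direction; the $\xi^2,\xi^3$ directions are handled identically. The argument separates into a bulk (interior--edge) contribution from $\hat{\bar{\mathbf{f}}}^{(AD_1)}_1$ and $\hat{\bar{\mathbf{f}}}^{(AD_1)}_{\hat{\bar{\sigma}},1}$, and an interface contribution coming from $\hat{\mathbf{g}}^{(AD_1)}_1$.

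For the bulk terms I would proceed exactly as in the proof of Lemma~\ref{LEMMA_2PTECFLUX_FREE_and_SS}: factor out the transverse SBP weights $\widehat{\mathcal{P}}_{\perp,\xi^1}$ and telescope $\mathbf{w}^\top \Delta_{\xi^1} \hat{\bar{\mathbf{f}}}^{(AD_1)}_1$ by summation by parts, so that each interior flux point $\vec{\xi}_{\overline{i}}$ picks up the jump $(\mathbf{w}(\vec{\xi}_{i+1})-\mathbf{w}(\vec{\xi}_{i}))^\top$. Substituting the definition \eqref{1ST_ADFLUX}, I then rewrite the primitive jump $d\bfnc{\nu}_{i+1,i}$ using Lemma~\ref{lem:dvdw2pt} as $\nu_w(\mathbf{U}_i,\mathbf{U}_{i+1})(\mathbf{w}_{i+1}-\mathbf{w}_i)/\sqrt{J_i J_{i+1}}$. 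Invoking the factorization $c^{(B)} = c_\nu^{(B)}\nu_w$ from Lemma~\ref{BREN_2PT} collapses each interior edge contribution to
\begin{equation*}
-\frac{\widehat{\mathcal{P}}_{\perp,\xi^1}}{(\xi_{i+1}-\xi_i)\sqrt{J_i J_{i+1}}}\,(\mathbf{w}_{i+1}-\mathbf{w}_i)^\top c^{(B)}(\mathbf{U}_i,\mathbf{U}_{i+1},\hat{\bar{\vec{\mathbf{a}}}}^1(\vec{\xi}_{\overline{i}}))(\mathbf{w}_{i+1}-\mathbf{w}_i),
\end{equation*}
which is $\le 0$ because $c^{(B)}$ is symmetric positive definite (Lemma~\ref{BREN_2PT}), the quadrature weights and Jacobians are positive, and $\xi_{i+1}>\xi_i$. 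The $\hat{\bar{\mathbf{f}}}^{(AD_1)}_{\hat{\bar{\sigma}},1}$ contribution is handled by exactly the same chain with $\mu=\kappa=0$; the Cholesky decomposition \eqref{LDL_2PTBREN} still exhibits a non--negative diagonal (only the first entry survives, with factor $\rho_L \hat{\bar{\sigma}}_1 /R \ge 0$), so the quadratic form is positive semidefinite and the edge contribution remains $\le 0$.

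The remaining task is to handle the endpoints $\vec{\xi}_{\overline{0}}$ and $\vec{\xi}_{\overline{N}}$, where $\hat{\bar{\mathbf{f}}}^{(AD_1)}_1$ and $\hat{\bar{\mathbf{f}}}^{(AD_1)}_{\hat{\bar{\sigma}},1}$ vanish by construction in \eqref{1ST_ADFLUX}. Here the SAT--type penalty $\hat{\mathbf{g}}^{(AD_1)}_1$ supplies the coupling to the neighboring element state $\mathbf{U}(\vec{\xi}_0)$ through a jump $d\bfnc{\nu}_{0,1}$ weighted by $1/\mathcal{P}_{1,1}$. Treating the shared interface globally (summing the penalty contributions from both adjacent elements) and again applying Lemmas~\ref{lem:dvdw2pt} and \ref{BREN_2PT} produces the quadratic form
\begin{equation*}
-\frac{\widehat{\mathcal{P}}_{\perp,\xi^1}}{\mathcal{P}_{1,1}\sqrt{J_0 J_1}}(\mathbf{w}_1-\mathbf{w}_0)^\top c^{(B)}(\mathbf{U}_0,\mathbf{U}_1,\hat{\bar{\vec{\mathbf{a}}}}^1(\vec{\xi}_{\overline{0}}))(\mathbf{w}_1-\mathbf{w}_0) \le 0,
\end{equation*}
demonstrating that the interface penalty is dissipative as well.

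The main technical obstacle is the interface bookkeeping in the last step: one must verify that the factor $1/\mathcal{P}_{1,1}$ in $\hat{\mathbf{g}}^{(AD_1)}_1$ combines correctly with the $\mathcal{P}_{\perp,\xi^1}$ weight and with the symmetric penalty from the neighboring element to produce a single symmetric non--negative quadratic form across the shared face. Everything else is mechanical once Lemmas~\ref{lem:dvdw2pt} and \ref{BREN_2PT} are in hand, and the extension to the $\xi^2,\xi^3$ directions requires only a relabeling.
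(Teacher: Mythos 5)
Your proposal is correct and follows essentially the same route as the paper's (much terser) proof: contract the artificial dissipation terms with the entropy variables and invoke Lemma~\ref{BREN_2PT} together with the factorization $c^{(B)}=c_\nu^{(B)}\nu_w$ from Lemma~\ref{lem:dvdw2pt} to reduce every edge and interface contribution to a non-negative quadratic form in the entropy-variable jumps. The additional detail you supply — the SBP telescoping, the semidefinite (rather than definite) case for the $\hat{\bar{\sigma}}$ flux, and the pairing of the two penalty contributions across a shared face — is exactly the bookkeeping the paper leaves implicit.
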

\begin{proof}
Contracting the corresponding terms in Eq.~(\ref{semiDiscCurvi1stOrder}) with the entropy variables, the result follows directly from Lemma~\ref{BREN_2PT},
taking into account that the artificial dissipation matrices are SPD.
\end{proof}

\subsection{First-order Merriam--Roe flux}
\label{MR_ROE}
Recall that the inviscid term $\hat{\bar{{\bf f}}}^{(in)}_l$ of Eq.~\eqref{semiDiscCurvi1stOrder} is written as $\hat{\bar{{\bf f}}}^{(in)}_l = \hat{\bar{{\bf f}}}^{(EC)}_l - \hat{\bar{{\bf f}}}^{(ED)}_l$.  
In this section, we construct the entropy dissipative flux $\hat{\bar{{\bf f}}}^{(ED)}_l$.
Often, two-point entropy conservative fluxes are stabilized through the use of Rusanov-type fluxes (e.g., see   \cite{Zhang,ZhangShu}). Note, however, that the Rusanov flux  dissipates each characteristic wave equally regardless of the magnitude of the corresponding eigenvalue associated with this wave, thus making it too dissipative. A less dissipative and more refined approach is to use an entropy dissipative characteristic flux proposed by Merriam in \cite{MER}, which is herein referred to as the Merriam--Roe (MR) flux and given by 
\begin{align} 
\label{2PTROEGENERAL}
{\bm f}^{(MR)}({\bm u}_1,{\bm u}_2,\vec{\bm n}) 
&=
\bar{{ f}}_{(S)}({\bm u}_1,{\bm u}_2)\vec{\bm n} 
-  
M^{\mathcal{Y}}({\bm u}_1,{\bm u}_2,\vec{\bm n}) \Delta {\bm w},
\end{align}  
where $\bar{{ f}}_{(S)}(\cdot,\cdot)$ is any two-point, consistent, entropy conservative inviscid flux, $\Delta{\bm w} = {\bm w}_2 - {\bm w}_1$, and $M^{\mathcal{Y}}({\bm u}_1,{\bm u}_2,\vec{\bm n})$ is a two-point consistent average of the matrix $\frac{1}{2}\mathcal{Y}|\lambda|\mathcal{Y}^{T} $. The matrix  $\mathcal{Y}$ is a matrix composed out of normalized eigenvectors of the flux Jacobian 
${\bm f}_{{\bf W}}({\bf W},\vec{\bm n}) = {\bm f}_{\bf U}({\bf U},\vec{\bm n}) \frac{\partial {\bf U}}{\partial{\bf W}}$, which can be decomposed as follows:
\begin{equation} 
\label{2PTROEGENERAL2}
\begin{split}
&
f'({\bf W},\vec{\bm n}) 
= 
\mathcal{Y} 
\lambda
\mathcal{Y}^{T},
\, \,
\frac{\partial \bm{U}}{\partial \bm{W}} 
= 
\mathcal{Y}  \mathcal{Y}^{T},
\end{split}
\end{equation}  
where 
$
\lambda 
= 
{\rm diag}
\left(
\begin{array}{c c c c c}
-c \| \vec{\bm n} \|  + \vec{\bfnc{V}} \cdot \vec{\bm n}
&
c \| \vec{\bm n} \|  + \vec{\bfnc{V}} \cdot \vec{\bm n}
&
\vec{\bfnc{V}} \cdot \vec{\bm n}
&
\vec{\bfnc{V}} \cdot \vec{\bm n}
&
\vec{\bfnc{V}} \cdot \vec{\bm n}
\end{array}
\right)
$
and $\mathcal{Y}$ can be found in \cite{Fthesis}. 
 The matrix $M^{\mathcal{Y}}({\bm u}_1,{\bm u}_2,\vec{\bm n})$ is SPSD if the density and temperature values used to build the matrix are positive.  For two admissible states ${\bm u}_1$ and ${\bm u}_2$, there are many options for building $M^{\mathcal{Y}}({\bm u}_1,{\bm u}_2,\vec{\bm n})$ at an interface.  In the present analysis, we use the following average:
\begin{equation}
\label{AVGVROE}
{\bm \nu}({\bm u}_1,{\bm u}_2) = 
\left[ 
\begin{array}{ccc}
\rho_L                              &
\frac{\vec{\bfnc{V}}({\bm u}_1)T_2 + \vec{\bfnc{V}}({\bm u}_2) T_1}{T1 + T2}     &
T_H                                 
\end{array}
\right]^\top,
\end{equation}
where $\vec{\bfnc{V}}({\bm u}_1)$ is the velocity vector of ${\bm u}_1$.
With this average, we can write the first component of $M^{\mathcal{Y}}({\bm u}_1,{\bm u}_2,\vec{\bm n}) \Delta {\bm w}$ in a form that facilitates proving the positivity of density:
\begin{equation}
\label{ROEDENSCONTRIB}
\begin{array}{ll}
&
\left(
M^{\mathcal{Y}}({\bm u}_1,{\bm u}_2,\vec{\bm n}) 
\Delta {\bm w}\right)_{\rho} 
= 
\rho_L \mathcal{V}({\bm u}_1, {\bm u}_2,\vec{\bm n})  
+ 
\Delta\rho\lambda_c ,
\\
&
\mathcal{V}({\bm u}_1, {\bm u}_2,\vec{\bm n}) 
=  
-
\left(
\frac{\Delta(\log T)}{\gamma-1}  
+
\frac{
\Delta T 
\left \| \Delta\vec{\bfnc{V}} \right \|^2
}
{8 R_g T_A^2}
\right)\lambda_c 
\\
&
+ 
\frac{\Delta T}{4T_A(\gamma-1)}(\lambda_2 + \lambda_3)  
+ 
(\lambda_3-\lambda_2)
\frac{\Delta \vec{\bfnc{V}} 
\cdot 
\frac{\vec{\bm n}}{\left \| \vec{\bm n} \right \|}
\sqrt{T_H}
}
{2T_A\sqrt{R_g \gamma}},
\end{array}
\end{equation}
where 
$\lambda_1 = |\vec{\bfnc{V}}_{avg}\cdot \vec{\bm n}|$, 
$\lambda_2 
= 
|\vec{\bfnc{V}}_{avg}\cdot \vec{\bm n}
-
c_{avg} \left \| \vec{\bm n} \right \| |$, 
$\lambda_3 
= 
|\vec{\bfnc{V}}_{avg}\cdot \vec{\bm n}
+
c_{avg} \left \| \vec{\bm n} \right \| 
|
$,
$\lambda_c = \frac{\lambda_1(\gamma-1)}{2\gamma} + \frac{\lambda_2+\lambda_3}{4\gamma}$, 
$\vec{\bfnc{V}}_{avg} =  
\frac{\vec{\bfnc{V}}_1T_2 + \vec{\bfnc{V}}_2 T_1}{T1 + T2}   $
, $c_{avg} = \sqrt{R_gT_H\gamma}$, and
$\Delta T = T_2 - T_1$.   

For all fixed $1 \leq j,l \leq N$ and $\vec{\xi}_{i} = \vec{\xi}_{ijl}$,
the $ \hat{\bar{{\bf f}}}^{(ED)}_l$ term in the $\hat{\bar{{\bf f}}}^{(in)}_l$ flux in Eq. \eqref{semiDiscCurvi1stOrder} is defined as follows:  
\begin{equation}
\label{MERRIAM_CONTRIB_INTERIOR}
\hat{\bar{{\bf f}}}^{(ED)}_1(\vec{\xi}_{\overline{i}})
 = 
M^{\mathcal{Y}}({\bf U}(\vec{\xi}_{i}),{\bf U}(\vec{\xi}_{i+1}), \hat{\bar{\vec{{\bf a}}}}^1(\vec{\xi}_{\overline{i}}) )
\left (
{\bf w}(\vec{\xi}_{i+1})
-
{\bf w}(\vec{\xi}_{i})
\right ), \ 1 \leq i \leq N-1
\end{equation}
and 
$
\hat{\bar{{\bf f}}}^{(ED)}_1(\vec{\xi}_{\overline{0}})
=
\hat{\bar{{\bf f}}}^{(ED)}_1(\vec{\xi}_{\overline{N}})
=
{\bm 0}
$
with the identical definitions in the other computational directions.

\subsection{Positivity of density}
\label{subSec:PosDens}
We now discuss how to guarantee the positivity of density for the first-order scheme given by Eq. \eqref{semiDiscCurvi1stOrder}. 
\begin{theorem}
\label{POSDENSTHM}
Assume that the explicit Euler discretization in time is used for the scheme given by Eq.~\eqref{semiDiscCurvi1stOrder} and  $\hat{\bar{{\bf f}}}^{(in)}_l, l=1,2,3$ are some consistent inviscid interface fluxes whose  first components 
can be written as: 
$\hat{\bar{\bm f}}^{\rho\pm}_l = \hat{\bar{m}}^{\pm}_l	- 
 \mathscr{D}^{\pm}_l \Delta^{\pm}_l\rho$ where 
$\Delta^{+}_1\rho= \rho_{i+1jk}-\rho_{ijk}$,
$\Delta^{-}_1\rho= \rho_{ijk}-\rho_{i-1jk}$,
etc.  Let $\rho^{+}_{1,A} = \frac{\rho_{ijk} + \rho_{i+1jk}}{2}$ and $\rho^{-}_{1,A} = \frac{\rho_{ijk} + \rho_{i-1jk}}{2}$ with the identical definitions in other directions. If 
$\mathscr{D}^{\pm}_l 
\geq 
\mathscr{D}^{\pm}_{l,\min}
=
\frac{|\hat{\bar{m}}^{\pm}_l|}{2\rho^{\pm}_{l,A}} $,
 then this first--order FV scheme ( Eq.~\eqref{semiDiscCurvi1stOrder}) preserves the positivity of the density $\rho$ under the following CFL condition: 
\begin{equation}
\label{CFLDENSPOS_SOLNPT}
\tau <
\frac{J_{ijk}}
{2 \sum\limits_{l=1}^{3}
\frac{\mathscr{D}^{+}_l + \mathscr{D}^{-}_l}{P_{ll}}}
=
\tau_{\rho}.
\end{equation}     
\end{theorem}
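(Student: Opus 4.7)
The plan is to view the forward-Euler update of $\rho_{ijk}$ as a linear combination of the seven densities at $(i,j,k)$ and its six nearest neighbours, and to show that under the hypothesised diffusion lower bound and CFL condition every coefficient in that combination is non-negative. Because the first component of the physical viscous flux vanishes and the artificial-dissipation fluxes $\hat{\bar{{\bf f}}}^{(AD_1)}_{\hat{\bar{\sigma}},l}$, $\hat{\bar{{\bf f}}}^{(AD_1)}_l$ and penalties $\hat{{\bf g}}^{(AD_1)}_l$ have Brenner-type viscosity matrices (Lemma~\ref{BREN_2PT}) whose first row couples only to $\Delta\rho$ with a non-negative coefficient, these terms only inflate the effective $\mathscr{D}^{\pm}_l$ and make the hypothesis easier; it therefore suffices to analyse the inviscid FV update on its own.

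First I write the explicit-Euler density equation at an interior solution point and substitute the assumed decomposition $\hat{\bar{{\bm f}}}^{\rho,\pm}_l = \hat{\bar{m}}^{\pm}_l - \mathscr{D}^{\pm}_l\Delta^{\pm}_l\rho$. The key algebraic step is the identity
\begin{equation*}
\hat{\bar{m}}^{\pm}_l \;=\; \alpha^{\pm}_l\bigl(\rho_{ijk}+\rho_{(i\pm 1)jk}\bigr),
\qquad \alpha^{\pm}_l \;:=\; \hat{\bar{m}}^{\pm}_l\big/(2\rho^{\pm}_{l,A}),
\end{equation*}
which is nothing more than the definition of the arithmetic mean $\rho^{\pm}_{l,A}$ and allows me to re-express the entire update as an explicit combination of $\rho_{ijk}$ and its six neighbours. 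Grouping coefficients, the neighbour $\rho_{(i\pm 1)jk}$ acquires the factor $\tau(\mathscr{D}^{\pm}_l\mp\alpha^{\pm}_l)/P_{ll}$, while $\rho_{ijk}$ carries $J_{ijk}-\tau\sum_{l=1}^{3}(\mathscr{D}^{+}_l+\mathscr{D}^{-}_l+\alpha^{+}_l-\alpha^{-}_l)/P_{ll}$, with analogous $j$- and $k$-direction shifts.

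Non-negativity of the six neighbour coefficients is then immediate from the hypothesis $\mathscr{D}^{\pm}_l \geq |\hat{\bar{m}}^{\pm}_l|/(2\rho^{\pm}_{l,A}) = |\alpha^{\pm}_l|$. For the self-coefficient, the same bound gives the worst-case estimate $\mathscr{D}^{+}_l+\mathscr{D}^{-}_l+\alpha^{+}_l-\alpha^{-}_l \leq 2(\mathscr{D}^{+}_l+\mathscr{D}^{-}_l)$, so the self-coefficient is bounded below by $J_{ijk}-2\tau\sum_l(\mathscr{D}^{+}_l+\mathscr{D}^{-}_l)/P_{ll}$, which is strictly positive precisely when $\tau<\tau_\rho$. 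With all seven coefficients non-negative and every input density strictly positive, $\rho^{n+1}_{ijk}>0$. At element-face solution points the same argument applies verbatim: the missing cross-interface neighbour contribution is supplied by the Roe-type penalty $\hat{{\bf g}}^{(AD_1)}_l$, which adds only further non-negative mass diffusion.

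The main obstacle is not any single inequality but the algebraic bookkeeping: recognising that the arithmetic mean $\rho^{\pm}_{l,A}$ is the unique average that lets $\hat{\bar{m}}^{\pm}_l$ be written as a linear combination of $\rho_{ijk}$ and $\rho_{(i\pm 1)jk}$ with weights whose magnitude is controlled by exactly $|\hat{\bar{m}}^{\pm}_l|/(2\rho^{\pm}_{l,A})$, and verifying that the worst-case sign alignment of $\alpha^{+}_l$ versus $-\alpha^{-}_l$ is precisely what produces the factor of $2$ appearing in the denominator of $\tau_\rho$.
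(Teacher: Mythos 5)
Your proposal is correct and follows essentially the same route as the paper: the paper splits the update into six interface terms of the form $\tfrac{\hat{\rho}^n}{6}-\tfrac{\tau}{P_{ll}}(\hat{\bar{m}}^{\pm}_l-\mathscr{D}^{\pm}_l\Delta^{\pm}_l\rho)$ and bounds each below by $\rho^n\bigl[\tfrac{J_{ijk}}{6}-\tfrac{2\tau\mathscr{D}^{\pm}_l}{P_{ll}}\bigr]$ using $|\hat{\bar{m}}^{\pm}_l|/\mathscr{D}^{\pm}_l\leq 2\rho^{\pm}_{l,A}$ together with $2\rho^{\pm}_{l,A}\mp\Delta^{\pm}_l\rho=2\rho_{ijk}$, which is exactly your convex-combination bookkeeping in a slightly different packaging. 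The only cosmetic difference is that your version makes the non-negativity of the neighbour coefficients explicit, whereas the paper absorbs the neighbour density into the bound on the self-coefficient; both yield the same factor of two in $\tau_\rho$.
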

\begin{proof}
Let $J_{ijk} = {\bf J}(\vec{\xi}_{ijk})$, $\rho_{ijk} = \bfnc{\rho}(\vec{\xi}_{ijk})$, and ${\bf U}_{ijk} = {\bf U}(\vec{\xi}_{ijk})$.  
Consider  the update of the density $J_{ijk}\rho_{ijk} = \hat{\rho}$ at the solution point $\vec{\xi}_{ijk}$, which depends only on the nearest neighbors.  With the following notation for the interface fluxes
$\hat{\bar{\bm f}}^{\rho}_1({\bf U}_{ijk},{\bf U}_{i+1jk})
=
\hat{\bar{\bm f}}^{\rho+}_1
$
,
$\hat{\bar{\bm f}}^{\rho}_1({\bf U}_{ijk},{\bf U}_{i-1jk})
=
\hat{\bar{\bm f}}^{\rho-}_1
$
and similarly in other directions, we have 
\begin{equation}
\label{FVDENS}
\begin{split}
&\hat{\rho}^{n+1} = \hat{\rho}^n 
- 
\tau
\left[
\frac{
\hat{\bar{\bm f}}^{\rho+}_1
- 
\hat{\bar{\bm f}}^{\rho-}_1
}
{P_{ii}}
+
\frac{
\hat{\bar{\bm f}}^{\rho+}_2
- 
\hat{\bar{\bm f}}^{\rho-}_2
}
{P_{jj}}
+
\frac{
\hat{\bar{\bm f}}^{\rho+}_3
- 
\hat{\bar{\bm f}}^{\rho-}_3
}
{P_{kk}}
\right].
\end{split}
\end{equation}
The above density equation can be split as follows:
\begin{equation}
\begin{split}
&\hat{\rho}^{n+1} =
\left(
\frac{\hat{\rho}^n}{6}
- 
\tau
\frac{
\hat{\bar{\bm f}}^{\rho+}_1
}
{P_{ii}}
\right)
+
\left(
\frac{\hat{\rho}^n}{6}
+
\tau
\frac{
\hat{\bar{\bm f}}^{\rho-}_1
}
{P_{ii}}
\right)
+
\ldots
\end{split}
\end{equation}
Since all terms in the above equation have the same structure, we only consider the first term:
$$
\left(
\frac{\hat{\rho}^n}{6}
- 
\tau
\frac{
\hat{\bar{\bm f}}^{\rho+}_1
}
{P_{ii}}
\right)
= 
\frac{\hat{\rho}^n}{6} 
- 
\frac{\tau}{P_{ii}}
(\hat{\bar{m}}^{+}_1	
- 
\mathscr{D}^{+}_1
\Delta^{+}_1\rho).
$$  
We now consider two cases: 1)  $\mathscr{D}^{+}_1 = 0$ and 2)  $\mathscr{D}^{+}_1 > 0$.
If $\mathscr{D}^{+}_1 = 0$, then taking into account that 
$\mathscr{D}^{+}_1 \geq \frac{|\hat{\bar{m}}^{+}_1|}{2\rho^+_{1,A}} $,
we have 
$
\left(
\frac{\hat{\rho}^n}{6}
- 
\tau
\frac{
\hat{\bar{\bm f}}^{\rho+}_1
}
{P_{ii}}
\right) = \frac{\hat{\rho}^n}{6}  
=
\rho^n
\left[
\frac{J_{ijk}}{6} 
- 
\frac{2\tau \mathscr{D}^{+}_1}{P_{ii}}
\right]
$.  
Now, assume that $\mathscr{D}^{+}_1 > 0$ which yields 
\begin{equation}
\label{oneFace_POSDENSTHM}
\begin{split}
\frac{\hat{\rho}^n}{6} 
- 
\frac{\tau}{P_{ii}}
(\hat{\bar{m}}^{+}_1	
- 
\mathscr{D}^{+}_1
\Delta^{+}_1\rho)
&=
\frac{\hat{\rho}^n}{6} 
- 
\frac{\tau \mathscr{D}^{+}_1}{P_{ii}}
\left(
\frac{\hat{\bar{m}}^{+}_1}{\mathscr{D}^{+}_1}
- 
\Delta^{+}_1\rho
\right)
\\
&
\geq
\frac{\hat{\rho}^n}{6} 
- 
\frac{\tau \mathscr{D}^{+}_1}{P_{ii}}
\left(
2\rho^+_{1,A}
- 
\Delta^{+}_1\rho
\right)
\\&
=
\rho^n
\left[
\frac{J_{ijk}}{6} 
- 
\frac{2\tau \mathscr{D}^{+}_1}{P_{ii}}
\right]
.
\end{split}
\end{equation}
Summing over all element interfaces, we have
\begin{equation}
\label{sumFaces_POSDENSTHM}
\begin{split}
\hat{\rho}^{n+1}
\geq
\rho^n
\left[
J_{ijk}
- 
2\tau
\sum\limits_{l=1}^{3}
\frac{\mathscr{D}^{+}_l + \mathscr{D}^{-}_l}{P_{ll}}
\right]
>
0.
\end{split}
\end{equation}
\end{proof}

We now give two examples demonstrating how Theorem~\ref{POSDENSTHM} can be used to preserve density positivity of the scheme given by Eq. \eqref{semiDiscCurvi1stOrder} when only the minimum mass diffusion is used i.e. 
$\mathscr{D}^{\pm}_l = \mathscr{D}^{\pm}_{l,\min} = \frac{|\hat{\bar{m}}^{\pm}_l|}{2\rho^{\pm}_{l,A}} $.

\subsubsection{Positivity of density:  Ismail--Roe EC flux}
Assume that  $\hat{\bar{{\bf f}}}^{(in)}_l = \hat{\bar{{\bf f}}}^{(EC)}_l$, where $\bar{{ f}}_{(S)}(\cdot,\cdot)$ in Eq.~\eqref{1ST_ECFLUX} is the entropy conservative flux of Ismail and Roe \cite{IR}. Therefore, 
$\hat{\bar{m}}^{\pm}_l = 
\gamma 
(\rho c)_L 
\left( \frac{\vec{\bfnc{V}}}{c} \right)_A \cdot \vec{\bm n}$, thus leading to
\begin{equation}
\begin{array}{l}
\mathscr{D}^{\pm}_{l,\min}
=
\frac{
|\gamma 
(\rho c)_L 
\left( \frac{\vec{\bfnc{V}}}{c} \right)_A \cdot \vec{\bm n}|}
{2\rho_A}
\leq
\frac{
\gamma 
(\rho c)_A}
{2\rho_A}
\left| \left( \frac{\vec{\bfnc{V}}}{c} \right)_A \cdot \vec{\bm n} \right |
\leq
\gamma 
c_A
\left| \left( \frac{\vec{\bfnc{V}}}{c} \right)_A \cdot \vec{\bm n} \right |,
\end{array}
\end{equation}
where $c$ is the speed of sound. Note that for $\mathscr{D}^{\pm}_{l,\min}$ presented above,  the time step constraint given by Theorem~\ref{POSDENSTHM} is comparable with that of the conventional CFL condition.

\subsubsection{Positivity of density:  Merriam--Roe flux}
We now use Theorem~\ref{POSDENSTHM} to prove density positivity when the Merriam--Roe flux is used in Eq.~\eqref{semiDiscCurvi1stOrder}.
\begin{corollary}
\label{COR:posDensMassDiff_1stOrder}
Assume that $\hat{\bar{{\bf f}}}^{(in)}_l$ in Eq.~\eqref{semiDiscCurvi1stOrder} is the EC flux of Chandrashekar \cite{Chand}. Let $\hat{\bar{\vec{{\bf a}}}}^l_{\pm}$ be the metric term at the ``$\pm$'' interface in the $l$-th direction.  
If the explicit Euler discretization in time is used in Eq.~\eqref{semiDiscCurvi1stOrder}, then this 1st--order FV scheme preserves the positivity of density under the time step constraint given by Eq.~\eqref{CFLDENSPOS_SOLNPT} with
\begin{equation}
\label{posDensMassDiff_1stOrder}
\left[
\lambda_c + 
\frac{\sigma \| \hat{\bar{\vec{{\bf a}}}} \|^2 }
{J_G \Delta \xi}
\right]^{\pm}_l
=
\mathscr{D}^{\pm}_l 
\geq 
\mathscr{D}^{\pm}_{l,\min}
\\
=
\frac{\rho^{\pm}_{l,L}}{2\rho^{\pm}_{l,A}}
\left
|
\vec{\bfnc{V}}_A \cdot \hat{\bar{\vec{{\bf a}}}}^l_{\pm}
-
\mathcal{V}({\bm u}, {\bm u}^{\pm}_l ,\hat{\bar{\vec{{\bf a}}}}^l_{\pm})
\right
|,
\end{equation} 
 and the following constraint on $\sigma^{\pm}_l$:
\begin{equation}
\label{posDensMassDiff_1stOrder_SigmaConstraint}
\begin{split}
\sigma^{\pm}_l 
\geq 
\sigma^{\pm}_{l,\min}
=
\left[
\max
\left(
0
,
\frac{\rho_L}{2\rho_A}
\left
|
\vec{\bfnc{V}}_A \cdot \hat{\bar{\vec{{\bf a}}}}
-
\mathcal{V}({\bm u}, {\bm u}^{\pm}_l ,\hat{\bar{\vec{{\bf a}}}})
\right
|
- \lambda_c
\right)
\frac{J_G \Delta \xi}{\| \hat{\bar{\vec{{\bf a}}}} \|^2}
\right]^{\pm}_l.
\end{split}
\end{equation}
\end{corollary}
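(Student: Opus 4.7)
The strategy is to invoke Theorem~\ref{POSDENSTHM} by decomposing the density row of the combined interface flux into a ``mean flux'' $\hat{\bar{m}}^{\pm}_l$ plus a diffusive piece $-\mathscr{D}^{\pm}_l \Delta^{\pm}_l\rho$, and then reading off from $\mathscr{D}^{\pm}_l \geq |\hat{\bar{m}}^{\pm}_l|/(2\rho^{\pm}_{l,A})$ the minimal admissible $\sigma^{\pm}_l$. Concretely, I would identify the three contributions to the density flux at the ``$\pm$'' interface in direction $l$: (i) the density row of Chandrashekar's EC flux $\hat{\bar{{\bf f}}}^{(EC)}_l$ of Eq.~\eqref{1ST_ECFLUX}, which evaluates to $\rho_L\bigl(\vec{\bfnc{V}}_A \cdot \hat{\bar{\vec{{\bf a}}}}^l_{\pm}\bigr)$; (ii) the density row of the Merriam--Roe dissipation $\hat{\bar{{\bf f}}}^{(ED)}_l$ of Eq.~\eqref{MERRIAM_CONTRIB_INTERIOR}, for which Eq.~\eqref{ROEDENSCONTRIB} already supplies the split $\rho_L \mathcal{V}({\bm u},{\bm u}^{\pm}_l,\hat{\bar{\vec{{\bf a}}}}^l_{\pm}) + \Delta\rho\,\lambda_c$; and (iii) the density row of the mass-diffusion flux $\hat{\bar{{\bf f}}}^{(AD_1)}_{\hat{\bar{\sigma}},l}$, which, by the first row of $c^{(B)}_\nu$ in Eq.~\eqref{2PTBREN_PRIMMAT} with $\mu=\kappa=0$ together with the definition $d\bfnc{\nu}_{i+1,i}=(\bfnc{\nu}_{i+1}-\bfnc{\nu}_i)/\sqrt{J_i J_{i+1}}$, collapses to $\sigma^{\pm}_l \| \hat{\bar{\vec{{\bf a}}}}^l_{\pm} \|^2 \Delta\rho/(J_G \Delta\xi)$. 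Summing (i)--(iii) with the signs prescribed by Eq.~\eqref{semiDiscCurvi1stOrder} isolates $\hat{\bar{m}}^{\pm}_l = \rho^{\pm}_{l,L}\bigl(\vec{\bfnc{V}}_A\cdot\hat{\bar{\vec{{\bf a}}}}^l_{\pm} - \mathcal{V}({\bm u},{\bm u}^{\pm}_l,\hat{\bar{\vec{{\bf a}}}}^l_{\pm})\bigr)$ and $\mathscr{D}^{\pm}_l = \lambda_c + \sigma^{\pm}_l\|\hat{\bar{\vec{{\bf a}}}}^l_{\pm}\|^2/(J_G\Delta\xi)$.

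With this split in hand, the diffusivity hypothesis of Theorem~\ref{POSDENSTHM} becomes exactly Eq.~\eqref{posDensMassDiff_1stOrder}. Solving that inequality for $\sigma^{\pm}_l$ and enforcing the natural constraint $\sigma^{\pm}_l\geq 0$ (the artificial viscosity cannot be negative) then produces the $\max(0,\cdot)$ formula in Eq.~\eqref{posDensMassDiff_1stOrder_SigmaConstraint}. With these lower bounds on $\sigma^{\pm}_l$ enforced at every ``$\pm$'' interface, Theorem~\ref{POSDENSTHM} applies directly and yields positivity of the density under the CFL condition~\eqref{CFLDENSPOS_SOLNPT}.

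The main obstacle I expect is the algebraic bookkeeping in steps (i)--(iii), in particular verifying that the $\rho_L$ factor pulls cleanly out of the combined ``mean'' contribution $\rho_L \vec{\bfnc{V}}_A\cdot\hat{\bar{\vec{{\bf a}}}}^l_{\pm} - \rho_L \mathcal{V}$, since it is precisely this factorization that produces the ratio $\rho_L/(2\rho_A)$ in Eq.~\eqref{posDensMassDiff_1stOrder} rather than some less tractable interface average. One should also double-check that any additional artificial-dissipation mass flux carried by $\hat{\bar{{\bf f}}}^{(AD_1)}_l$, if not absorbed into the same $\sigma^{\pm}_l$, contributes only to the diffusive piece with a non-negative coefficient and hence can only help positivity. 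Beyond these sign-tracking checks, the corollary follows by direct substitution into Theorem~\ref{POSDENSTHM}.
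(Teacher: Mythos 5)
Your proposal is correct and follows essentially the same route as the paper: the paper's own proof is the single line ``follows directly from Theorem~\ref{POSDENSTHM},'' and your explicit bookkeeping --- reading off $\hat{\bar{m}}^{\pm}_l = \rho^{\pm}_{l,L}\bigl(\vec{\bfnc{V}}_A\cdot\hat{\bar{\vec{{\bf a}}}}^l_{\pm} - \mathcal{V}\bigr)$ from the Chandrashekar mass flux plus the split in Eq.~\eqref{ROEDENSCONTRIB}, and $\mathscr{D}^{\pm}_l = \lambda_c + \sigma\|\hat{\bar{\vec{{\bf a}}}}\|^2/(J_G\Delta\xi)$ from the $\lambda_c$ term plus the first row of $c^{(B)}_\nu$ with $\mu=\kappa=0$ --- is precisely the identification that the paper leaves implicit in the statement of Eq.~\eqref{posDensMassDiff_1stOrder}.
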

\begin{proof}
The proof follows directly from Theorem~\eqref{POSDENSTHM}.
\end{proof}

\subsection{Positivity of internal energy }
\label{TEMPPOSCONDITION}
If the explicit first-order Euler scheme is used to advance the solution in time, i.e. 
\begin{equation}
\label{explicitE}
\hat{{\bf U}}^{n+1} = \hat{{\bf U}}^{n} + \tau\hat{{\bf U}}_t,
\end{equation}
so that $\tau$ is in the interval that preserves the positivity of $\bfnc{\rho}^{n+1}(\vec{\xi}_{ijk})$, then the positivity of the internal energy at the time level $n+1$ at the solution point $\vec{\xi}_{ijk}$ is solely determined by the following quadratic polynomial in $\tau$:
\begin{equation}
\label{TEMPPOLY}
\text{IE}({\bm u}^{n+1})\rho^{n+1} 
=
\left(\frac{\tau}{J}\right)^2
\left(
\frac{dE}{dt}\frac{d\rho}{dt}-\frac{1}{2} 
\left \| \frac{d{\bm m}}{dt} \right \|^2
\right) 
+ 
\frac{\tau}{J}
\left({{\bm u}}^{n}\right)^\top \left[ 
\begin{array}{l}
\phantom{-} \frac{dE}{dt}     \\
-\frac{d{\bm m}}{dt}    \\
\phantom{-} \frac{d\rho}{dt}  \\
\end{array}
\right] + \text{IE}({\bm u}^{n})\rho^{n},
\end{equation}
where $\hat{{\bf U}}^n(\vec{\xi}_{ijk}) = J {\bm u}^n$,
${\bf J}(\vec{\xi}_{ijk}) = J$,
$\bfnc{\rho}^{n+1}(\vec{\xi}_{ijk}) = \rho^{n+1}$,
$
\hat{{\bf U}}_t(\vec{\xi}_{ijk}) = 
\left[
\frac{d\rho}{dt}, \frac{d{\bm m}}{dt}, \frac{dE}{dt}
\right]^\top
$
and $\text{IE}({\bm u}^{n})$ is the internal energy of ${\bm u}^{n}$. Note that Eq.~(\ref{TEMPPOLY}) holds for any spatial discretization.
Using Eq.~(\ref{TEMPPOLY}), we now prove the following lemma.
\begin{lemma}
\label{ZEROROOTBOUND}
Let the discrete solution at the time level $n$ be in the admissible set, so that
$\bfnc{\rho}^n(\vec{\xi}_{ijk}),\text{IE}({\bf U}^{n}(\vec{\xi}_{ijk}))>0$ for all solution points in the domain.  
Then, there exists $ \tau^{\min} \in (0, \tau^\rho]$, where  $\tau^{\rho}$ is given by Eq.~(\ref{CFLDENSPOS_SOLNPT}), such that  for all $\tau$: $0 < \tau < \tau^{\min}$, the 1st--order FV scheme given by Eqs.~(\ref{semiDiscCurvi1stOrder}) and (\ref{explicitE}) preserves the positivity of internal energy, i.e., $\text{IE}({\bf U}^{n+1}(\vec{\xi}_{ijk}))>0$ at every solution point.
\end{lemma}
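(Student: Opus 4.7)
The plan is to exploit the quadratic polynomial structure of Eq.~(\ref{TEMPPOLY}) in the time step $\tau$, combined with the density positivity already established by Theorem~\ref{POSDENSTHM}. Denote by $p(\tau)\equiv\text{IE}({\bm u}^{n+1})\rho^{n+1}$ the right-hand side of Eq.~(\ref{TEMPPOLY}); this is a quadratic in $\tau$ whose constant term is $\text{IE}({\bm u}^n)\rho^n$, which is strictly positive by the admissibility hypothesis placed on the time-level-$n$ state.

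First I would appeal to continuity of $p$ at $\tau=0$: since $p(0)=\text{IE}({\bm u}^n)\rho^n>0$, there must exist a strictly positive number $\tau^\star$ such that $p(\tau)>0$ for every $\tau\in[0,\tau^\star)$. A sharper, more constructive choice is to let $\tau^\star$ denote the smallest strictly positive real root of $p$ when one exists, and set $\tau^\star=+\infty$ otherwise (in particular when $p$ collapses to a positive constant). Because $p(0)>0$, the interval $[0,\tau^\star)$ is automatically nonempty.

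Next I would define $\tau^{\min}\equiv\min(\tau^\star,\tau^\rho)\in(0,\tau^\rho]$. For every $\tau$ with $0<\tau<\tau^{\min}$, Theorem~\ref{POSDENSTHM} applied at $\vec{\xi}_{ijk}$ guarantees $\rho^{n+1}(\vec{\xi}_{ijk})>0$, while the choice of $\tau^\star$ yields $p(\tau)>0$. Dividing the latter by the former gives $\text{IE}({\bf U}^{n+1}(\vec{\xi}_{ijk}))>0$, which is precisely the conclusion of the lemma.

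The only point that requires a little care is verifying $\tau^\star>0$ in every configuration of the coefficients of $p$. This is a short case split on the leading coefficient $a$ and the linear coefficient $b$ of the quadratic: if $a=b=0$ then $p\equiv c>0$ and $\tau^\star=+\infty$; if $a=0$ and $b\neq 0$ then $p$ is linear with $p(0)>0$, so either $\tau^\star=+\infty$ (when $b\geq 0$) or $\tau^\star=-c/b>0$ (when $b<0$); and if $a\neq 0$ the positivity of $c=p(0)$ and Vieta's formulas force any real roots of $p$ to share a common sign, so the smallest positive root (if it exists) is strictly positive. Hence $\tau^\star>0$ in every case and the conclusion follows. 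The main ``obstacle'' is therefore conceptual rather than technical: one must recognise that the quadratic representation (\ref{TEMPPOLY}) reduces the question to an elementary continuity argument at $\tau=0$, after which coupling with the density-positivity bound of Theorem~\ref{POSDENSTHM} completes the proof.
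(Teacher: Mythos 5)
Your argument is correct and is essentially the paper's own proof: both exploit the quadratic form of Eq.~(\ref{TEMPPOLY}), the strict positivity of its constant term $\text{IE}({\bm u}^{n})\rho^{n}$, the dichotomy between ``no positive root'' and ``smallest positive root,'' and the coupling with Theorem~\ref{POSDENSTHM} to divide out $\rho^{n+1}>0$. The only cosmetic omission is that your $\tau^\star$ is defined pointwise, so to obtain a single $\tau^{\min}$ valid at \emph{every} solution point you must take the minimum over the finitely many points, i.e.\ $\tau^{\min}=\min\bigl(\tau^{\rho},\min_{ijk}\tau^\star(\vec{\xi}_{ijk})\bigr)$, exactly as the paper does.
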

\begin{proof}
Since for all solution points $\text{IE}({\bf U}^{n}(\vec{\xi}_{ijk}))\bfnc{\rho}^{n}(\vec{\xi}_{ijk}) > 0$, the above quad-ratic trinomial in $\tau$ is either strictly positive, i.e., $\text{IE}({\bf U}^{n+1}(\vec{\xi}_{ijk}))\bfnc{\rho}^{n+1}(\vec{\xi}_{ijk})>0$, $\forall \tau>0$ 
(thus imposing no time step constraint for positivity of temperature), 
or there exists the minimum positive root $\bfnc{\tau}^{\min}(\vec{\xi}_{ijk})$ of the following quadratic equation: $\text{IE}({\bf U}(\vec{\xi}_{ijk})^{n+1})\bfnc{\rho}(\vec{\xi}_{ijk})^{n+1}=0$, for which the positivity of $\text{IE}({\bf U}(\vec{\xi}_{ijk})^{n+1})\bfnc{\rho}(\vec{\xi}_{ijk})^{n+1}$ is guaranteed for all $\tau < \bfnc{\tau}^{\min}(\vec{\xi}_{ijk})\le \tau_{\rho}$.  Hence, for the scheme given by Eqs.~(\ref{semiDiscCurvi1stOrder}) and \eqref{explicitE}, a sufficient condition for positivity of internal energy at the time level $n+1$  is $\tau < \tau^{\min} = \min(\tau^\rho,\min\limits_{ijk}(\bfnc{\tau}^{\min}(\vec{\xi}_{ijk})))$ (note that if $\tau^{\rho}$ is sharp, then $\tau < \tau^{\min}$ is also a necessary condition).
\end{proof}

To bound the internal energy at each solution point, $\text{IE}({\bf U}^{n+1}(\vec{\xi}_{ijk}))$, from below by some nonzero quantity, we can choose $\tau \le \tau^{\min} = \min\limits_{ijk}(\bfnc{\tau}^{\min}(\vec{\xi}_{ijk}))$, where $\bfnc{\tau}^{\min}(\vec{\xi}_{ijk})$ is redefined as follows.  Let $c_{\text{IE}}$ be a user-defined parameter $0 < c_{\text{IE}} < 1$.  Then, $\bfnc{\tau}^{\min}(\vec{\xi}_{ijk})$ is defined such that ${\text{IE}}({\bf U}^{n+1}(\vec{\xi}_{ijk})) \geq  c_{\text{IE}} \text{IE}({\bf U}^{n}(\vec{\xi}_{ijk}))$. Hence, $\bfnc{\tau}^{\min}(\vec{\xi}_{ijk})$ is the minimum positive root of the following quadratic equation:
\begin{equation}
\label{TEMPPOLY2}
0
=
\left(\frac{\tau}{J}\right)^2
\left(
\frac{dE}{dt}\frac{d\rho}{dt}-\frac{1}{2} 
\left \| \frac{d{\bm m}}{dt} \right \|^2
\right) 
+ 
\frac{\tau}{J}
\left(\widetilde{\bm u}^{n}\right)^\top \left[ 
\begin{array}{l}
\phantom{-} \frac{dE}{dt}     \\
-\frac{d{\bm m}}{dt}    \\
\phantom{-} \frac{d\rho}{dt}  \\
\end{array}
\right] + \text{IE}(\widetilde{\bm u}^{n})\rho^{n},
\end{equation}
where $\widetilde{\bm u}_i^{n}$ is ${\bm u}_i^{n}$ with the temperature scaled by $1-c_{\text{IE}}$.  If no positive roots exist for this equation, then for all $\tau > 0$, $\text{IE}({\bm u}_i^{n+1}) >  c_{\text{IE}} \text{IE}({\bm u}_i^{n})$; otherwise, there exists the minimum positive root $\tau^{\text{min}}_i$, such that for all $\tau \leq \tau^{\text{min}}_i$ the following inequality holds: $\text{IE}({\bm u}_i^{n+1}) \geq  c_{\text{IE}} \text{IE}({\bm u}_i^{n})$.

\subsection{Entropy stability}
\label{ENTSTAB_1stSCHEME}
We now show that the first-order scheme (Eq. (\ref{semiDiscCurvi1stOrder})) is entropy stable.
\begin{theorem}
The semi-discrete first-order scheme given by Eq. (\ref{semiDiscCurvi1stOrder}) 
is entropy stable.
\end{theorem}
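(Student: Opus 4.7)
The plan is to contract the semi-discrete scheme (\ref{semiDiscCurvi1stOrder}) from the left by ${\bf w}^\top \mathcal{P}$, sum the resulting scalar identity over all elements, and show that every term is either (i) a pure boundary contribution that telescopes across element interfaces, or (ii) a non-positive interior entropy production. Under the assumed entropy-stable physical boundary conditions, assembling these contributions yields the discrete counterpart of (\ref{eq:Eineq.3}).

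First, I would handle the inviscid split $\hat{\bar{{\bf f}}}^{(in)}_l = \hat{\bar{{\bf f}}}^{(EC)}_l - \hat{\bar{{\bf f}}}^{(ED)}_l$ separately. For the entropy-conservative piece, Lemma~\ref{LEMMA_2PTECFLUX_FREE_and_SS} already delivers the element-level identity showing that its total entropy contribution reduces to $\sum_{l}{\bf 1}_1^\top \widehat{\mathcal{P}}_{\perp,\xi^l}\widehat{B}_{\xi^l}\hat{{\bf F}}_l$, i.e.\ pure element-face fluxes matching those of the high-order entropy-consistent scheme of \cite{CFNPSY}; no interior entropy is produced and the interface fluxes telescope between neighbors. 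For the Merriam--Roe dissipation (\ref{MERRIAM_CONTRIB_INTERIOR}), the entropy contraction at each interior flux point produces a quadratic form $-\Delta{\bf w}^\top M^{\mathcal{Y}} \Delta{\bf w}$, which is non-positive because $M^{\mathcal{Y}}$ is SPSD whenever density and temperature are positive, and positivity is secured by Theorem~\ref{POSDENSTHM} together with Lemma~\ref{ZEROROOTBOUND}.

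Next, I would treat the artificial-dissipation fluxes $\hat{\bar{{\bf f}}}^{(AD_1)}_l$, $\hat{\bar{{\bf f}}}^{(AD_1)}_{\hat{\bar{\sigma}},l}$ and the penalty $\hat{{\bf g}}^{(AD_1)}_l$ using Lemma~\ref{BREN_2PT} and Lemma~\ref{lem:dvdw2pt}. The exact algebraic identity ${\nu}_w \Delta{\bm w} = \Delta{\bm \nu}$ converts the discretization in primitive-variable jumps into one in entropy-variable jumps, so that the contraction with $\Delta{\bf w}$ produces $\Delta{\bf w}^\top c^{(B)} \Delta{\bf w} \geq 0$. The minus sign in (\ref{semiDiscCurvi1stOrder}) then guarantees a non-positive entropy contribution. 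The AD penalty is treated identically at the element boundary. The physical viscous term $-D_{\xi^l}\hat{{\bf f}}^{(v)}_l$ together with the high-order penalty $\hat{{\bf g}}_l$ is constructed exactly as in \cite{CFNPSY}, so its non-positive contribution plus telescoping interface terms is inherited verbatim from that reference.

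Finally, I would sum the resulting element-level inequality over all elements. Interior entropy production is a sum of non-positive quadratic forms; the metric-dependent interface fluxes cancel across shared faces thanks to the discrete GCL (\ref{DISC_GCL}) and the freestream-preserving, metric-consistent assembly used in Lemma~\ref{LEMMA_2PTECFLUX_FREE_and_SS}; and the remaining physical-boundary contributions are non-positive by assumption. I expect the main obstacle to be the bookkeeping of the boundary/interior split for the entropy-conservative inviscid flux, since one must carefully identify which pieces telescope through the discrete GCL and which pieces become pure surface contributions, and show that the pattern matches the one in \cite{CFNPSY} despite the first-order stencil used here; once that is in place, all remaining contributions follow from direct application of the pointwise SPD/SPSD lemmas above.
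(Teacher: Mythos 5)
Your proposal is correct and follows essentially the same route as the paper: contract with the entropy variables and the diagonal mass matrix, invoke Lemma~\ref{LEMMA_2PTECFLUX_FREE_and_SS} for the entropy-conservative inviscid part, observe that the Merriam--Roe and Brenner-type artificial-dissipation terms (and their penalties) are SPSD matrices acting on two-point entropy-variable jumps and hence dissipative, and inherit the viscous/penalty estimates from \cite{CFNF,CFNPSY}. The only cosmetic difference is that you spell out the interface telescoping and the role of the positivity results in guaranteeing the SPSD property, which the paper leaves implicit.
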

\begin{proof}
Entropy stability of the scheme given by Eq. (\ref{semiDiscCurvi1stOrder}) can be proven for the
time derivative, inviscid, viscous, and artificial dissipation terms individually.  Contracting Eq.~(\ref{semiDiscCurvi1stOrder})
with entropy variables and taking into account that the mass matrices are diagonal, 
 the time derivative term 
can be manipulated as ${\bf W}^\top\hat{\mathcal{P}}d(J{\bf U})/dt = {\bf 1}^{\top}\hat{\mathcal{P}}d(J \mathcal{S})/dt$  (e.g., see \cite{CFNPSY}).
The entropy stability of the inviscid terms follows directly from Lemma ~\ref{LEMMA_2PTECFLUX_FREE_and_SS}. The entropy stability of the
high-order viscous terms and the corresponding penalties have been proven in \cite{CFNF, CFNPSY}.
The first-order artificial dissipation terms and their penalties 
$\hat{\bar{{\bf f}}}^{(ED)}_l$, $\hat{\bar{{\bf f}}}^{(AD_1)}_l$,$\hat{\bar{{\bf f}}}
^{(AD_1)}_{\hat{\bar{\sigma}},l}$, and $\hat{{\bf g}}^{(AD_1)}_l$, are all formed by using  
SPSD matrices multiplied by 2-point jumps in the entropy variables and therefore are easily shown to be entropy dissipative.
\end{proof}

\section{ Entropy stable velocity and temperature limiters}
\label{VISLIM}
The high-order discretization of the viscous terms may significantly increase the stiffness of the time step constraint required for temperature positivity. To overcome this problem, we construct new conservative, discretely entropy stable limiters that bound the magnitude of the velocity and temperature gradients in troubled elements.   The proposed approach differs from the limiter in \cite{ZhangShu} in two distinct ways:
1) density is not altered at any solution point and 2) the limiters are applied before negative temperatures are encountered.  The benefit of the latter property is that one can then prove discrete entropy stability.

\subsection{Bounds on velocity and temperature}
\label{FLUXFORSMOOTHCRIT}
Taking into account the contribution of velocity and temperature terms to the high-order approximation of the gradient of entropy variables and consequently to the viscous fluxes, we propose to impose the following bounds on $(V_l)_i$ and $T_i$ at each solution point of a troubled element:
\begin{align}
\label{VTBOUNDS}
 | (V_l)_i-\dbar{V_l} | \leq \frac{\dbar{\rho}_H h \dbar{T}_H}{\mu}, \, \, \, 
 \tilde{\lambda}_i \frac{|T_i - \dbar{T}|}{T_i \dbar{T}}  \leq \frac{\dbar{\rho}_H h }{\mu}, 
\end{align}    
where 
\begin{equation}
\label{LMBD}
\tilde{\lambda}_i = \frac{\| \vec{\bfnc{V}}_i+\dover{\vec{\bfnc{V}}} \|}{2} +  \frac{c_i + c(\dbar{T})}{2},
\end{equation}
$\dbar{q}$ is the arithmetic average of a quantity $q$ on a high-order element,
$\dbar{\rho}_H$ is the harmonic average of $\rho_i$ and $\dbar{\rho}$, $\mu$ is the physical viscosity coefficient, $c(\dbar{T})$ is the speed of sound associated with the average temperature and $h$ is a reference length for the element, e.g., $h=\rm{V}^{1/3}$, where $\rm{V}$ is the element volume. Note that other cell averages on a given high-order element can also be used instead of the arithmetic averages in Eqs.~\eqref{VTBOUNDS} and \eqref{LMBD}.

We now construct velocity and temperature limiters such that  they ensure the bounds given by Eq.~(\ref{VTBOUNDS}) without changing the density at any solution point, preserve the conservation of mass, momentum and energy,  and can only decrease the discrete integral of the mathematical entropy on a given element.
The limiting procedure is broken into two steps.  The first step enforces the velocity bound while altering the temperature field in a pointwise discretely entropy stable manner.  The second step enforces the temperature bound by only altering the energy equation in an elementwise entropy stable manner.

\subsection{A limiter to enforce the velocity bound}
First, we modify the velocity at each solution point on a given high-order element, so that it satisfies Eq.~(\ref{VTBOUNDS}).  Let $\vec{\xi}_{ijk} = \vec{\xi}_{a}$ be some solution point on the element.
To enforce this velocity bound, we propose the following limiter:
\begin{equation}
\label{uThetaVel}
\hat{\bf U}_a^{v} = \hat{\bf U}_a
+ 
\frac{1}{\mathcal{P}_a}{\bm f}_v({\bf U}_a,{\bm \theta}^v),
\end{equation}
where 
$\hat{\bf U}(\vec{\xi}_{ijk})^{v} = \hat{\bf U}_a^{v}$,
$\mathcal{P}_{ijk} = \mathcal{P}_a$,
$
{\bm \theta}^v
=
\left[
\begin{array}{ccc}
\theta^v_1 & \theta^v_2 & \theta^v_3
\end{array}
\right]^\top
$,
\begin{equation}
\label{VelFluxLimOpt2}
{\bm f}_v({\bf U}_a,{\bm \theta}^v) =
\rho_{\min}
\left[ 
\begin{array}{ccc}
0                    & 0                    & 0          \\
\theta^v_1 		     & 0                    & 0          \\
0                    & \theta^v_2	        & 0          \\
0                    & 0          			& \theta^v_3 \\
\theta^v_1\dbar{V_1} & \theta^v_2\dbar{V_2} & \theta^v_3\dbar{V_3}
\end{array}
\right]
\left( \dover{\vec{\bfnc{V}}}-\vec{\bfnc{V}}_a \right),
\end{equation}
$\rho_{\min}$ is the minimum density on the element and  $\dover{\vec{\bfnc{V}}}$ is the arithmetic average of velocity on the high-order element. 

Note that the temperature after applying the velocity limiter is given by
\begin{equation}
\label{tempChangeFnThetaVELSQUEEZE}
T({\bf U}_a^v) 
= 
T({\bf U}_a) 
+
\Delta \vec{\bfnc{V}} M({\bf U}_a,{\bm \theta}^v) \Delta \vec{\bfnc{V}},
\end{equation}
where
\begin{equation}
\label{tempChangeFnThetaVELSQUEEZE_2}
\begin{array}{ll}
M({\bf U}_a,{\bm \theta}^v) 
&=
\frac{\gamma-1}{R_g}\frac{\rho_{\min}}{\rho_a J_a \mathcal{P}_a}
\\
&
{\rm diag}
\left[
\theta^v_1 
\left( 
1 - \frac{\theta^v_1 \rho_{\min}}{2\rho_a J_a \mathcal{P}_a} 
\right),
\theta^v_2 
\left( 
1 - \frac{\theta^v_2 \rho_{\min}}{2\rho_a J_a \mathcal{P}_a} 
\right),
\theta^v_3
\left( 
1 - \frac{\theta^v_3 \rho_{\min}}{2\rho_a J_a \mathcal{P}_a} 
\right)
\right],
\end{array}
\end{equation}   
and $\Delta \vec{\bfnc{V}} =  \dover{\vec{\bfnc{V}}}-\vec{\bfnc{V}}_a$.     
 Hence, if 
 $0 \leq 
 \theta^v_l 
 \leq 2\mathcal{P}_a J_a\frac{\rho_a}{\rho_{\min}} \quad \forall \, l$ 
 then 
$T({\bf U}_a^v) \geq T({\bf U}_a)$ 
and $S({\bf U}_a^v) \leq S({\bf U}_a)$.  As follows from Eqs.~(\ref{uThetaVel}-- \ref{VelFluxLimOpt2}),
 the velocity components of $\hat{\bf U}_a^{v}$ obey:   
\begin{equation}
\label{VELCHANGE}
V_l(\hat{\bf U}_a^{v})- \dbar{V}_l 
= 
\left(
V_l(\hat{\bf U}_a)- \dbar{V}_l
\right)
\left(
1-\frac{\theta_l^v}{J_a\mathcal{P}_a}\frac{\rho_{\min}}{\rho_a}
\right) .
\end{equation}   

Since $\dover{\vec{\bfnc{V}}}$ may be changed by the limiting procedure, enforcing the velocity bound at each solution point on a given element should in principle be done iteratively, i.e., Eq.~(\ref{VELCHANGE}) can be recast in the following form:
\begin{equation}
 \label{VELIter}
 \left(V_l \right)_a^{(m)}- \dbar{V}^{(m-1)}_l 
= 
\left(
\left(V_l \right)_a^{(m-1)}- \dbar{V}^{(m-1)}_l
\right)
\left(
1-\frac{(\theta_l^v)^{(m)}}{J_a\mathcal{P}_a}\frac{\rho_{\min}}{\rho_a}
\right) ,
\end{equation}
where the superscript is the iteration number and 
$\dbar{V}^{(m)}_l 
= 
\frac{1}{N_p}\sum_{j=1}^{N_p}  \left(V_l \right)_j^{(m)}$.
Each iteration begins by finding ${\bm \theta}^v_a$ for all solution points on the element.  If the $l$-th velocity component of $\hat{\bf U}_a$ violates the velocity bound given by Eq.~(\ref{VTBOUNDS}), then we solve Eqs.~(\ref{VTBOUNDS}, \ref{VELCHANGE}) for 
$\theta_l^v$ and set 
\begin{equation}
\label{chooseTheta_i}
\left(\theta_l^v\right)_a 
= 
\mathcal{P}_a J_a\frac{\rho_a}{\rho_{\min}} 
\left(
1 - \frac{\dbar{\rho}_H h \dbar{T}_H}{\mu | \left(V_l \right)_a- \dbar{V}_l |}
\right),
\end{equation}  
otherwise we set $\left(\theta_l^v\right)_a  = 0$.  Finally,  we calculate $\theta_l^v$ as follows:
\begin{equation}
\label{chooseTheta}
\theta_l^v = \min(\max\limits_a(\left(\theta_l^v\right)_a),
\min_a(\mathcal{P}_a J_a\frac{\rho_a}{\rho_{\min}})),
\end{equation} 
alter the vector of conservative variables at each point on the element  according to Eq.~(\ref{uThetaVel}), update the velocity average, 
and repeat this iterative process until convergence. The key properties of the proposed velocity limiter are given in Theorem~\ref{velLimitTheorem}.  First, we prove the following lemma.
\begin{lemma}
\label{velLimitLemma}
At any $m$-th iteration of the method given by Eqs.~(\ref{uThetaVel}-- \ref{VelFluxLimOpt2}, \ref{VELIter}--\ref{chooseTheta}), for any $l$-th component of velocity there exist two solution points $i^{(m)}_{l,\max}$ and $i^{(m)}_{l,\min}$, such that for all $1 \le j \le N_p$ on a given element
\begin{equation}
\label{ineq1} 
\left(V_l \right)_{i^{(m)}_{l,\min}}^{(m)}
\leq  
\left(V_l \right)_j^{(m)}
\leq  
\left(V_l \right)_{i^{(m)}_{l,\max}}^{(m)},
\end{equation}
\begin{equation}
\label{ineq2} 
\left(V_l \right)_{i^{(m)}_{l,\min}}^{(m-1)}
\leq 
\dbar{V}^{(m-1)}_l  
\leq   
\left(V_l \right)_{i^{(m)}_{l,\max}}^{(m-1)},
\end{equation}
where $\left(V_l \right)_{i^{(m)}_{l,\min}}^{(m-1)}$ is the velocity at solution point ${i^{(m)}_{l,\min}}$ at the $(m-1)$-th iteration.
\begin{proof}
Let us prove the existence of an $i^{(m)}_{l,\max}$ satisfying both inequalities.  Let 
$\left(V_l \right)_a^{(m)} = \max\limits_{1\leq j\leq N_p}\left(V_l \right)_j^{(m)}$, 
so that the index ``$a$" plays the role of $i^{(m)}_{l,\max}$ in \eqref{ineq1}.  If $a$ also satisfies \eqref{ineq2}, then we can set $a=i^{(m)}_{l,\max}$ and hence such $i^{(m)}_{l,\max}$ exists.  Suppose that Eq.~(\ref{ineq2}) does not hold, i.e., $\left(V_l \right)_a^{(m-1)} < \dbar{V}^{(m-1)}_l$.  Then, there exists at least on solution point $b$ such that  $\left(V_l \right)_b^{(m-1)} > \dbar{V}^{(m-1)}_l$ and from Eqs.~\eqref{VELIter} and \eqref{chooseTheta} it follows that  $\left(V_l \right)_b^{(m)} \geq \dbar{V}^{(m-1)}_l$.  Note that it is impossible to have  $\left(V_l \right)_a^{(m)} >  \left(V_l \right)_b^{(m)} \geq \dbar{V}^{(m-1)}_l$, because $ \left(V_l \right)_a^{(m-1)} > \dbar{V}^{(m-1)}_l$ as follows from Eq.~\eqref{VELIter}.  Thus, 
$
\left(V_l \right)_a^{(m)} = \left(V_l \right)_b^{(m)},
$
so that the $b$-th solution point satisfies both Eqs.~\eqref{ineq1} and \eqref{ineq2}.
Hence, we can set $i^{(m)}_{l,\max}=b$, which again implies that such $i^{(m)}_{l,\max}$ satisfying Eqs.~\eqref{ineq1} and \eqref{ineq2} exists.  An identical argument holds for $i^{(m)}_{l,\min}$.
\end{proof}
\end{lemma}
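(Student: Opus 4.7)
The plan is to prove existence of $i^{(m)}_{l,\max}$ satisfying both \eqref{ineq1} and \eqref{ineq2}; existence of $i^{(m)}_{l,\min}$ will follow by an entirely symmetric argument. The key observation driving everything is that, by the choice \eqref{chooseTheta}, $(\theta_l^v)^{(m)} \le \min_a \mathcal{P}_a J_a \rho_a/\rho_{\min}$, so the multiplicative factor $1-\frac{(\theta_l^v)^{(m)}}{J_a\mathcal{P}_a}\frac{\rho_{\min}}{\rho_a}$ appearing in the iterative update \eqref{VELIter} is always non-negative. Consequently, the update preserves the sign of each point's deviation from the previous mean $\dbar{V}^{(m-1)}_l$ (or collapses it to zero).

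First I would pick any index $a$ at which $\left(V_l\right)_a^{(m)} = \max_{1\le j\le N_p} \left(V_l\right)_j^{(m)}$, so $a$ automatically plays the role of $i^{(m)}_{l,\max}$ in \eqref{ineq1}. If this same $a$ also satisfies \eqref{ineq2}, there is nothing left to do and we take $i^{(m)}_{l,\max} = a$. The only remaining case to handle is $\left(V_l\right)_a^{(m-1)} < \dbar{V}^{(m-1)}_l$.

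In that case, since an arithmetic mean that strictly exceeds some sample value must also be strictly exceeded by some other sample value, a pigeonhole argument produces an index $b$ with $\left(V_l\right)_b^{(m-1)} > \dbar{V}^{(m-1)}_l$. Applying \eqref{VELIter} separately at $a$ and $b$, together with non-negativity of the multiplicative factor, yields $\left(V_l\right)_a^{(m)} \le \dbar{V}^{(m-1)}_l \le \left(V_l\right)_b^{(m)}$. On the other hand, maximality of $a$ forces $\left(V_l\right)_b^{(m)} \le \left(V_l\right)_a^{(m)}$; chaining these inequalities collapses the whole string to equality, so $\left(V_l\right)_a^{(m)} = \left(V_l\right)_b^{(m)} = \dbar{V}^{(m-1)}_l$. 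Hence $b$ is also a maximizer at iteration $m$ and additionally satisfies \eqref{ineq2}, so we set $i^{(m)}_{l,\max} = b$. The dual claim for $i^{(m)}_{l,\min}$ is verbatim with $\max$ replaced by $\min$ and strict inequalities reversed.

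The main conceptual obstacle I expect is recognizing that the maximizer at iteration $m$ need not itself have been above the mean at iteration $m-1$; once one notices this, the sign-preservation property of \eqref{VELIter} is what forces the degenerate coincidence $\left(V_l\right)_a^{(m)} = \dbar{V}^{(m-1)}_l$, which is precisely what allows the desired $b$ to be extracted from the pigeonhole. After this structural observation, the remainder is just a short chain of inequalities.
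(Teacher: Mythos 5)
Your proposal is correct and follows essentially the same route as the paper: pick a maximizer $a$ of $\left(V_l\right)^{(m)}$, and in the problematic case $\left(V_l\right)_a^{(m-1)} < \dbar{V}^{(m-1)}_l$ extract a point $b$ above the mean, then use the non-negativity of the factor $1-\frac{(\theta_l^v)^{(m)}}{J_a\mathcal{P}_a}\frac{\rho_{\min}}{\rho_a}$ (guaranteed by Eq.~\eqref{chooseTheta}) to squeeze $\left(V_l\right)_b^{(m)} \le \left(V_l\right)_a^{(m)} \le \dbar{V}^{(m-1)}_l \le \left(V_l\right)_b^{(m)}$ into an equality, so $b$ serves as $i^{(m)}_{l,\max}$. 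Your explicit sign-preservation observation is a slightly cleaner packaging of the contradiction step the paper uses, but the argument is the same.
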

\begin{theorem}
\label{velLimitTheorem}
The iterative method given by Eqs.~(\ref{uThetaVel}-- \ref{VelFluxLimOpt2}, \ref{VELIter}--\ref{chooseTheta}) is conservative and pointwise entropy dissipative.  
Also, the maximum possible velocity variation after $m$ iterations is bounded as follows:
\begin{equation}
\label{velBoundnthIteration}
\begin{array}{ll}
\max_a(\left(V_l \right)_a^{(m)})-\min_a(\left(V_l \right)_a^{(m)})
&
\leq
(\max_a(\left(V_l \right)_a^{(0)})-\min_a(\left(V_l \right)_a^{(0)}))
\\
&
\prod_{n=1}^m\left(1-\frac{(\theta^v_l)^{(n)}}{\max\limits_a(\mathcal{P}_a J_a\frac{\rho_a}{\rho_{\min}})}\right)  \quad \forall l.
\end{array} 
\end{equation}
Furthermore, this iterative method converges, so that upon convergence, the velocity components at all solution points satisfy the bound given by Eq.~(\ref{VTBOUNDS}).  
\end{theorem}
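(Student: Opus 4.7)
The proof establishes four claims---conservation, pointwise entropy dissipation, the iterative range bound~\eqref{velBoundnthIteration}, and convergence together with ultimate satisfaction of~\eqref{VTBOUNDS}. For conservation, I would sum the update~\eqref{uThetaVel} weighted by $\mathcal{P}_a$ over all solution points on the element and show that $\sum_a {\bm f}_v({\bf U}_a,{\bm \theta}^v) = {\bm 0}$ componentwise. The density row of~\eqref{VelFluxLimOpt2} is identically zero, so density is preserved pointwise; the $l$-th momentum row contributes $\rho_{\min}\theta^v_l \sum_a (\dbar{V}_l - (V_l)_a)$, which vanishes by the very definition of the arithmetic mean $\dbar{V}_l$, and the energy row vanishes column by column by the same argument.

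For pointwise entropy dissipation I would apply the temperature-change identity \eqref{tempChangeFnThetaVELSQUEEZE}--\eqref{tempChangeFnThetaVELSQUEEZE_2}. The diagonal matrix $M({\bf U}_a,{\bm \theta}^v)$ has entries proportional to $\theta^v_l(1-\theta^v_l \rho_{\min}/(2\rho_a J_a \mathcal{P}_a))$; by~\eqref{chooseTheta_i} each $(\theta^v_l)_a$ is non-negative, and the cap~\eqref{chooseTheta} enforces $\theta^v_l \leq \min_a(\mathcal{P}_a J_a \rho_a/\rho_{\min})$, comfortably below the positivity threshold $2\rho_a J_a \mathcal{P}_a/\rho_{\min}$. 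Hence $M$ is positive semidefinite, so $T({\bf U}^v_a) \geq T({\bf U}_a)$; because density is unchanged while the ideal-gas entropy function $\mathcal{S} = -\rho s$ is strictly decreasing in $T$ at fixed $\rho$, we obtain $\mathcal{S}({\bf U}^v_a) \leq \mathcal{S}({\bf U}_a)$ at every solution point.

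To prove the telescoping bound~\eqref{velBoundnthIteration} I would combine Lemma~\ref{velLimitLemma} with the one-step update~\eqref{VELIter}. The lemma provides indices $i^{(m)}_{l,\max}, i^{(m)}_{l,\min}$ at which the new-iterate max and min are attained and whose old-iterate values bracket $\dbar{V}_l^{(m-1)}$. Evaluating~\eqref{VELIter} at both indices and subtracting, the signs of $(V_l)^{(m-1)}_{i^{(m)}_{l,\max}} - \dbar{V}^{(m-1)}_l$ and $\dbar{V}^{(m-1)}_l - (V_l)^{(m-1)}_{i^{(m)}_{l,\min}}$ are controlled so that replacing the per-point factor $1-(\theta^v_l)^{(m)}\rho_{\min}/(J_a\mathcal{P}_a\rho_a)$ by $1-(\theta^v_l)^{(m)}/\max_a(\mathcal{P}_a J_a \rho_a/\rho_{\min})$ only loosens the inequality. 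Bounding the resulting previous-step difference by the full previous-step range and iterating on $m$ yields the product form in~\eqref{velBoundnthIteration}.

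For convergence and ultimate bound satisfaction, at any iteration where some point $a^\ast$ violates~\eqref{VTBOUNDS}, formula~\eqref{chooseTheta_i} gives $(\theta^v_l)_{a^\ast}>0$, hence $(\theta^v_l)^{(m)}>0$, and the contraction factor in the variation bound is strictly less than one. Since densities are unchanged and temperatures only rise, the local thresholds $\dbar{\rho}_H h \dbar{T}_H/\mu$ in~\eqref{VTBOUNDS} are bounded below by a positive constant independent of $m$, so the geometric-style decay of $\max_a(V_l)^{(m)}_a-\min_a(V_l)^{(m)}_a$ drives every $|(V_l)^{(m)}_a-\dbar{V}_l^{(m)}|$ below the threshold in finitely many steps, whereupon all $(\theta^v_l)_a$ vanish and the iteration terminates. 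The main obstacle I anticipate is precisely this last step: because $\dbar{V}_l^{(m)}$ is not preserved by the iteration---only the mass-weighted momentum is---one must translate the range bound~\eqref{velBoundnthIteration} into a uniform bound on deviations from the shifting mean and verify that the reduction factor $(\theta^v_l)^{(m)}$ stays bounded away from zero until the bound is met, rather than decaying to zero while the bound is still violated.
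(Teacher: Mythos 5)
Your proposal follows the paper's proof essentially step for step: conservation by summing the correction $\bm{f}_v$ over the element and invoking the definition of the arithmetic mean, pointwise entropy dissipation from the temperature increase at fixed density under the cap $\theta^v_l \leq \min_a(\mathcal{P}_a J_a \rho_a/\rho_{\min})$, and the range bound via Lemma~\ref{velLimitLemma} with the per-point contraction factor replaced by the worst-case factor. The subtlety you flag at the end---that the mean shifts between iterations and $(\theta^v_l)^{(m)}$ could in principle decay to zero before the bound \eqref{VTBOUNDS} is met---is a genuine one, but the paper's own proof does not address it either; it simply asserts convergence from the contraction estimate, so your treatment is no less complete than the original.
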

\begin{proof}
At each iteration, $\theta^v_l$ is computed using Eq.~(\ref{chooseTheta}), so that
$\theta^v_l \leq  \text{min}_a(\mathcal{P}_a J_a\frac{\rho_a}{\rho_{\min}})$ and
the temperature at each solution point may only increase as follows from Eq.~(\ref{tempChangeFnThetaVELSQUEEZE}).  Since  the density at each solution point remains unchanged during this limiting procedure, the mathematical entropy can only decrease.  Therefore, this iterative method is pointwise entropy dissipative.
Conservation follows from the fact that at each iteration $(\theta^v_l)^{(m)}$ is a constant on each high-order element and 
$\sum_{a=1}^{N_p} \mathcal{P}_a(\frac{1}{\mathcal{P}_a}{\bm f}_v({\bf U}_a,{\bm \theta}^v)) = 0$.

Convergence follows from the fact the iteration given  by Eq.~((\ref{uThetaVel}-- \ref{VelFluxLimOpt2}, \ref{VELIter}--\ref{chooseTheta})) is contractive.
Indeed, let $i_{l,\min}^{(m)}$ and $i_{l,\max}^{(m)}$ be defined as in Lemma~\ref{velLimitLemma}.         
Using  $(\theta^v)_l^{(m)} \leq  \min\limits_a(\mathcal{P}_a J_a\frac{\rho_a}{\rho_{\min}})$ $\forall m$ and Eq.~(\ref{VELCHANGE}), we have 
\begin{equation}
\begin{array}{ll}
\label{velContraction}
&\max_a(\left(V_l \right)_a^{(m)})-\min_a(\left(V_l \right)_a^{(m)})
 \\
&= 
\left(V_l \right)_{i_{l,\max}^{(m)}}^{(m)} - \dbar{V}^{(m-1)}_l  
+ 
\dbar{V}^{(m-1)}_l  - \left(V_l \right)_{i_{l,\min}^{(m)}}^{(m)}  \nonumber \\
&=
(\left(V_l \right)_{i_{l,\max}^{(m)}}^{(m-1)} - \dbar{V}^{(m-1)}_l  )
\left(
1-\frac{(\theta_l^v)^{(m)}}{\mathcal{P}_{i^{(m)}_{l,\max}}J_{i^{(m)}_{l,\max}}}\frac{\rho_{\min}}{\rho_{i^{(m)}_{l,\max}}}
\right)  
\nonumber 
\\&
+
(\dbar{V}^{(m-1)}_l -\left(V_l \right)_{i_{l,\min}^{(m)}}^{(m-1)} )
\left(
1-\frac{(\theta_l^v)^{(m)}}{\mathcal{P}_{i^{(m)}_{l,\min}}J_{i^{(m)}_{l,\min}}}\frac{\rho_{\min}}{\rho_{i^{(m)}_{l,\min}}}
\right)  
\nonumber 
\\
&
\leq 
(\left(V_l \right)_{i_{l,\max}^{(m)}}^{(m-1)} - \left(V_l \right)_{i_{l,\min}^{(m)}}^{(m-1)})
\left(1-\frac{(\theta_l^v)^{(m)}}{\max\limits_a(\mathcal{P}_a J_a\frac{\rho_a}{\rho_{\min}})}\right)  
\nonumber \\
&\leq
\left(
\max_a(\left(V_l \right)_a^{(m-1)})-\min_a(\left(V_l \right)_a^{(m-1)})
\right)
\left(1-\frac{(\theta_l^v)^{(m)}}{\max\limits_a(\mathcal{P}_a J_a\frac{\rho_a}{\rho_{\min}})}\right)  
\nonumber \\
&\leq
\left(
\max_a(\left(V_l \right)_a^{(0)})-\min_a(\left(V_l \right)_a^{(0)})
\right)
\prod_{n=1}^m
\left(1-\frac{(\theta_l^v)^{(n)}}{\max\limits_a(\mathcal{P}_a J_a\frac{\rho_a}{\rho_{\min}})}\right)  . \nonumber
\end{array}
\end{equation}
\end{proof}
\begin{remark}
Note that for all test problems presented in Section~\ref{results},
using only one iteration of the above iterative method per time step is sufficient to eliminate the stiffness of the time step constraint for temperature positivity for each troubled element.   
\end{remark}

\subsection{A limiter to enforce the temperature bound} 
The second step is to enforce the bound on temperature, which is given by Eq.~(\ref{VTBOUNDS}). Similar to the velocity limiter,
 we modify the temperature at each solution point by using the following limiter:
\begin{equation}  
\label{uThetaTemp} 
\hat{\bf U}_a^{t} = \hat{\bf U}_a
+ 
\frac{\theta^t}{\mathcal{P}_a}{\bm f}_t(\hat{\bf U}_a),
\, \, 
{\bm f}_t(\hat{\bf U}_a) =
\rho_{\min}
\left[ 
\begin{array}{ccccc}
0 &  0 & 0 &  0 & 
(\dbar{T} - T_a)
\end{array}
\right]^\top,
\end{equation}
where 
$\dbar{T}$ is the arithmetic average of temperature on a given high-order element.
After applying the limiter, the modified temperature is given by
\begin{equation}
\label{TEMPCHANGE}
T(\hat{\bf U}_a^{t}) - \dbar{T} 
= 
(T_a - \dbar{T})
\left(
1-\frac{\gamma-1}{R_g}\frac{\theta^t\rho_{\min}}{J_a\mathcal{P}_a\rho_a}
\right) .
\end{equation}   

If $\hat{\bf U}_a$ violates the temperature bound given by Eq.~(\ref{VTBOUNDS}), then we set 
\begin{equation}
\label{thetaiTempSqueeze}
\theta_a^t = J_a\mathcal{P}_a\frac{\rho_a}{\rho_{\min}} \frac{R_g}{\gamma-1}
\left(1 - \frac{T_a \dbar{T}}{|T_a - \dbar{T}|}\frac{\dbar{\rho}_H h}{\tilde{\lambda}_a \mu}\right),
\end{equation}  
otherwise we set $\theta^t_a = 0$. Note that by construction, $0 \le \theta^t_a \le 1$, $\forall a$.  Finally, the temperature limiter is defined as follows:
\begin{equation}
\label{chooseThetaTemp}
\theta^t = 
\min
\left(
\max\limits_a(\theta_a^t),\frac{R_g}{\gamma-1}\min_a(J_a\mathcal{P}_a\frac{\rho_a}{\rho_{\min}} )
\right)
\end{equation} 
and $\hat{\bf U}_a^{t}$ at all solution points on the element is modified according to Eq.~(\ref{uThetaTemp}).
Similar to the velocity limiter, the temperature limiting procedure should in general be performed iteratively.
The key properties of the proposed temperature limiter are presented in the following theorem.
\begin{theorem}
The iterative temperature limiting procedure given by Eqs.~(\ref{uThetaTemp}, 
\ref{thetaiTempSqueeze}, \ref{chooseThetaTemp}) 
is conservative and elementwise entropy dissipative.  
Also, the maximum possible temperature variation after $m$ iterations is bounded as follows:
\begin{equation}
\label{tempBoundnthIteration}
\begin{array}{ll}
\max\limits_a(T_a^{(m)})-\min_a(T_a^{(m)}) \leq
& (\max_a(T_a^{(0)})-\min_a(T_a^{(0)}) ) \\
& \prod_{n=1}^m\left(1-\frac{\gamma-1}{R_g}
\frac{(\theta^t)^{(n)}}
{\max_a(J_a\mathcal{P}_a\frac{\rho_a}{\rho_{\min}} )}
\right).
\end{array}
\end{equation}
Furthermore, this iterative method 
converges, so that upon convergence, the temperature at all solution points satisfies the bound given by Eq.~(\ref{VTBOUNDS}).  
\end{theorem}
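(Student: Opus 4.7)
The plan is to mirror the structure of the proof of Theorem~\ref{velLimitTheorem} for the velocity limiter, adapting each step to the scalar temperature limiter. Several steps simplify because the update modifies only the energy component of $\hat{\bf U}_a$ and leaves the density and momentum untouched.

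First I would check conservation. Conservation of mass and momentum is immediate from the form of ${\bm f}_t$, and conservation of the element total energy reduces to $\sum_a(\dbar{T}-T_a)=0$, which holds because $\dbar{T}$ is taken as the unweighted arithmetic mean $\frac{1}{N_p}\sum_a T_a$, the direct analog of the averaging choice used in Eq.~\eqref{VELIter}.

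Next I would prove elementwise entropy dissipation. Since $\rho_a$ is fixed, the change in the discrete mathematical entropy $\mathcal{S}=-\rho s$ collapses to
\begin{equation*}
\Delta\!\sum_a \mathcal{P}_a J_a \mathcal{S}_a = -c_v\!\sum_a \mathcal{P}_a J_a \rho_a\bigl(\log T_a^{\mathrm{new}} - \log T_a^{\mathrm{old}}\bigr).
\end{equation*}
Rewriting Eq.~\eqref{TEMPCHANGE} as the pointwise convex combination $T_a^{\mathrm{new}}=(1-\alpha_a)T_a^{\mathrm{old}}+\alpha_a\dbar{T}$ with $\alpha_a=\frac{(\gamma-1)\theta^t\rho_{\min}}{R_g J_a\mathcal{P}_a\rho_a}$, I would note that the cap on $\theta^t$ in Eq.~\eqref{chooseThetaTemp} is precisely what guarantees $\alpha_a\le 1$ for every $a$. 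Applying concavity of $\log$ pointwise and then using the key algebraic identity $\mathcal{P}_a J_a \rho_a\alpha_a = (\gamma-1)\theta^t\rho_{\min}/R_g$, which is independent of $a$, reduces the lower bound on the weighted log sum to $\tfrac{(\gamma-1)\theta^t\rho_{\min}}{R_g}\bigl(N_p\log\dbar{T}-\sum_a\log T_a^{\mathrm{old}}\bigr)$; this is nonnegative by Jensen's inequality applied to $\log$ with $\dbar{T}=\frac{1}{N_p}\sum_a T_a$.

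For the variation bound \eqref{tempBoundnthIteration} and convergence I would prove a temperature analog of Lemma~\ref{velLimitLemma}: at each iteration there exist indices attaining the current maximum and minimum of $T$ whose previous-iteration values lie on the correct sides of $\dbar{T}^{(m-1)}$, with the same swap argument used for velocity. Combined with \eqref{TEMPCHANGE} and the uniform lower bound $\alpha_a\ge\tfrac{(\gamma-1)(\theta^t)^{(m)}}{R_g\max_a(J_a\mathcal{P}_a\rho_a/\rho_{\min})}$, this yields a one-step contraction on $\max_a T_a^{(m)}-\min_a T_a^{(m)}$ whose iteration produces Eq.~\eqref{tempBoundnthIteration}. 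Convergence follows from this contraction as in the velocity case, and at a fixed point the construction of $\theta_a^t$ in \eqref{thetaiTempSqueeze} (either $\theta_a^t=0$ because Eq.~\eqref{VTBOUNDS} already holds there, or $\theta_a^t>0$ chosen exactly to enforce it) ensures the temperature bound at every solution point.

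The main obstacle is the entropy inequality. Unlike the velocity limiter, which is pointwise entropy dissipative because each $T_a$ strictly increases, the temperature limiter moves points both up and down and therefore forces one to combine a pointwise concavity estimate with a Jensen-type average across the element. That this argument closes relies on the unweighted arithmetic mean $\dbar{T}$ being compatible both with total-energy conservation and with the combination $\mathcal{P}_a J_a \rho_a \alpha_a$ being $a$-independent; the remaining contraction and convergence steps are then direct transcriptions of the velocity argument.
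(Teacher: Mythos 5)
Your proposal is correct, and three of its four parts --- conservation, the contraction bound \eqref{tempBoundnthIteration} via a temperature analog of Lemma~\ref{velLimitLemma}, and convergence --- follow essentially the same path as the paper. The genuinely different step is the entropy inequality. The paper works with the derivative $\frac{dS({\bf U}_a(\theta^t))}{d\theta^t} = -\frac{\rho_{\min}}{J_a\mathcal{P}_a}\frac{\dbar{T}-T_a}{T({\bf U}_a(\theta^t))}$, splits the solution points into the index sets $I_L$, $I_E$, $I_G$ according to the sign of $T_a-\dbar{T}$, replaces $T({\bf U}_a(\theta^t))$ by $\dbar{T}$ in each denominator (using that the update never moves a temperature across the mean), and telescopes to zero via $\sum_a(\dbar{T}-T_a)=0$; the nonpositivity of this derivative over the whole admissible interval $0\le\theta^t\le\frac{R_g}{\gamma-1}\min_a\left(J_a\mathcal{P}_a\frac{\rho_a}{\rho_{\min}}\right)$ then yields \eqref{Sstabl}. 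You instead bound the finite entropy difference in one shot: writing the update as the convex combination $T_a^{\mathrm{new}}=(1-\alpha_a)T_a+\alpha_a\dbar{T}$ (valid precisely because the cap in \eqref{chooseThetaTemp} gives $0\le\alpha_a\le1$), applying concavity of $\log$ pointwise, noting that $\mathcal{P}_aJ_a\rho_a\alpha_a$ is independent of $a$, and closing with Jensen's inequality for the unweighted arithmetic mean. Both arguments rest on the same two structural facts --- the cap on $\theta^t$ and the choice of $\dbar{T}$ as the arithmetic average --- but yours avoids the index-set bookkeeping and the monotonicity-in-$\theta^t$ argument, at the price of invoking concavity twice; it also makes transparent why the dissipation is only elementwise rather than pointwise, which matches the paper's framing. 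The paper, like you, omits the detailed contraction computation for temperature, stating it is nearly identical to the velocity case, and its convergence argument likewise combines the shrinking variation with a uniform positive lower bound on the right-hand side of the temperature bound in \eqref{VTBOUNDS}, so your treatment of those steps is at the same level of rigor as the original.
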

\begin{proof}
Conservation follows from the fact that $(\theta^t)^{(m)}$ is a constant on each high-order element and
$\sum_{a=1}^{N_p} 
\mathcal{P}_a
(\frac{(\theta^t)^{(m)}}{\mathcal{P}_a}{\bm f}_t(\hat{\bf U}^{(m-1)}_a)) = 0$.  

Let us show that the temperature limiting procedure is elementwise entropy dissipative, i.e.,
\begin{equation}
\begin{array}{l}
\label{Sstabl}
\sum_{a=1}^{N_p} \mathcal{P}_a J_a S({\bf U}^t_a) 
\leq
\sum_{a=1}^{N_p} \mathcal{P}_a J_a S({\bf U}_a).
\end{array}
\end{equation}
Note that it is sufficient to show that the entropy dissipates at the first iteration, because the same argument holds for all other iterations as well.  
Let $I_L$, $I_E$ and $I_G$ be the following index sets: $T_a < \dbar{T} \ \forall a \in I_L$, $T_a = \dbar{T} \ \forall a \in I_E$, and $T_a > \dbar{T} \ \forall a \in I_G$, respectively.
Taking into account that 
\begin{equation}
\label{dSdt_tempSQUEEZE}
\frac{dS({\bf U}_a(\theta^t))}{d\theta^t} 
= 
-\frac{\rho_{\min}}{J_a\mathcal{P}_a}
\frac{\dbar{T}-T_a}{T({\bf U}_a(\theta^t))},
\end{equation}
 we have
 \begin{equation}
 \begin{array}{ll}
\frac{d}{d\theta^t}
\sum_{a=1}^{N_p}
 \mathcal{P}_a J_a S({\bf U}_a(\theta^t))
&
= 
\sum_{a=1}^{N_p} 
\mathcal{P}_a J_a  
\frac{dS({\bf U}_a(\theta^t))}{d\theta^t} 
=
-\rho_{\min}
\sum_{a=1}^{N_p}\frac{\dbar{T} - T_a}{T({\bf U}_a(\theta^t))}
\nonumber
\\
&=
\rho_{\min} 
\left( 
-\sum_{a \in I_L}\frac{\dbar{T} - T_a}{T({\bf U}_a(\theta^t))}
+\sum_{a \in I_G}\frac{T_a - \dbar{T}}{T({\bf U}_a(\theta^t))}
\right)
\nonumber \\
&\leq
\rho_{\min} 
\left( 
-\sum_{a \in I_L}\frac{\dbar{T} - T_a}{\dbar{T}}
+\sum_{a \in I_G}\frac{T_a - \dbar{T}}{\dbar{T}}
\right)
\nonumber \\
&=
\frac{\rho_{\min} }{\dbar{T}} \left(-\sum_{a \in I_L}\dbar{T} - T_a
+\sum_{a \in I_G}T_a - \dbar{T}\right)
\nonumber \\
&=
\frac{\rho_{\min} }{\dbar{T}}
\left(-N_p \dbar{T} + \sum_{a=1}^{N_p}T_a\right) = 0,
\end{array}
\end{equation}
so long as 
$0 
\leq 
\theta^t 
\leq 
\frac{R_g}{\gamma-1}
\min\limits_a(J_a\mathcal{P}_a\frac{\rho_a}{\rho_{\min}} )$ 
which is the case when $\theta^t$ is selected according to Eq.~(\ref{chooseThetaTemp}).  Thus, 
$\sum_{a=1}^{N_p} \mathcal{P}_a J_a S({\bf U}_a(\theta^t))$
is non-increasing as a function of $\theta^t$ on 
$\leq 
\theta^t 
\leq 
\frac{R_g}{\gamma-1}
\min\limits_a(J_a\mathcal{P}_a\frac{\rho_a}{\rho_{\min}} )$
 and satisfies Eq.~(\ref{Sstabl}).  The proof of the temperature bound given by Eq.~(\ref{tempBoundnthIteration}) relies on 
Eqs.~(\ref{TEMPCHANGE}, \ref{chooseThetaTemp}) and is nearly identical to the proof of the velocity bound (Eq.~(\ref{velBoundnthIteration})) and therefore not presented herein.  
Together Eq.~\eqref{tempBoundnthIteration} and Eq.~\eqref{chooseThetaTemp} imply that the temperature variation decreases with each iteration.  Hence, the bound in Eq.~\eqref{VTBOUNDS}  
is met after a finite number of iterations, because 
$\min\limits_a(T_a) \leq T_a^{(m)} \leq \max\limits_a(T_a)  \ \forall m$ and the following lower bound holds: 
\begin{align}
\frac{T_a^{(m)} \dbar{T}^{(m)}\dbar{\rho}_H h }
{\tilde{\lambda}^{(m)}_a \mu} 
\geq
\frac{(\min\limits_a(T_a))^2\dbar{\rho}_H h }{\left[ \max\limits_a(\|\bfnc{V}\|_a) + c(\max\limits_a(T_a)) \right] \mu} 
> 0,
\end{align}
where
$\dbar{T}^{(m)}$ is the arithmetic average of temperature on the element. 
\end{proof}
It should be emphasized again that only one iteration per each troubled element per time step is sufficient to eliminate the stiffness of the temperature positivity time constraint for all test problems considered.

\subsection{Consistency of the velocity and temperature limiting procedure}
\label{OrderOfAccuracyVISLIM}
Form Eqs.~(\ref{VelFluxLimOpt2}) and (\ref{uThetaTemp}), it follows that  the velocity and temperature limiters are first-order accurate, i.e.,
$\|\hat{\bf U}_a^{v} - \hat{\bf U}_a\| =O(h)$ and $\|\hat{\bf U}_a^{t} - \hat{\bf U}_a\| =O(h)$.
Hence, the above limiting procedure is design-order accurate.

\section{Numerical Results}
\label{results}
To assess accuracy, discontinuity-capturing, and positivity preservation properties of the proposed first-order entropy stable FV scheme for the 3-D compressible Navier-Stokes equations, we consider standard benchmark problems with smooth and discontinuous solutions. 
In all numerical experiments  presented herein, the first-order explicit Euler method is used to advance the semi-discretization in time. Note
that this scheme violates the entropy stability property of the semi-discrete
operator by a factor proportional to the local temporal truncation error. 
The time step in all numerical experiments is selected by using the Courant-Friedrich-Levy (CFL)-type condition 
and the density and temperature positivity constraints presented in Section~\ref{POSPRE}.


\subsection{$3$-D Viscous Shock}
%
%
The first test problem is the propagation of a $3$-D viscous shock on non-uniform randomly perturbed grids.
Since this problem possesses a smooth analytical solution \cite{Fthesis}, we use it to verify the order property of the proposed scheme.
Note, however, that on coarse meshes, the viscous shock is under-resolved and behaves as a strong discontinuity. 
\begin{table}[!h]
\begin{center}
\begin{tabular}{ccccc}
\hline
$K$   & $L_{\infty}$ error  &  rate  & $L_2$ error  &  rate \\
\hline
  3  & 1.21          &  --         & 1.02e-1    & --    \\
  6  & 7.74e-1      &  0.65     & 6.74e-2    & 0.60  \\
 12  & 9.57e-1     & -0.31     & 4.10e-2    & 0.72  \\
 24  & 5.46e-1     &  0.81     & 2.39e-2    & 0.79  \\
 48  & 3.02e-1     &  0.85     & 1.25e-2    & 0.94  \\
 96  & 1.42e-1     &  1.09     & 6.39e-3    & 0.97  \\
\hline
\end{tabular}
\end{center}
\caption{\label{tab2} $L_{\infty}$ and $L_2$ errors and their convergence rates obtained 
on randomly perturbed grids with $p = 4$ LGL elements for the 3-D viscous shock problem at $Ma=2.5, Re=50$. }
\end{table}
To make the problem three-dimensional, a planar viscous shock, which is initially centered at the origin, is rotated, so that it propagates 
along the direction $[1, 1, 1]^\top$.
The flow parameters are set as follows: $Re = 50$, $Ma = 2.5$, and $Pr = 3/4$, and the simulation is run until $t_{\text{final}} = 0.1$.   
The randomly perturbed non-uniform grids are constructed from corresponding uniform grids with $K^3$ total elements partitioning the $-0.5 \leq x,y,z \leq 0.5$  domain by adding $r/K$ to each coordinate of each vertex in the domain, where the variable $r$ is a random number such that $0 \leq r < 0.4$.  As follows from the results presented in Table~\ref{tab2}, the $L_{\infty}$ and $L_2$ errors and their convergence rates obtain on these grids corroborate that 
 the proposed FV scheme is first-order accurate.
\begin{figure}[!h] 
	\begin{subfigure}{0.5\textwidth}
		\includegraphics[width=0.9\linewidth]{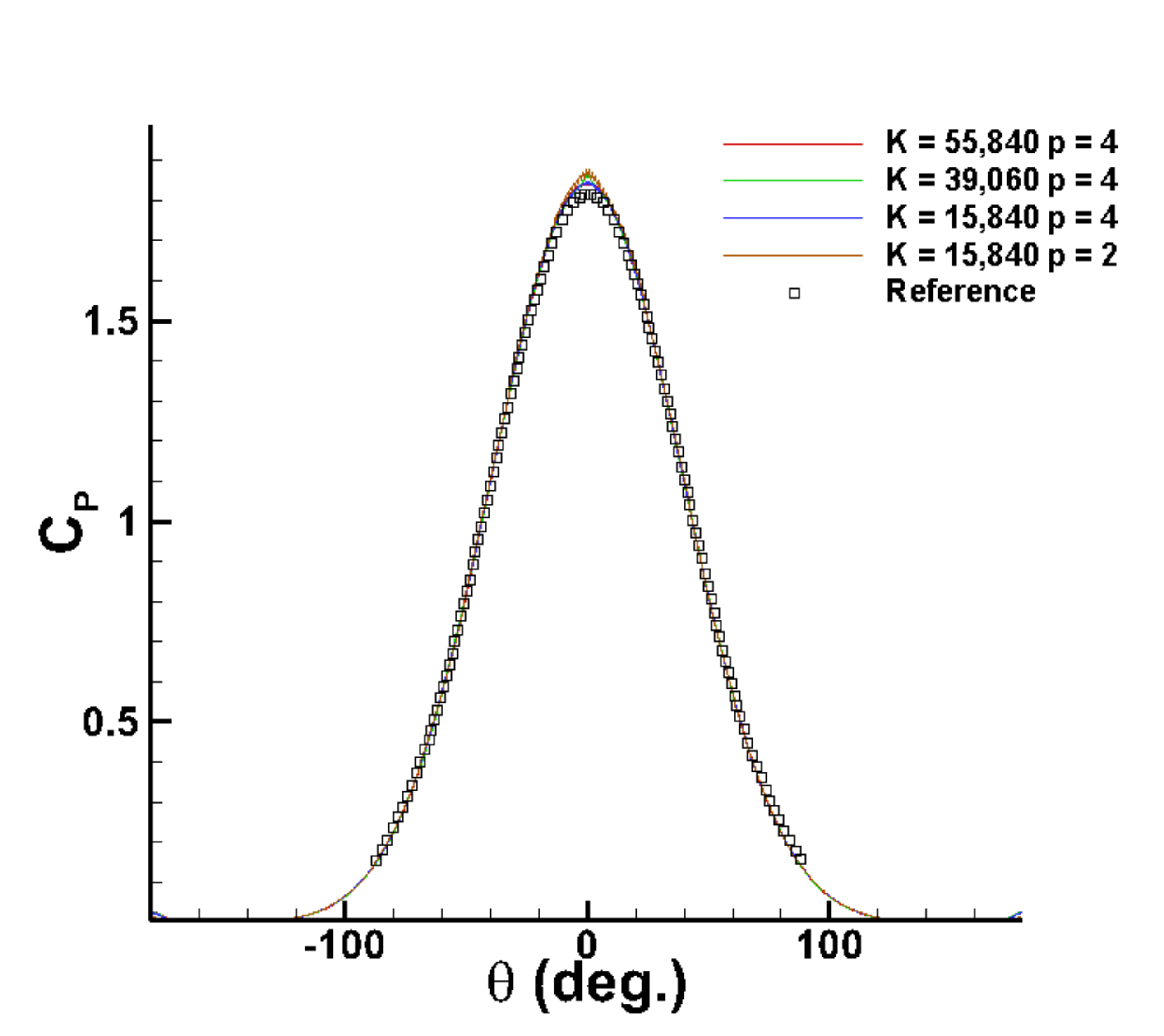} 
		\caption{}
	\end{subfigure}
	\begin{subfigure}{0.5\textwidth}
		\includegraphics[width=0.9\linewidth]{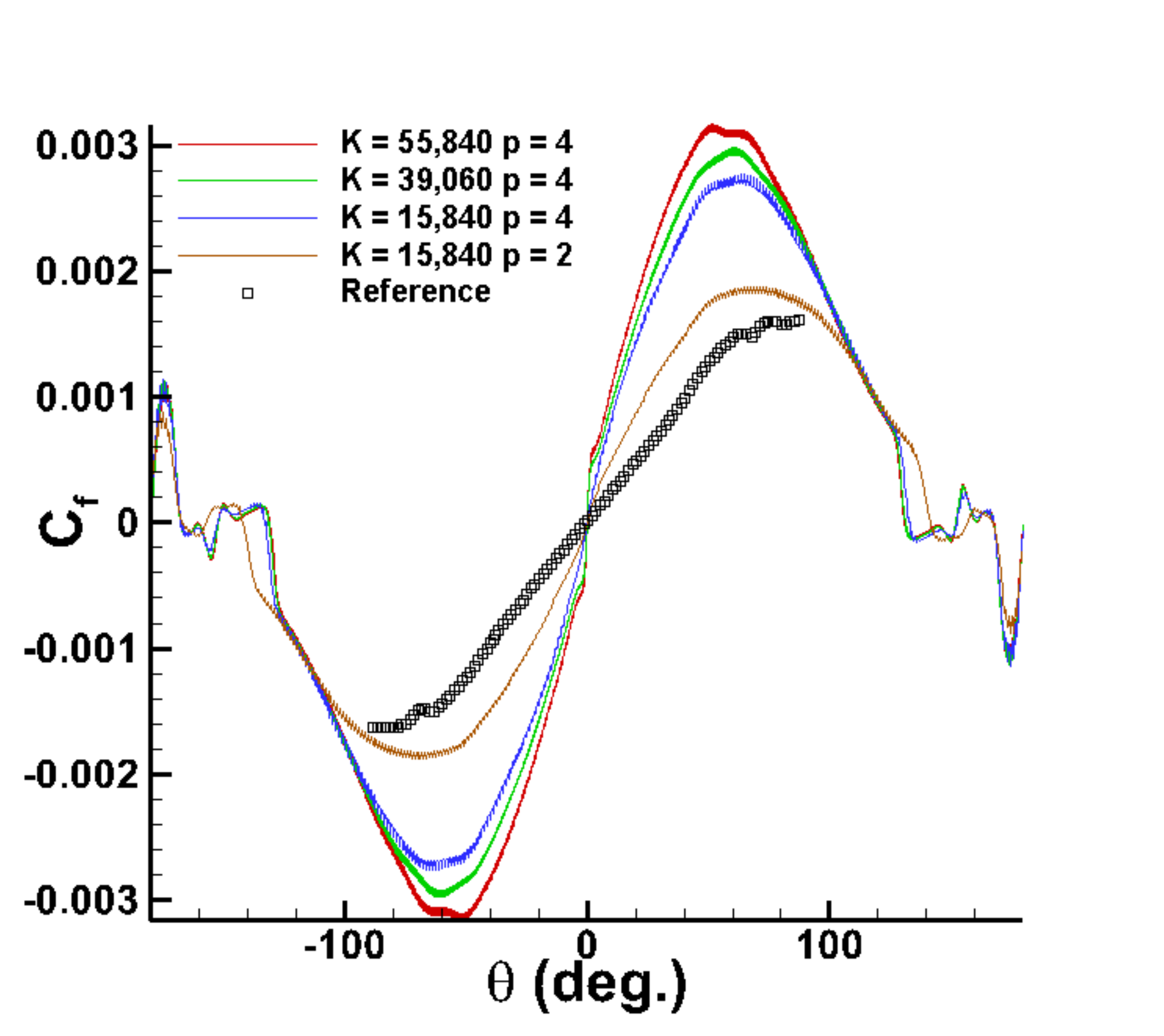}
		\caption{}
	\end{subfigure}
	 \caption{ Time-averaged wall pressure (left) and skin friction (right) coefficients obtained with the present 1st-order positivity-preserving entropy stable scheme and fourth-order HDG method \cite{fernandez2018physics} for the hypersonic cylinder flow. }
\label{skin_friction_pressure_Hcylinder}
\end{figure}

\subsection{$2$-D hypersonic cylinder}
%
%
The next test problem is the hypersonic flow around a two-dimensional adiabatic cylinder of diameter 1. The flow parameters are the same used in \cite{fernandez2018physics}:  $Re = 376,930$, $Ma = 17.605$, and $Pr=0.71$.     
Initially, the flow is uniform with $\rho = 1$, $T = 1$, and 
$\vec{\bfnc{V}} = \left[1, 0, 0\right]^\top$.    
The discretely entropy stable, adiabatic no-slip wall boundary conditions developed in \citep{SOLIDWALL2019} are used at the cylinder surface.
Three grids, coarse ($15,840$ elements), medium ($39,060$ elements), and fine ($55,260$ elements ), are considered for this problem.
The grid is stretched in the radial direction and its  resolution near the cylinder wall is $\Delta r = 2.67\cdot 10^{-3}$,  $\Delta r = 2\cdot 10^{-3}$ and $\Delta r = 1.33\cdot 10^{-3}$ for the coarse, medium, and fine grids, respectively.  The grid is uniform in the circumferential direction and has 288, 560, and 720 radial lines for the coarse, medium, and fine grids, respectively.  
%
%
\begin{figure}[!h] 
	\begin{subfigure}{0.5\textwidth}
		\includegraphics[width=0.9\linewidth]{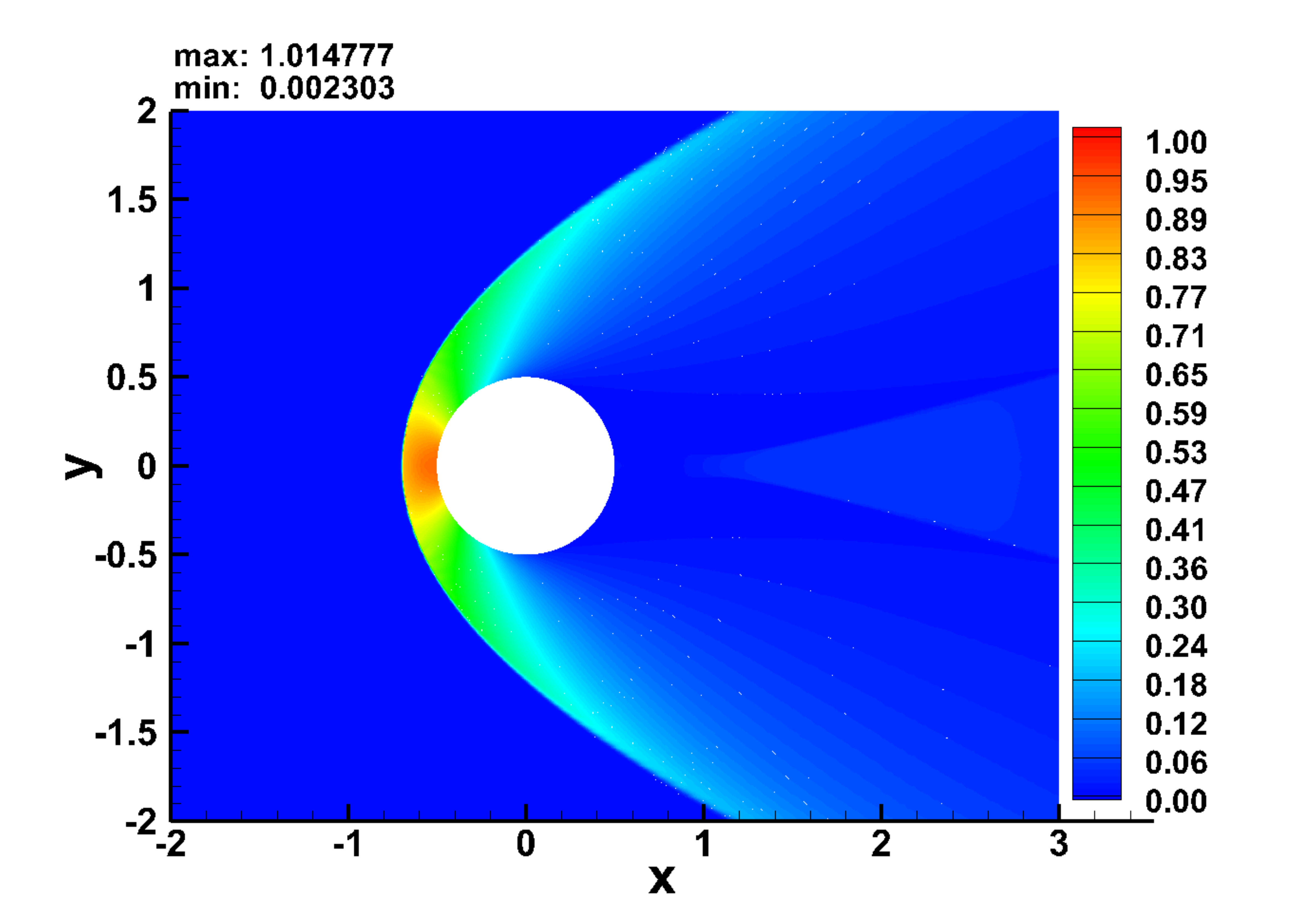} 
		\caption{}
	\end{subfigure}
	\begin{subfigure}{0.5\textwidth}
		\includegraphics[width=0.9\linewidth]{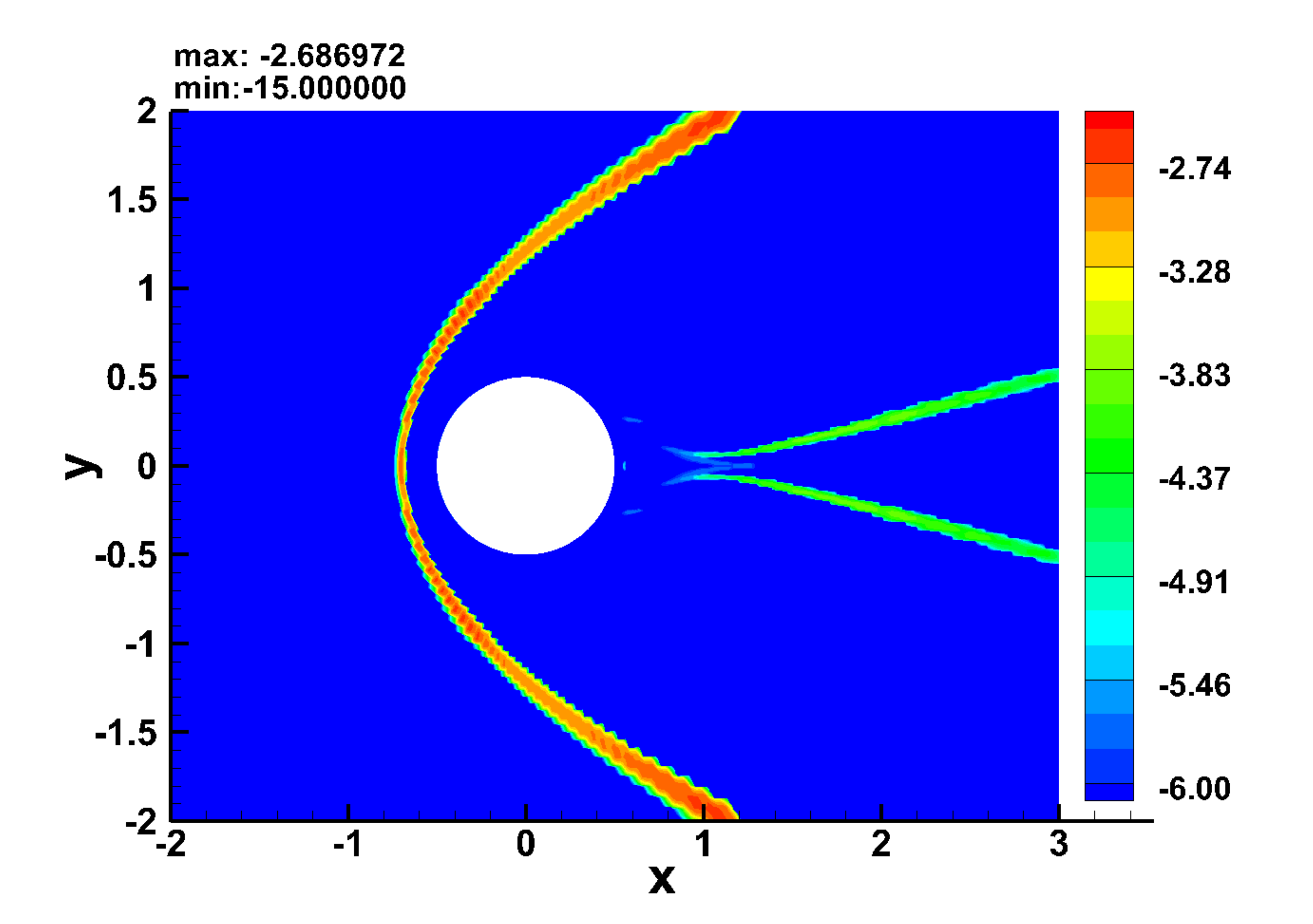}
		\caption{}
	\end{subfigure}
	 \caption{Density (left) and artificial viscosity ($\log_{10}$) are shown for the $p=4$ fine grid solution of the hypersonic cylinder problem at $t=20$. }
\label{densPressVortMach_Hcylinder}
\end{figure} 
Figure~\ref{skin_friction_pressure_Hcylinder} shows the comparison of time-averaged wall pressure and skin-fiction coefficients obtained with the new first-order  positivity-preserving entropy stable scheme and  the fourth-order hybridized discontinuous Galerkin (HDG) method developed in \cite{fernandez2018physics}.
Note that the grid used in \cite{fernandez2018physics} has only $16,000$ elements, which is comparable with the coarse grid used in the present analysis.  
The pressure coefficients computed with both schemes agree very well for all grids considered. 
Note, however, that the agreement between the skin-friction coefficients is less satisfactory.
Based on our numerical results, the skin friction increases as the grid is refined, as one can see in Fig.~\ref{skin_friction_pressure_Hcylinder}.
This discrepancy in the skin-friction coefficients can be attributed to the lack of the grid resolution in the boundary layer provided by the $16,000$-element grid used in \cite{fernandez2018physics}.
Among other possible factors that may have a negative impact on the accuracy of the skin friction coefficient are excessive artificial dissipation added in the boundary layer region or a significantly different time-averaging window used in \cite{fernandez2018physics}.

\subsection{$2$-D shock diffraction}
The last test problem is the diffraction of a rightward moving shock of Mach number $200$ over a backward facing corner for both inviscid and viscous flow regimes.  It is well known that numerical schemes are prone to producing negative densities and pressures in simulation of strong shocks diffracting over sharp corners; hence, this test problem is very well suited for evaluating the robustness of the proposed scheme.
\begin{figure}[!h] 
	\begin{subfigure}{0.5\textwidth}
		\includegraphics[width=0.9\linewidth]{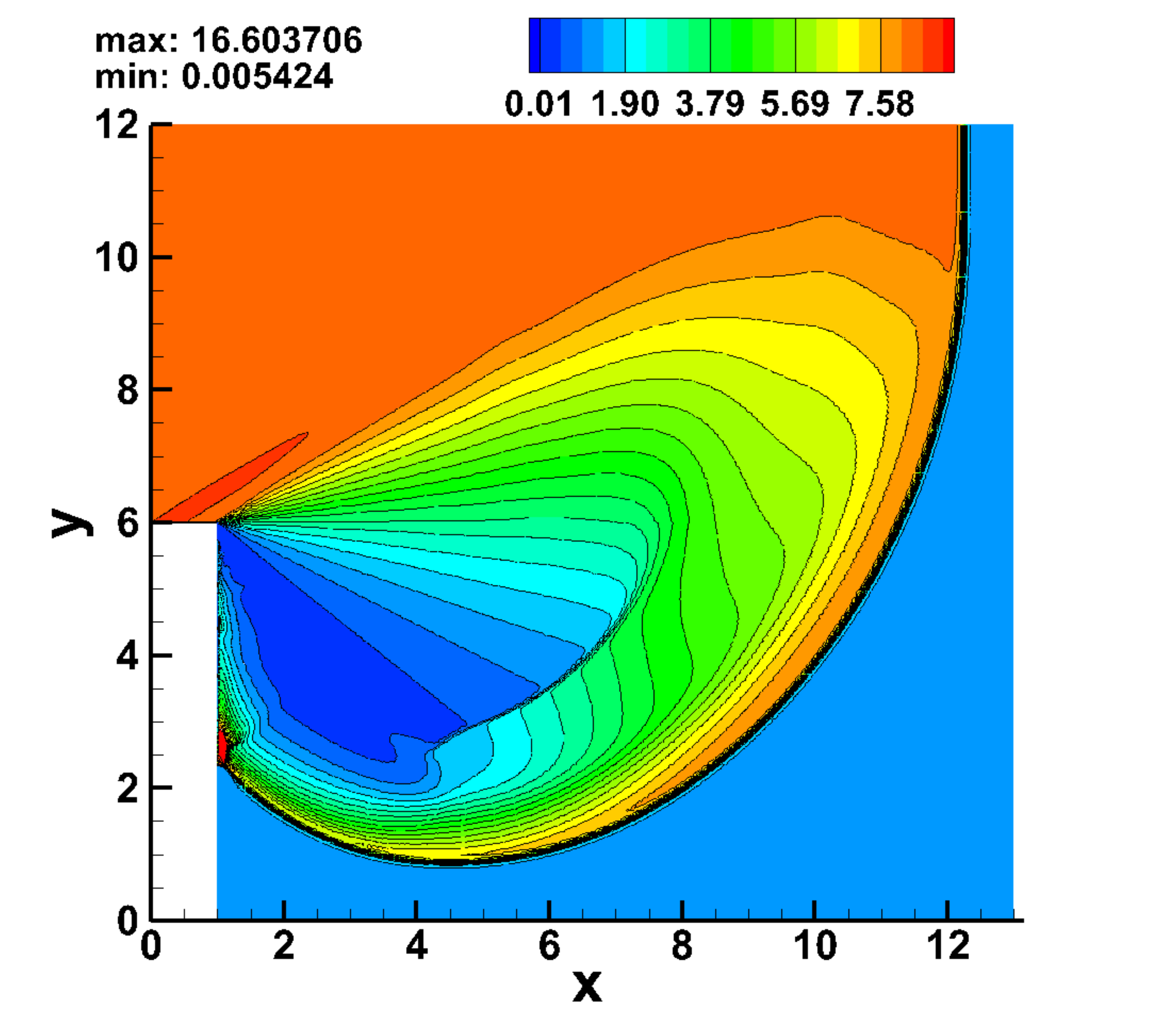} 
		\caption{}
	\end{subfigure}
	\begin{subfigure}{0.5\textwidth}
		\includegraphics[width=0.9\linewidth]{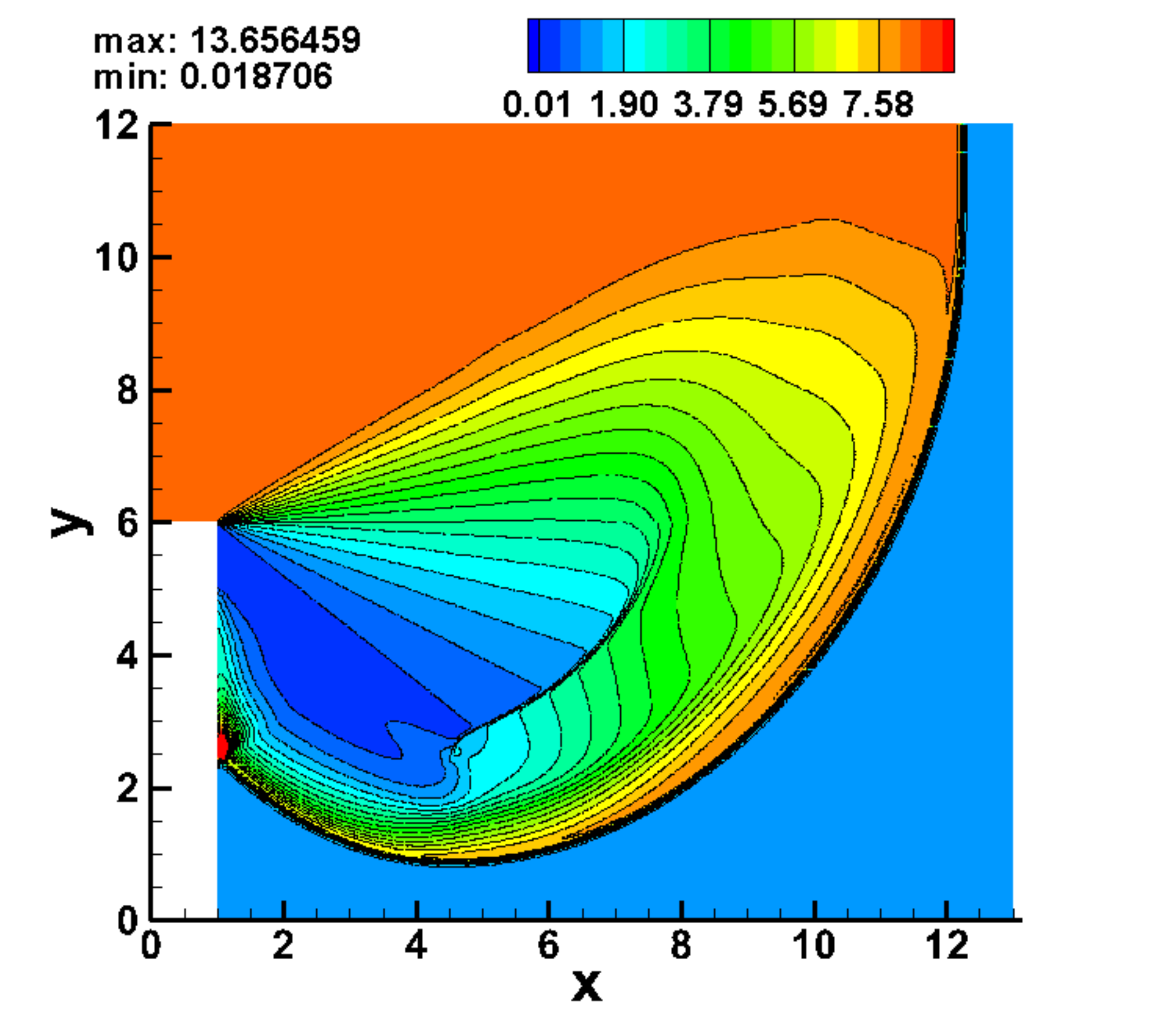}
		\caption{}
	\end{subfigure}
	\\
	\begin{subfigure}{0.5\textwidth}
		\includegraphics[width=0.9\linewidth]{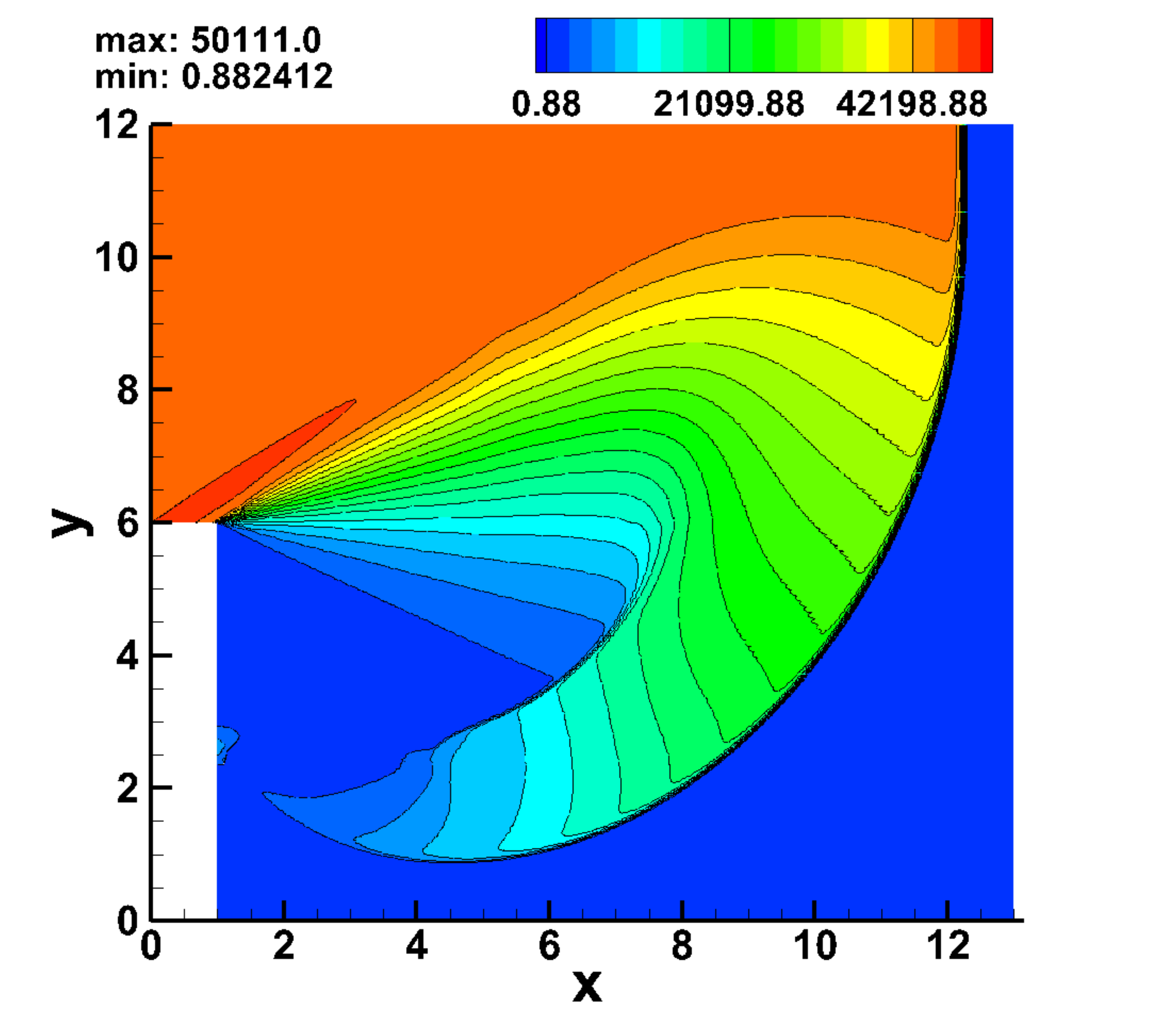} 
		\caption{}
	\end{subfigure}
	\begin{subfigure}{0.5\textwidth}
		\includegraphics[width=0.9\linewidth]{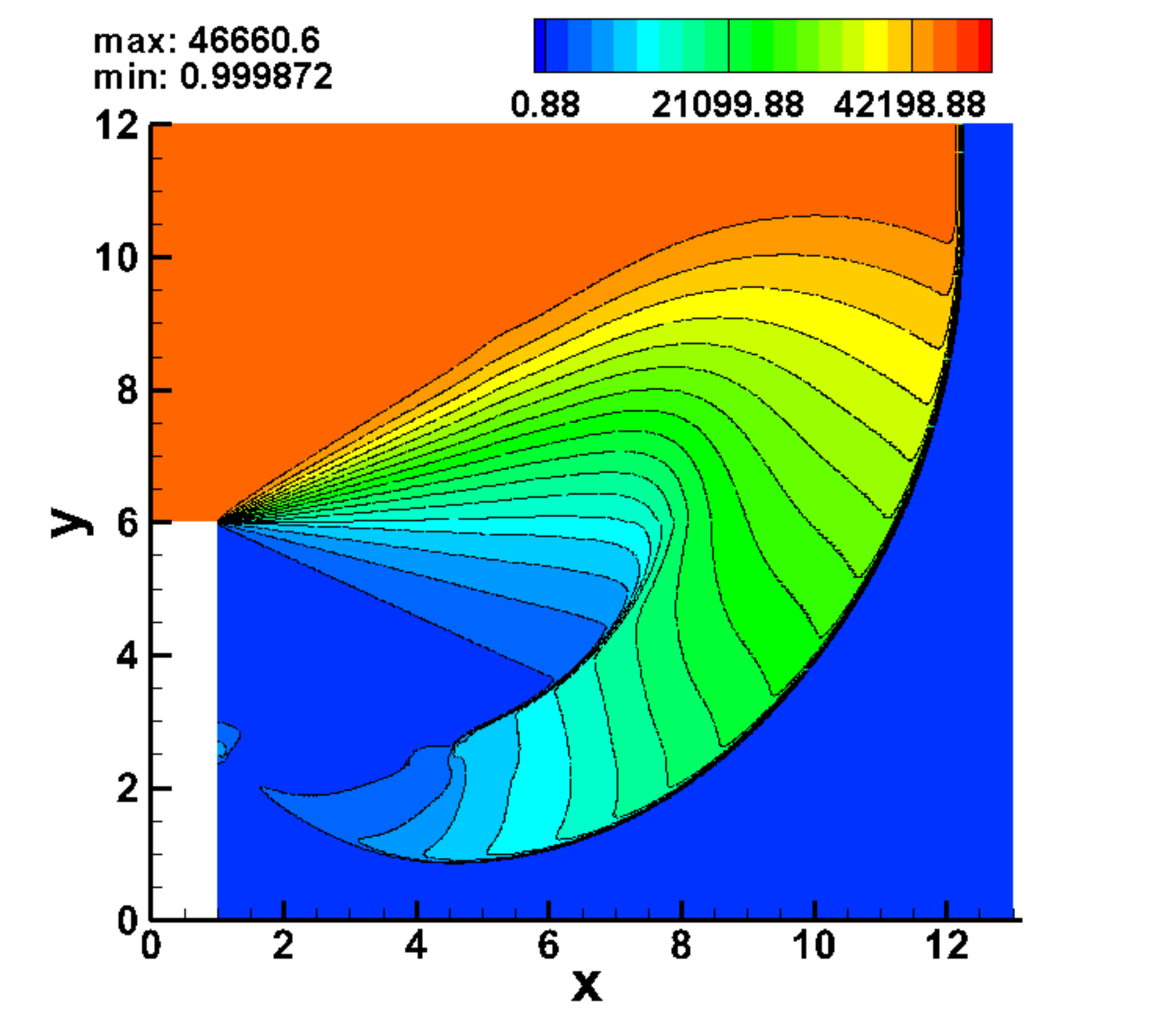}
		\caption{}
	\end{subfigure}
	 \caption{Density (top row) and  pressure (bottom row) contours obtained with the first-order positivity-preserving scheme for the viscous (left) and inviscid (right) shock diffraction flows at $Ma=200$.} 
\label{densityPres_shockDiffraction_M200}
\end{figure}

%
%
\begin{figure}[!h] 
	\begin{subfigure}{0.5\textwidth}
		\includegraphics[width=0.9\linewidth]{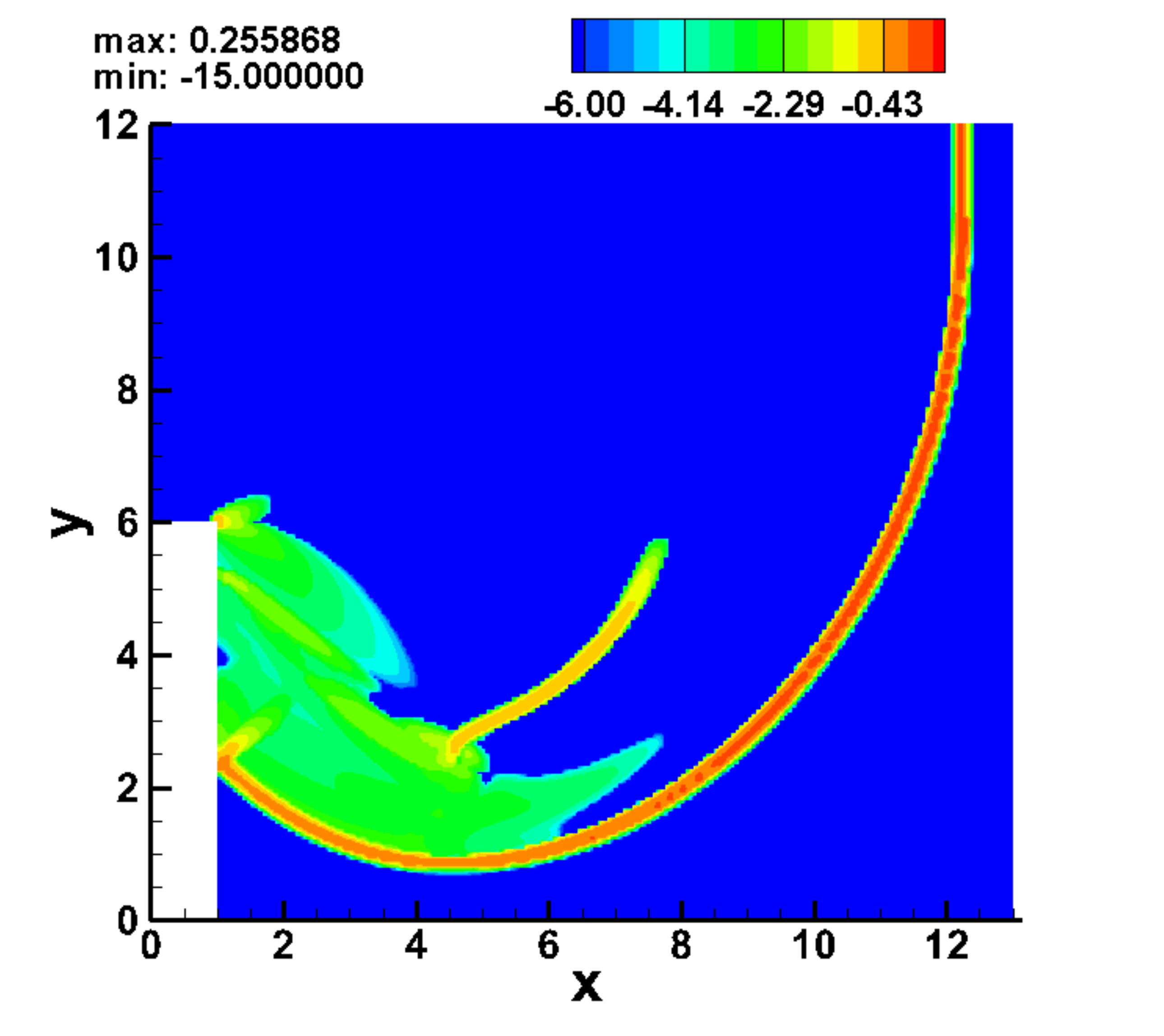} 
		\caption{}
	\end{subfigure}
	\begin{subfigure}{0.5\textwidth}
		\includegraphics[width=0.9\linewidth]{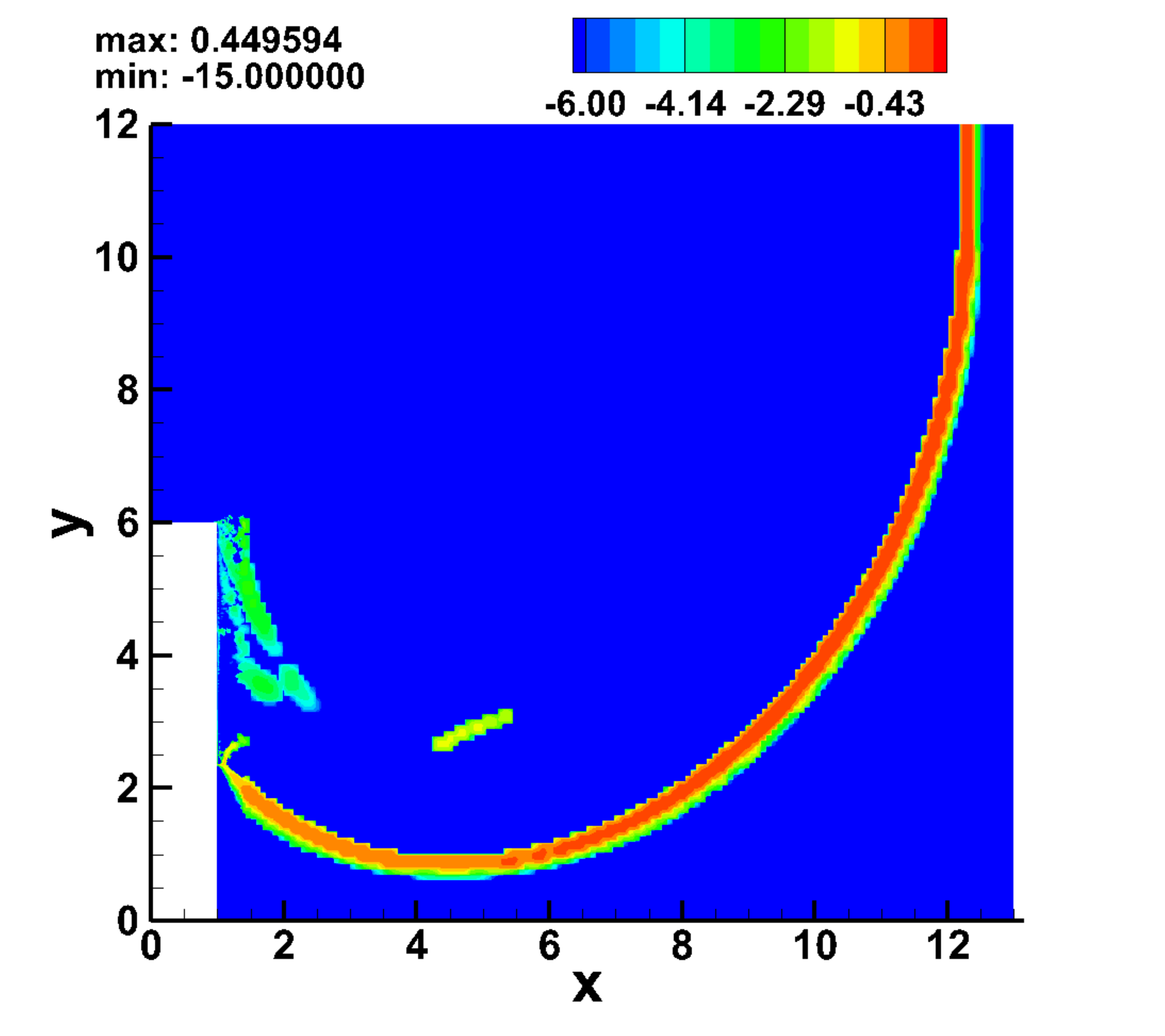}
		\caption{}
	\end{subfigure}
	 \caption{Artificial viscosity ($\log_{10}$) of the inviscid (left) and viscous (right) shock diffraction problem.  } 
\label{logmuad_shockDiffraction_M200}
\end{figure}

In contrast to the results presented in \cite{Zhang}, we use the entropy stable adiabatic no-slip boundary conditions at the corner wall \citep{SOLIDWALL2019}.
Initially, the rightward moving shock of Mach number 200 is located at $x = 0.5$.  On the downstream side of the shock, the initial conditions are   
$\rho = 1.4$, $\fnc{P} = 1$, and $\vec{\bfnc{V}} = 0$.  The solution upstream of the shock is determined by using the Rankine–Hugoniot conditions and the given shock speed.  For the viscous flow, we use the Blasius boundary layer solution on the upstream side of the shock with the freestream conditions corresponding to the Mach number of 200.  The governing equations are integrated until $t=5.8535\cdot 10^{-2}$.  For the viscous flow, the Sutherland's law is used and the Prandtl and Reynolds numbers are set equal to $0.75$ and $10^4$, respectively.
%
%
A uniform rectangular mesh with constant grid spacings $\Delta x = \Delta y$ and $40,000$ elements is used for the inviscid flow simulation. 
For the viscous flow case, the grid consists of $52,944$ elements and is clustered near the corner surface,  so that the normal grid spacing at the wall is $2.67\times 10^{-3}$.

Density, pressure, and artificial viscosity contours for the inviscid and viscous flows at the final time are presented in Figures~\ref{densityPres_shockDiffraction_M200} and \ref{logmuad_shockDiffraction_M200}.  
As follows from these results, the new first-order positivity-preserving entropy stable scheme captures  both the weak and strong shocks as well as the contact discontinuity within one grid element practically without producing spurious oscillations.
It should also be noted that the artificial viscosity coefficient is about 2 orders of magnitude smaller at the contact discontinuity than at the shock, thus indicating that the proposed physics-based artificial dissipation method is capable of distinguishing shocks from contact discontinuities.  

\bigskip  
{\bf Acknowledgments}
The first author was supported by the Virginia Space Grant Consortium
Graduate STEM Research Fellowship and the Science, Mathematics
and Research for Transformation (SMART) Scholarship. The second author
acknowledges the support from Army Research Office through grant
W911NF-17-0443.


\end{document}